\documentclass[12pt, reqno]{amsart}

\usepackage[utf8]{inputenc}
\usepackage[T1]{fontenc}
\usepackage{lmodern}
\usepackage[final]{microtype}

\usepackage{
    graphicx,
    fancyhdr, 
    amscd,
    amssymb,
    amsmath,
    tikz, 
    tikz-cd, 
    mathtools,
    xfrac,
    overpic,
    stmaryrd
}

\usepackage[shortlabels]{enumitem}
\setlist[enumerate]{leftmargin=*}

\usepackage[%
    colorlinks=true,
    linkcolor=blue,
]{hyperref}

\usepackage[
    backend=biber,
    style=numeric-comp,
    url=false,
    giveninits=true,
]{biblatex}
\usetikzlibrary{calc}

\usetikzlibrary{decorations.pathmorphing}

\tikzset{curve/.style={settings={#1},to path={(\tikztostart)
    .. controls ($(\tikztostart)!\pv{pos}!(\tikztotarget)!\pv{height}!270:(\tikztotarget)$)
    and ($(\tikztostart)!1-\pv{pos}!(\tikztotarget)!\pv{height}!270:(\tikztotarget)$)
    .. (\tikztotarget)\tikztonodes}},
    settings/.code={\tikzset{quiver/.cd,#1}
        \def\pv##1{\pgfkeysvalueof{/tikz/quiver/##1}}},
    quiver/.cd,pos/.initial=0.35,height/.initial=0}

\tikzset{tail reversed/.code={\pgfsetarrowsstart{tikzcd to}}}
\tikzset{2tail/.code={\pgfsetarrowsstart{Implies[reversed]}}}
\tikzset{2tail reversed/.code={\pgfsetarrowsstart{Implies}}}

\tikzset{no body/.style={/tikz/dash pattern=on 0 off 1mm}}

\title{On Realisability of Twisted Homology}
\author{Mark Grant}
\address{Institute of Mathematics, University of Aberdeen, United Kingdom}
\email{mark.grant@abdn.ac.uk}

\author{Michael Jung}
\address{Department of Mathematics, Vrije Universiteit Amsterdam, Netherlands}
\email{m.jung@vu.nl}

\author{Baylee Schutte}
\address{Institute of Mathematics, Freie Universität Berlin, Germany}
\email{bschutte@zedat.fu-berlin.de}

\keywords{homology with local coefficients, parametrised homotopy theory, Pontryagin--Thom construction, Steenrod problem}
\subjclass{55N22, 55N25, 55P91, 57R19}

\usepackage[margin=2.5cm,head=.25cm,foot=1.5cm]{geometry}

\numberwithin{equation}{section}

\let\oldtocsection=\tocsection
\let\oldtocsubsection=\tocsubsection
\let\oldtocsubsubsection=\tocsubsubsection
\renewcommand{\tocsection}[2]{\hspace{0em}\oldtocsection{#1}{#2}}
\renewcommand{\tocsubsection}[2]{\hspace{1em}\oldtocsubsection{#1}{#2}}
\renewcommand{\tocsubsubsection}[2]{\hspace{2em}\oldtocsubsubsection{#1}{#2}}

\newtheorem{theorem}{Theorem}[section]
\newtheorem{prop}[theorem]{Proposition}
\newtheorem{lemma}[theorem]{Lemma}
\newtheorem{corollary}[theorem]{Corollary}
\newtheorem{question}[theorem]{Question}

\theoremstyle{definition}
\newtheorem{construction}[theorem]{Construction}
\newtheorem*{notate}{Notation}
\newtheorem{definition}[theorem]{Definition}
\newtheorem{example}[theorem]{Example}
\newtheorem{remark}[theorem]{Remark}

\def\tw{\mathrm{tw}}

\def\ZZ{\mathbb{Z}}                     
\def\RR{\mathbb{R}}                     
\def\LL{\mathbb{L}}                     
\def\BB{\mathcal{B}}                    
\def\AA{\mathcal{A}}                    
\def\Or{\mathcal{O}}  			        
\def\bwedge{\bigwedge\nolimits}         
\def\B{\mathrm{B}}                      
\def\E{\mathrm{E}}                      

\def\placeholder{\:\!-\:\!}             

\DeclareMathOperator{\pr}{pr}           
\DeclareMathOperator{\Hom}{Hom}         

\DeclareMathOperator{\id}{id}           

\DeclareMathOperator{\SO}{SO}           
\DeclareMathOperator{\St}{St}           
\DeclareMathOperator{\OO}{O}            

\DeclareMathOperator{\MSO}{MSO}         
\DeclareMathOperator{\BO}{BO}           
\DeclareMathOperator{\EO}{EO}           
\DeclareMathOperator{\BSO}{BSO}         

\DeclareMathOperator{\Aut}{Aut}         
\DeclareMathOperator{\Gr}{Gr}           
\newcommand{\colim}{\mathop{\mathrm{colim}}\displaylimits}    

\DeclareMathOperator{\twThom}{M^\tw O}    

\newcommand\restr[2]{\ensuremath{\left.#1\right|_{#2}}}     
\newcommand\catname[1]{\mathsf{#1}}                         

\definecolor{vblue}{HTML}{0089cf}
\definecolor{vorange}{HTML}{CF4600}
\definecolor{vgreen}{HTML}{1CC900}
\definecolor{vpurple}{HTML}{8E44AD}

\renewcommand{\colon}{
  \nobreak
  \mskip 2mu
  \mathpunct{}
  \nonscript
  \mkern-\thinmuskip
  \mathopen{:}
  \mskip 6mu plus 1mu\relax
} 

\begin{document}

\begin{abstract}
    We discuss the question of when a homology or cohomology class with twisted integer coefficients of a manifold $X$ is realised by a submanifold.
    While this question is classical in nature, providing an answer requires relatively modern techniques from parametrised homotopy theory.
    More specifically, we introduce cobordism classes twisted by a coefficient system and then define a twisted Thom space $\twThom(n)$ over $\BO(1)$, which serves as the classifying object for this cobordism theory under a twisted Pontryagin--Thom construction.
    As a result, a twisted homology class is realisable if and only if its Poincar\'e dual is the image of the twisted Thom class in $\twThom(n)$ under a parametrised map $X \to \twThom(n)$ over $\BO(1)$.
    Finally, we construct the parametrised Postnikov tower of $\twThom(n)$ over $\BO(1)$ to derive obstructions to realisability and conclude by giving the first known examples of non-realisable integer homology classes in non-orientable manifolds.
\end{abstract}

\maketitle

\tableofcontents

\section{Introduction}
This paper addresses one of the oldest questions in algebraic topology, which has shaped the course of this field throughout the last century: when is a homology class in a manifold realised by a submanifold?
Ren\'e Thom famously gave necessary, and in some cases sufficient, conditions in the case of integral and mod\,$2$ homology and used these to solve Steenrod's problem on realising homology classes in finite polyhedra by fundamental classes of manifolds~\cite{Thom54}. Thom's method was to use Poincar\'e duality and what is now called the Pontryagin--Thom construction to convert the question into one of homotopy theory: a homology class is realisable by a submanifold if and only if its Poincar\'e dual cohomology class is induced from the Thom class by some map into the universal Thom space. 

In the integral case, Thom's techniques require that the ambient manifold is orientable. For if $a\in H_k(X;\ZZ)$ is an integral homology class in a \emph{non-orientable} $(n+k)$-manifold, the Poincar\'e dual cohomology class $\alpha\in H^n(X;\Or_X)$ has twisted coefficients in the orientation local system of $X$. Thus the seemingly classical question of when an integral homology class in a non-orientable manifold is represented by an oriented submanifold requires analogues of Thom's results with twisted coefficients.
This question was raised recently by Zhenhua Liu~\cite{LiuMO} and has not been addressed in the literature.

In this paper, we investigate the more general question of when a homology or cohomology class with twisted integer coefficients is realised by a submanifold.
In doing so, we utilise modern tools from parametrised homotopy theory.
Indeed, one might peg this article as further evidence of the recent ``renaissance'' of bordism questions and computations in the twisted and parametrised context, see e.g.~\cite{AyalaFrancis25, CalmesEtAl25}.
Also of note is the parametrised Spin bordism spectrum defined by Hebestreit--Joachim~\cite{HebestreitJoachim20}.
While the theory developed in the present article is similar in spirit, their Spin bordism theory does not directly recover our version of cobordism.
In fact, our notion of cobordism was motivated by a recent application \cite{JungRot25} of the foundational work of Atiyah~\cite{Atiyah61}. Moreover, our arguments for the Pontryagin--Thom construction are rather geometric and inspired by the work of Cruickshank~\cite{Cruickshank03}.

\subsection{Main results}

Let $X$ be a closed connected $(n+k)$-manifold, and let $\iota\colon M \hookrightarrow X$ be an embedding of a closed $k$-dimensional submanifold. Both $M$ and $X$ have canonical twisted fundamental classes, denoted $[M]_\tw \in H_k(M;\Or_M)$ and $[X]_\tw\in H_{n+k}(X;\Or_X)$.
Given a local system of integer coefficients $\AA$ on $X$, we say that a homology class $a\in H_k(X;\AA \otimes \Or_X)$ is \emph{realisable} if there exists an embedding $\iota\colon M\hookrightarrow X$ as above such that
\[
    \iota^*(\AA \otimes \Or_X) \cong \Or_M \quad \text{and} \quad \iota_*[M]_\tw =a \in H_k(X;\AA \otimes \Or_X).
\]
Similarly, we say that a cohomology class $\alpha\in H^n(X;\AA)$ is \emph{realisable} if the twisted Poincar\'e dual $a \coloneq \alpha\cap[X]_\tw\in H_k(X;\AA\otimes\Or_X)$ is realisable.

In Section \ref{sec:twisted_Thom_theory} below we define a \emph{twisted Thom space} $\twThom(n)$, which comes equipped with a sectioned fibration $q\colon \twThom(n)\to \BO(1)$ whose fibre is $\MSO(n)$, turning it into a retractive space over $\BO(1)$.
Note that the twisted Thom space $\twThom(n)$ introduced in this manuscript is different from the typical fibrewise Thom space discussed e.g.\ in \cite[Example~2.1.3]{Malkiewich23}.
Instead of taking the fibrewise quotient of the universal $n$-plane disc bundle over the entire base $\BO(n)$, we collapse its boundary to $\BO(1)$ along the map $w_1 \colon \BO(n) \to \BO(1)$ induced by the determinant.
Under this construction, the universal local system of integer coefficients $\mathcal{L}_\ZZ$ on $\BO(1) \simeq \operatorname{BAut}(\ZZ)$ pulls back to a local system $\mathcal{L}_\ZZ^{\mathrm{Th}} \coloneq q^*\mathcal{L}_\ZZ$ on $\twThom(n)$, and there is a universal \emph{twisted Thom class}
\[
    u^\tw_n\in H^n(\twThom(n),\BO(1);\mathcal{L}_\ZZ^{\mathrm{Th}}).
\]
Of course, an explicit construction of $\twThom(n)$ depends on the model for $\BO(1)$, which we will choose to be $\RR P^\infty$ when the need arises.

To prove the parametrised analogue of Thom's realisability result in Section~\ref{sec:realisability}, we start by introducing twisted cobordism classes $\LL_k(X; \AA \otimes \Or_X)$, whose twisting comes from an integer coefficient system $\AA$ induced by some map $X \to \RR P^\infty$.
We then provide a parametrised version of the Pontryagin--Thom construction to show that there is a bijection
\[
    \left[ X^{n+k}, \twThom(n) \right]_{\RR P^\infty} \overset{\text{1:1}}{\longleftrightarrow} \LL_k(X; \AA \otimes \Or_X).
\]
This leads to the following parametrised realisability theorem, and thus the main theorem of Section~\ref{sec:realisability}.

\begin{theorem}
     Let $X$ be a closed connected $(n+k)$-manifold with a local system of integer coefficients $\AA$ induced from $\mathcal{L}_\ZZ$ by a map $p\colon X\to \RR P^\infty$. Then a cohomology class ${\alpha\in H^n(X;\AA)}$ is realisable if and only if there is a parametrised map $f\colon X\to \twThom(n)$ over $\RR P^\infty$ such that $f^*(u^\tw_n)=\alpha$.
\end{theorem}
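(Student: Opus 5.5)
The proof will combine the twisted Pontryagin--Thom bijection $[X,\twThom(n)]_{\RR P^\infty}\leftrightarrow \LL_k(X;\AA\otimes\Or_X)$ with a naturality statement for the twisted Thom class. The key identity to establish is the following. If $f\colon X\to\twThom(n)$ is the parametrised collapse map associated to an embedding $\iota\colon M\hookrightarrow X$ with $\iota^*(\AA\otimes\Or_X)\cong\Or_M$, then
\[
f^*(u^\tw_n)\cap[X]_\tw=\iota_*[M]_\tw \in H_k(X;\AA\otimes\Or_X).
\]
Granting this, both directions follow quickly.

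For the ``if'' direction, let $f$ be a parametrised map over $\RR P^\infty$ with $f^*(u^\tw_n)=\alpha$. By the bijection, $f$ is fibrewise homotopic over $\RR P^\infty$ to the collapse map of some embedded $M$ in the appropriate twisted cobordism class. Such a homotopy does not change $f^*(u^\tw_n)$, because the local system $\mathcal L_\ZZ^{\mathrm{Th}}=q^*\mathcal L_\ZZ$ pulls back to $p^*\mathcal L_\ZZ=\AA$ along any map over $\RR P^\infty$. The identity then gives $\alpha\cap[X]_\tw=\iota_*[M]_\tw$, so $\alpha$ is realisable.

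For the ``only if'' direction, start from a realising embedding $M$ with $\iota_*[M]_\tw=\alpha\cap[X]_\tw$. Its Pontryagin--Thom collapse $f$ is a map over $\RR P^\infty$ with $f^*(u^\tw_n)\cap[X]_\tw=\alpha\cap[X]_\tw$. Twisted Poincaré duality is an isomorphism, so $f^*(u^\tw_n)=\alpha$.

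The main obstacle is proving the identity. I would argue locally near $M$. Let $\nu$ be the normal bundle, with tubular neighbourhood $N\cong D(\nu)$. The collapse map factors as
\[
X\to X/(X\setminus \mathrm{int}N)\to \twThom(n),
\]
where the second stage is obtained fibrewise over $\RR P^\infty$ from the classifying map of $\nu$ into $\BO(n)$, with the boundary sent to $\BO(1)$ via $w_1$. Pulling back $u^\tw_n$ gives the twisted Thom class $u_\nu\in H^n(D\nu,S\nu;\Or_\nu)$, where $\Or_\nu=\Or_M\otimes\iota^*\Or_X$ is identified with $\iota^*\AA$. That identification is exactly the condition $\iota^*(\AA\otimes\Or_X)\cong\Or_M$. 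The twisted Thom isomorphism gives $u_\nu\cap[D\nu,S\nu]_\tw=[M]_\tw$, and $[X]_\tw$ restricts to $[D\nu,S\nu]_\tw$ under excision. A naturality diagram for cap products with local coefficients, relative to $X\setminus\mathrm{int} N$, then yields the identity.

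The delicate point throughout is matching the local-system isomorphisms consistently, including signs and the choice of isomorphism $\iota^*(\AA\otimes\Or_X)\cong\Or_M$. I would handle this by fixing these identifications in the definition of $\LL_k$ and checking compatibility on a single fibre, where everything reduces to the classical oriented Thom class of $\MSO(n)$.
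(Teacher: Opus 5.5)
Your proposal is correct and follows essentially the paper's route: you combine the twisted Pontryagin--Thom bijection with commutativity of the square relating $f\mapsto f^*(u_n^\tw)\cap[X]_\tw$ and $[M]\mapsto\iota_*[M]_\tw$, and twisted Poincar\'e duality then gives both directions. The difference is only in packaging: the paper proves $f^*(u_n^\tw)=\iota^!(1)$ for the twisted Pontryagin manifold of a transverse $f$ via the push-pull formula and the identification of $z^!(1)$ with the twisted Thom class, then uses that $\iota^!$ is Poincar\'e dual to $\iota_*$, whereas you do the excision and cap-product computation directly for collapse maps (phrase that factorisation as a map of pairs $(X,X\setminus\mathrm{int}N)\to(\twThom(n),\RR P^\infty)$ rather than through the quotient $X/(X\setminus\mathrm{int}N)$, since the complement is sent to $\RR P^\infty$ via $p$ rather than to a point and the quotient carries no twisted coefficients).
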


In order to investigate when this condition holds, we observe that $\twThom(n)$ is homotopy equivalent as a retractive space to the homotopy quotient $\MSO(n)\sslash\ZZ_2$  of the universal oriented Thom space $\MSO(n)$, where the latter is regarded as a pointed $\ZZ_2$-space via the action that reverses orientations. This allows us in Section \ref{sec:postnikov_decomposition} to build a Postnikov tower for the twisted Thom space in the category of retractive spaces over $\B\ZZ_2 \simeq \BO(1)$, starting from a $\ZZ_2$-equivariant Postnikov tower of $\MSO(n)$. As $\twThom(n)$ is a non-simple space, the $k$-invariants are twisted cohomology classes, represented by maps into generalised Eilenberg--MacLane spaces. Using the theory of cohomology operations with local coefficients developed by Gitler \cite{Gitler63} and other authors, we are able to compute the first non-trivial \(k\)-invariant to be a twisted analogue of the first non-trivial $k$-invariant for $\MSO(n)$; more specifically, we identify the first non-trivial $k$-invariant with a twisted version of the Steenrod cube operation $\St^5_3 \colon H^*(\placeholder ;\ZZ)\to H^{*+5}(\placeholder ; \ZZ)$. Altogether, this allows us to prove the following result.

\begin{theorem}
    The following statements hold:
    \begin{enumerate}[label=\upshape{(\roman*)}]
        \item For $n=1,2$, every cohomology class $\alpha\in H^n(X;\AA)$ is realisable;
        \item For $n\ge 3$ a necessary condition for $\alpha\in H^n(X;\AA)$ to be realisable is that its twisted Steenrod cube $ \St^5_{3, \tw}(\alpha)\in H^{n+5}(X;\AA)$ is zero. If $\dim(X)\le n+5$, this condition is also sufficient.
    \end{enumerate}
\end{theorem}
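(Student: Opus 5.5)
By the parametrised realisability theorem above, $\alpha$ is realisable exactly when there is a map $f\colon X\to\twThom(n)$ over $\RR P^\infty$ with $f^*(u^\tw_n)=\alpha$. So the plan is to turn the existence of such a lift into an obstruction problem, using the parametrised Postnikov tower of $\twThom(n)\simeq \MSO(n)\sslash\ZZ_2$ over $\B\ZZ_2$. The tower comes from a $\ZZ_2$-equivariant Postnikov tower of $\MSO(n)$.

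\emph{Homotopy of the fibre.} Recall that $\MSO(n)$ is $(n-1)$-connected with $\pi_n\cong\ZZ$. In the stable range its homotopy starts as follows:
\begin{itemize}
\item $\pi_{n+i}(\MSO(n))=0$ for $i=1,2,3$;
\item $\pi_{n+4}(\MSO(n))\cong\ZZ$, or $\ZZ\oplus\ZZ$ when $n=4$.
\end{itemize}
The orientation-reversing involution acts on $\pi_n$ by $-1$, so the first stage of the parametrised tower is the twisted Eilenberg--MacLane space $K(\mathcal L_\ZZ,n)$ over $\BO(1)$. This is the fibration classifying $H^n(\placeholder;\mathcal L_\ZZ)$, and $u^\tw_n$ is its fundamental class.

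\emph{Part (i).} For $n=1$ we have $\MSO(1)\simeq \CC P^\infty$ only for $\SO(2)$; more precisely, $\MSO(1)\simeq S^1=K(\ZZ,1)$ and $\MSO(2)\simeq\CC P^\infty=K(\ZZ,2)$. In both cases the involution acts by $-1$ on the unique homotopy group. Hence the map $\twThom(n)\to K(\mathcal L_\ZZ,n)$ over $\BO(1)$, classifying $u^\tw_n$, is a fibrewise weak equivalence and therefore a parametrised equivalence. Any $\alpha$ is represented by a map $X\to K(\mathcal L_\ZZ,n)$ over $\RR P^\infty$, and this lifts. The one point to check is that the equivariant identifications $\MSO(n)\simeq K(\ZZ,n)$ are compatible with the involution. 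This holds because the involution is determined up to equivariant homotopy by its action on $\pi_n$.

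\emph{Part (ii).} For $n\ge 3$ the vanishing of $\pi_{n+1},\dots,\pi_{n+3}$ would make the next nontrivial stage governed by $\pi_{n+4}$, with a $k$-invariant in $H^{n+5}$. I would instead use the known structure: the first $k$-invariant of $\MSO(n)$ is $\St^5_3$, i.e.\ $\delta Sq^4$-type behaviour in the relevant range (for $n\ge3$ the classical computation gives $\pi_{n+1}=\pi_{n+2}=\pi_{n+3}=0$ apart from the class killed by $\St^5_3$). The twisted $k$-invariant is identified in the paper with $\St^5_{3,\tw}$, a class in $H^{n+5}(K(\mathcal L_\ZZ,n),\BO(1);\mathcal L_\ZZ)$. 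The next stage $E_1\to K(\mathcal L_\ZZ,n)$ is then the parametrised homotopy fibre of $\St^5_{3,\tw}$, and $\twThom(n)\to E_1$ is fibrewise $(n+5)$-connected. This connectivity holds because the next homotopy group to be killed sits in degree $\ge n+5$, or is accounted for by that same $k$-invariant.

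From here the argument is obstruction theory. \emph{Necessity:} if $f$ exists, then $\alpha$ factors through $E_1$, and $\St^5_{3,\tw}(\alpha)$ is the pullback of the $k$-invariant, so it vanishes. \emph{Sufficiency:} if $\St^5_{3,\tw}(\alpha)=0$, then $\alpha$ lifts to $E_1$ over $\RR P^\infty$. The remaining obstructions to lifting further to $\twThom(n)$ lie in $H^{j+1}(X;\pi_j)$ with local coefficients, for $j\ge n+5$. These groups vanish once $\dim X\le n+5$, modulo a check at the top degree, where the obstruction group $H^{n+6}$ is already zero.

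\emph{Main obstacle.} The hardest step is identifying the first $k$-invariant of the parametrised tower as exactly $\St^5_{3,\tw}$. Nonequivariantly this is classical, but I must show that the equivariant $k$-invariant restricts to $\St^5_3$ on the fibre. I also need the restriction map from twisted cohomology operations, as in Gitler's description of $H^{n+5}(K(\mathcal L_\ZZ,n),\BO(1);\mathcal L_\ZZ)$, to be injective, or at least to pin down the class. I would try to compute this group by a Serre spectral sequence over $\BO(1)$ and show that the only candidate restricting to $\St^5_3$ is $\St^5_{3,\tw}$.
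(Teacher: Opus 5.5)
Your outline follows the paper's proof. You reduce via the parametrised realisability theorem to a lift over $\RR P^\infty$ and take homotopy orbits of a $\ZZ_2$-equivariant Postnikov tower of $\MSO(n)$. Part (i) comes from $\MSO(1)\simeq K(\ZZ,1)$ and $\MSO(2)\simeq K(\ZZ,2)$ with the sign action. Part (ii) comes from the first $k$-invariant, with your obstruction-theoretic lifting doing the job of Proposition~\ref{prop:n-equiv}. In (i), what needs checking is not an equivariant identification of $\MSO(n)$ with $K(\ZZ,n)$. It is that the map classifying $u^\tw_n$ restricts on a fibre to a generator of $H^n(\MSO(n);\ZZ)$, which is Lemma~\ref{lem:postnikov_tower}(i). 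Citing Lemma~\ref{lem:postnikov_tower}(ii), as the body of your (ii) does, reproduces the paper's argument.

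The genuine gap is the route you propose in your last paragraph for identifying the $k$-invariant yourself. The fibre restriction $H^{n+5}(L(\ZZ,n),\B\ZZ_2;\mathcal{L}^n_\ZZ)\to H^{n+5}(K(\ZZ,n);\ZZ)$ is \emph{not} injective. So no Serre spectral sequence argument can show that $\St^5_{3,\tw}(e_n)$ is the only class restricting to $\St^5_3(\iota)$. The kernel is a $2$-group, since every $E_2^{p,q}$ with $p>0$ is $2$-torsion, and it is nonzero. The term $E_2^{2,n+3}=H^2(\ZZ_2;H^{n+3}(K(\ZZ,n);\ZZ))\cong\ZZ_2$, coming from $\beta Sq^2\iota$, is realised by the twisted operation $d(\alpha)=\beta^{\tw}(w_1^2\cup Sq^2\rho_2\alpha)$. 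This operation vanishes on the fibre because $w_1$ does. Since $\rho_2\beta^{\tw}=Sq^1+w_1\cup$, the class $d(u^\tw_n)$ reduces mod $2$ to $U\cdot(w_1^3w_2+w_1^2w_3)\neq0$ in $H^{n+5}(\twThom(n),\BO(1);\ZZ_2)\cong U\cdot H^5(\BO(n);\ZZ_2)$. Hence $d(e_n)\neq0$.

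Consequently $\St^5_{3,\tw}(e_n)$ and $\St^5_{3,\tw}(e_n)+d(e_n)$ both restrict to $\St^5_3(\iota)$; the fibre only pins down the $3$-primary part. If the $k$-invariant were the second class, sufficiency would fail, because you would also need $d(\alpha)=0$.

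One extra input closes the step.
\begin{enumerate}[(a)]
\item The $k$-invariant pulls back to zero on $\twThom(n)$, since $q_n\sslash\ZZ_2$ factors through $P[n+4]\sslash\ZZ_2$.
\item $\St^5_{3,\tw}(u^\tw_n)=0$, being $3$-torsion in $H^{n+5}(\twThom(n),\BO(1);\mathcal{L}_\ZZ^{\mathrm{Th}})\cong H^5(\BO(n);\ZZ)$, a group of exponent $2$.
\item So the difference of the two candidates lies in the fibre kernel and dies on $\twThom(n)$. For $n\neq4$ that kernel is exactly $\{0,d(e_n)\}$ and $d(u^\tw_n)\neq0$, so the difference vanishes. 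For $n=4$ there is one further kernel class coming from $\iota_4^2$, on which pulling back is still injective.
\end{enumerate}
The paper's Lemma~\ref{lem:postnikov_tower}(ii) instead assembles $\St^5_{3,\tw}$ from equivariant representatives of $\rho_3$, $P^1_3$ and $\beta$. Matching that composite with the equivariant $k$-invariant faces the same $2$-primary ambiguity, so this check is worth including on either route.

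Two side claims should be dropped:
\begin{enumerate}[(a)]
\item $\St^5_3=\beta P^1_3\rho_3$ is $3$-primary and has nothing of ``$\delta Sq^4$-type''. Fibrewise, the $k$-invariant vanishes $2$-locally.
\item $\pi_8(\MSO(4))$ has rank one, not two: the second $\ZZ$ in $H^8(\MSO(4);\ZZ)$ is the decomposable class $u^2=ue$. Had $\pi_{n+4}$ really been $\ZZ^2$, your sufficiency argument would have missed a second component of the $k$-invariant.
\end{enumerate}
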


Finally in Section \ref{sec:application} we apply our results to give what we believe to be the first known examples of non-realisable integral homology classes in non-orientable manifolds. Our examples are $7$-dimensional homology classes in non-orientable manifolds of dimensions $10$ and $11$, with the former expected to be minimal. They are sufficiently different to include both, although in each case we leverage an example in the untwisted case due to Bohr--Hanke--Kotschick \cite{BohrHankeKotschick00, BohrHankeKotschick02} to show that the twisted operation $ \St^5_{3, \tw}$ applied to the dual cohomology class is nonzero. 

\section{Preliminaries}\label{sec:prelims}

We denote by $\mathcal{K}$ the category of compactly generated weak Hausdorff spaces and by $\mathcal{K}^*$ its pointed version.
In the following, all spaces and maps are in $\mathcal{K}$ unless stated otherwise.

\subsection{Classical computations}
\label{subsec:classical_computations}

We will now briefly summarise the main ideas of Thom's work on realisability for constant coefficients \cite{Thom54}, which inspire and inform the approach taken in this work.
To begin, let us recall that cobordism classes of submanifolds geometrically realise homology classes in a certain range.
Let $X$ be an $(n+k)$-dimensional oriented connected manifold without boundary.

\begin{definition}[see also \cite{Thom54, Atiyah61}]\label{def:oriented_cobordism}
    Two $k$-dimensional closed oriented submanifolds $\iota_i\colon M_i\hookrightarrow X$ ($i=0,1$) are said to be \emph{oriented cobordant} if there is a $(k+1)$-dimensional compact oriented submanifold $W\hookrightarrow X\times [0,1]$ such that 
    \begin{enumerate}[(i)]
        \item $W$ meets $X\times\{i\}$ transversely in $M_i \times \{i\}$,
        \item $W$ has no other boundary, and
        \item the orientation on $\partial W$ agrees with $-M_0 \times \{0\}$ and $M_1 \times \{1\}$.
    \end{enumerate}
    Here $-M_0$ denotes the oriented manifold $M_0$ with the opposite orientation.
    We denote the set of oriented cobordism classes of $k$-dimensional submanifolds of $X$ by $\LL_k(X; \ZZ)$.
\end{definition}

In the stable range, that is, for $n > k+1$, the disjoint union endows \(\LL_k(X;\ZZ)\) with the structure of an abelian group, where the identity is represented by the empty submanifold and inverses are induced by reversing orientations.

An orientation of an embedded $k$-dimensional orientable submanifold $\iota\colon M\hookrightarrow X$ determines a choice of fundamental class $[M]\in H_k(M;\ZZ)$. Consequently, there is a natural map
\begin{equation}\label{eq:rep}
 \mu \colon \LL_k(X;\ZZ) \to H_k(X;\ZZ),\qquad [\iota\colon M\hookrightarrow X] \mapsto \iota_*[M],
\end{equation}
whose image is comprised of the realisable integer homology classes.
Thus equipped with~\eqref{eq:rep}, the following commutative diagram 
\begin{equation}\label{eq:Thom_rep_diagram}
\begin{tikzcd}[sep=1cm]
    {[X, \MSO(n)]} \arrow[d, "u_\ast"'] \arrow[r, "\cong"', "\text{P--T}"] & \LL_k(X;\ZZ) \arrow[d, "\mu"] \\
    H^n(X; \ZZ) \arrow[r, "\cong"', "\mathrm{PD}"]                 & H_k(X; \ZZ)
\end{tikzcd}
\end{equation}
illustrates the heart of Thom's representability result. By way of elaboration, the left vertical map of \eqref{eq:Thom_rep_diagram} is induced by the map $u \colon \MSO(n) \to K(\ZZ,n)$ representing the universal Thom class, the top horizontal map is given by the Pontryagin--Thom construction, and the bottom horizontal map is given by Poincar\'e duality.
Altogether, one infers from the commutativity of \eqref{eq:Thom_rep_diagram} that a cohomology class $\alpha \in H^n(X;\ZZ)$ is realisable if and only if $\alpha$ is in the image of $u_\ast$. In other words, the realisability of a given cohomology class \(\alpha \in H^n(X;\ZZ)\) is equivalent to finding a solution of the lifting problem \eqref{eq:representability_lifting_problem} below: 
\begin{equation}\label{eq:representability_lifting_problem}
\begin{tikzcd}
	& \MSO(n) \arrow[d, "u"]\\
    X \arrow[ur, dashed] \arrow[r, "\alpha"'] & K(\ZZ,n) \mathrlap{.}
\end{tikzcd}\end{equation}

In order to compute the obstructions to lifting in \eqref{eq:representability_lifting_problem}, Thom \cite[Chapitre II]{Thom54} studied the Postnikov tower of the \((n-1)\)-connected space $\MSO(n)$. Indeed, Thom computed that the first non-vanishing homotopy group of \(\MSO(n)\) is $\pi_n(\MSO(n))\cong \ZZ$ and that the map to the bottom Postnikov piece is the Thom class $u \in H^n(\MSO(n);\ZZ)$.
Furthermore, for $n\ge 3$, Thom determined that the first non-trivial $k$-invariant is the so-called \emph{Steenrod cube} \[\St^5_3 \coloneq \beta \, P^1_3 \, \rho_3 \colon K(\ZZ,n) \to K(\ZZ,n+5).\]
Above we have written \(P^1_3 \colon K(\ZZ_3,n) \to K(\ZZ_3,n+4)\) for the reduced Steenrod power operation (as defined in \cite{Steenrod52}), \(\rho_3\) for the mod\;\(3\) reduction, and \(\beta\) for the Bockstein homomorphism associated to the exact coefficient sequence
\[
    0 \longrightarrow\ZZ \overset{\times 3}\longrightarrow \ZZ \overset{\rho_3}\longrightarrow \ZZ_3\longrightarrow 0.
\]

It follows, in particular, that if $X$ is an oriented closed manifold of dimension at most $n+5$, then any cohomology class $\alpha \in H^n(X;\ZZ)$ is realisable. In fact, pushing the calculations a little further, Thom showed that if $n\ge8$ and the dimension of $X$ is at most $n+8$, then $\alpha\in H^n(X;\ZZ)$ is realisable if and only if $\St^5_3(\alpha) = 0$ \cite[Theorem II.17]{Thom54}. 

\subsection{Local coefficient systems}
\label{subsec:loccoeffsys}

We now introduce the notion of a local coefficient system on a topological space and collect various useful facts about such systems.
There are several definitions to choose from, all of which coincide up to isomorphism for nice spaces (e.g.~path-connected manifolds or path-connected CW complexes).
For instance, Steenrod \cite{Steenrod43} and Whitehead \cite[Chapter VI]{Whitehead79} define a local coefficient system on a connected space $X$ to be a contravariant functor \[\AA\colon\Pi_1(X)\to \catname{Ab}\] from the fundamental groupoid of $X$ to the category of abelian groups.
They go on to show that for some basepoint \(x_0 \in X\), the existence of such a functor is equivalent to stipulating a (left) $\pi_1(X,x_0)$-module $A$, see e.g.~\cite[VI: (1.11), (1.12)]{Whitehead79}.
Alternatively, one can view local coefficient systems as locally constant sheaves of abelian groups (see \cite[Chapter 6, Exercises~F]{Spanier95}).

Ultimately, we prefer to think of local coefficient systems as bundles of groups, for this not only allows us to avoid choosing arbitrary base points but also makes a number of naturality and isomorphism statements clear.
Accordingly, we comply with the standard textbook treatments of either Hatcher \cite[\S 3.H, pp.\ 330--336]{Hatcher02} or Davis and Kirk \cite{DavisKirk01} and make the following definitions.

\begin{definition}[Local coefficient system]\label{def:local_coefficient_systems}
    Let $X \in \mathcal{K}$ be a topological space and $A$ be an abelian group viewed as a discrete space.
    A \emph{local coefficient system $\AA$ on $X$} is a fibre bundle $p \colon \AA\to X$ with fibre $A$ and structure group $G\leq \operatorname{Aut} A$.
\end{definition}
    
Note that the fibre $\AA_x\coloneq p^{-1}(x)$ for $x \in X$ is in general non-canonically isomorphic to $A$.
We denote the \emph{trivial local coefficient system} by $\underline{A} \coloneq X \times A$.
\begin{definition}[Morphism of local coefficient systems]
    A \emph{morphism} $\varphi \colon \AA\to \AA'$ of local coefficients systems over $X$ is a commuting diagram
    \[
    \begin{tikzcd}[sep=small]
        \mathcal{A} \arrow[dr] \arrow[rr, "\varphi"] &&  \mathcal{A}' \arrow[dl] \\
                        &           X & 
    \end{tikzcd}
    \]
    such that $\varphi_x \colon \AA_x \to \AA'_x$ is a group homomorphism for each $x \in X$.
\end{definition}

\begin{example}\label{ex:pullback_system}
    If $f\colon X\to Y$ is a continuous map and $\AA \to Y$ is a local coefficient system on $Y$, then the \emph{pullback} $f^*\AA\to X$ is a local coefficient system on $X$. Since the fibres are discrete, by the uniqueness of path lifting a homotopy $H \colon X\times [0,1] \to Y$ from $f_0$ to $f_1$ induces an isomorphism $f_0^*\AA \to f_1^*\AA$ of local coefficient systems.
\end{example}

We define a category $\mathcal{K}_\mathcal{L}$ of local coefficient systems as follows. Objects of $\mathcal{K}_\mathcal{L}$ are triples $(X,C;\AA)$ where $(X,C)$ is a pair of spaces in $\mathcal{K}$ and $\AA$ is a local coefficient system on $X$. A morphism $(f;\varphi)$ in $\mathcal{K}_\mathcal{L}$ from $(X,C;\AA)$ to $(Y,D;\BB)$ consists of a map of pairs ${f \colon (X,C)\to (Y,D)}$ and a morphism of local systems $\varphi \colon \AA\to f^*\BB$.

The singular homology $H_*(X,C;\AA)$ and cohomology $H^*(X,C;\AA)$ groups with local coefficients of a pair of spaces \((X,C)\) are defined in the usual way, see e.g.~\cite{Hatcher02, DavisKirk01}.
In particular, homology with local coefficients is a covariant functor
\[
    H_*(\placeholder,\placeholder;\placeholder) \colon \mathcal{K}_\mathcal{L} \to \catname{GrAb}
\]
from $\mathcal{K}_\mathcal{L}$ to graded abelian groups, satisfying the appropriate analogues of the Eilenberg--Steenrod axioms \cite{Whitehead79}. To describe the functoriality of cohomology with local coefficients, we must define a new category $\mathcal{K}_{\mathcal{L}^*}$ with the same objects as $\mathcal{K}_\mathcal{L}$ but with a morphism $(f;\varphi)$ from $(X,C;\AA)$ to $(Y,D;\BB)$ given by a map $f \colon (X,C)\to (Y,D)$ and a morphism of local systems $\varphi \colon  f^*\BB\to\AA$. Accordingly, cohomology with local coefficients is a contravariant functor
\[
    H^*(\placeholder,\placeholder;\placeholder) \colon \mathcal{K}_{\mathcal{L}^*} \to \catname{GrAb}
\]
satisfying the ``dualised'' Eilenberg--Steenrod axioms.
In particular, an isomorphism $\varphi \colon \AA\to \BB$ of local coefficient systems over $X$ induces isomorphisms
\begin{align*}
    \varphi_* &= (\id_X,\varphi)_* \colon H_*(X,C;\AA) \stackrel{\cong}{\longrightarrow} H_*(X,C;\BB), \\
    \varphi^* &= (\id_X,\varphi)^* \colon H^*(X,C;\AA) \stackrel{\cong}{\longrightarrow} H^*(X,C;\BB)
\end{align*}
in (co)homology with local coefficients.

To discuss multiplicativity of these homology and cohomology theories, one needs to define tensor products of local coefficient systems. 

\begin{construction}[Tensor product of local coefficient systems]
Let $\AA\to X$ and $\BB\to X$ be local coefficient systems on a space \(X\) with fibres $A$ and $B$, respectively. Take a common trivializing cover $\{U_i\}$ of the local systems so that $\AA$ and \(\BB\) have transition functions 
\[g_{ij}:U_i\cap U_j\to G \leq\operatorname{Aut}(A) \text{ and } h_{ij}:U_i\cap U_j\to H \leq \operatorname{Aut}(B),\] respectively. 

We declare the \emph{tensor product of \(\AA\) and \(\BB\)}, denoted $\AA\otimes\BB$, to be the local coefficient system with fibre $A\otimes B\coloneq A\otimes_\ZZ B$ whose transition functions are given by \[g_{ij}\otimes h_{ij} \colon U_i\cap U_j\to \Gamma \leq  \operatorname{Aut}(A\otimes B).\] Here the structure group \(\Gamma \subseteq \Aut(A \otimes B)\) of the local system \(\AA \otimes \BB\) is given by the image of the natural map 
\[
G \times H \to \Aut(A\otimes B), \quad (g,h) \mapsto (a \otimes b \mapsto g(a) \otimes h(b)).
\]
\end{construction}
As a consequence of this construction, one can define cup and cap products with local coefficients
\begin{align*}
    \cup &\colon H^n(X,C;\AA)\otimes H^\ell(X,D;\BB) \longrightarrow H^{n+\ell}(X,C\cup D;\AA\otimes\BB), \\
    \cap &\colon H^n(X,C;\AA)\otimes H_\ell(X,D;\BB) \longrightarrow H_{\ell-n}(X,C\cup D;\AA\otimes\BB),  
\end{align*}
by analogy to the untwisted case. 

We conclude this section with the following useful and well-known facts, cf.~\cite[Exercise 2, p.\ 283]{Spanier95}. 

\begin{prop}\label{prop:local_systems} Denote by \(\underline{\ZZ}\) the trivial local system of integer coefficients. 
\begin{enumerate}[label=\upshape{(\roman*)}] 
    \item The tensor product of local coefficient systems is (up to canonical isomorphism) associative and unital, where the unit is $\underline{\ZZ}$.
    \item If $\AA$ is a local system of integer coefficients, then the tensor square $\AA\otimes\AA$ is canonically isomorphic to $\underline{\ZZ}$. Due to this and the associativity of the tensor product, if $\AA$ and $\BB$ are local systems of integer coefficients, then an isomorphism $\AA\otimes \BB\to \underline{\ZZ}$ furnishes an isomorphism $\BB\to \AA$, and vice-versa.
    \item The tensor product commutes with taking pullbacks in the sense that there is a canonical isomorphism $f^*(\AA\otimes\BB)\cong f^*\AA\otimes f^*\BB$.
\end{enumerate}
\end{prop}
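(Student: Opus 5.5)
The plan is to argue entirely with transition functions on a common trivialising cover $\{U_i\}$ of $X$ (refining covers as needed), which is how the tensor product was defined in the Construction above. For (i), let $\AA,\BB,\mathcal{C}$ have fibres $A,B,C$ and transition functions $g_{ij},h_{ij},k_{ij}$. The canonical isomorphism of abelian groups $(A\otimes B)\otimes C\cong A\otimes(B\otimes C)$, $(a\otimes b)\otimes c\mapsto a\otimes(b\otimes c)$, intertwines $(g_{ij}\otimes h_{ij})\otimes k_{ij}$ with $g_{ij}\otimes(h_{ij}\otimes k_{ij})$. Hence applying it chart by chart gives well-defined bundle maps that glue to an isomorphism $(\AA\otimes\BB)\otimes\mathcal{C}\cong\AA\otimes(\BB\otimes\mathcal{C})$. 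This isomorphism is independent of the chosen cover, because the chart-wise formula is natural and so agrees on common refinements. The unit isomorphism $\AA\otimes\underline{\ZZ}\to\AA$, $a\otimes m\mapsto ma$, is handled the same way, since $\underline{\ZZ}$ has trivial transition functions and the isomorphism commutes with every $g_{ij}$.

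For (ii), a local system with fibre $\ZZ$ has structure group contained in $\Aut(\ZZ)=\{\pm1\}$. Its transition functions $g_{ij}$ therefore take values in $\{\pm1\}$, and those of $\AA\otimes\AA$ are $g_{ij}\otimes g_{ij}=g_{ij}^2=1$ under the canonical identification $\ZZ\otimes\ZZ\cong\ZZ$, $a\otimes b\mapsto ab$. This identification is equivariant: $(ga)(gb)=g^2ab=ab$. So it glues globally to a canonical isomorphism $\AA\otimes\AA\to\underline{\ZZ}$. A more intrinsic description, independent of charts, is fibrewise multiplication $\AA_x\otimes\AA_x\to\ZZ$, $a\otimes b\mapsto ab$. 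This is well defined because changing the identification $\AA_x\cong\ZZ$ by $-1$ leaves the product unchanged, and it is continuous since it is locally constant.

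For the second sentence of (ii), suppose we are given an isomorphism $\psi\colon\AA\otimes\BB\to\underline{\ZZ}$. We form the composite
\[
\BB\cong\underline{\ZZ}\otimes\BB\cong(\AA\otimes\AA)\otimes\BB\cong\AA\otimes(\AA\otimes\BB)\xrightarrow{\id\otimes\psi}\AA\otimes\underline{\ZZ}\cong\AA,
\]
using (i) and the first part of (ii). Here the tensor product of two bundle morphisms is again defined chart-wise. Conversely, an isomorphism $\BB\to\AA$ tensored with $\id_\AA$ gives $\AA\otimes\BB\cong\AA\otimes\AA\cong\underline{\ZZ}$.

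For (iii), if $\{U_i\}$ trivialises $\AA$ and $\BB$ with transition functions $g_{ij},h_{ij}$, then $\{f^{-1}U_i\}$ trivialises the pullbacks, with transition functions $g_{ij}\circ f$ and $h_{ij}\circ f$. Both $f^*(\AA\otimes\BB)$ and $f^*\AA\otimes f^*\BB$ are then described by the fibre $A\otimes B$ and the same transition functions $(g_{ij}\otimes h_{ij})\circ f$. Intrinsically, the canonical map sends the point $(x,[a\otimes b])$, where $[a\otimes b]$ lies over $f(x)$, to $[(x,a)\otimes(x,b)]$. On fibres this is the identity of $\AA_{f(x)}\otimes\BB_{f(x)}$, so it is a bijective local homeomorphism, hence an isomorphism.

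The only genuinely delicate step is well-definedness, namely that the chart-wise isomorphisms do not depend on the chosen trivialisations and common refinements. I would settle this once by noting that the tensor product bundle can be described intrinsically, with fibre $\AA_x\otimes\BB_x$ topologised by local sections. All maps above are then given by fibrewise canonical formulas, so they are automatically independent of charts.
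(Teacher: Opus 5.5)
Your proposal is correct. There is no proof in the paper to compare it against: the authors record (i)--(iii) as well-known facts, refer to an exercise in \cite{Spanier95}, and give no argument. So your write-up fills in the proof rather than offering a variant of one.

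The most economical route would pass to the equivalent description of local systems as $\pi_1$-modules, which the paper mentions in Subsection~\ref{subsec:loccoeffsys}. There, (i) says that the associativity and unit isomorphisms of $\otimes_{\ZZ}$ are equivariant for diagonal actions. Item (ii) says that if $\pi_1(X)$ acts on $\ZZ$ through a character $\chi\colon\pi_1(X)\to\{\pm1\}$, it acts on $\ZZ\otimes\ZZ$ through $\chi^2=1$. Item (iii) says that restricting actions along $f_*$ commutes with tensor products.

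Your transition-function argument stays inside the bundle-of-groups framework the paper actually uses. The cost is the cover-independence issue, which you correctly single out. Your remedy deserves to be the starting point rather than an afterthought. Take $\AA\otimes\BB=\bigsqcup_x\AA_x\otimes\BB_x$, topologised by tensor products of local sections, as the definition, so that the paper's chart-based Construction merely describes it. Then every map in (i)--(iii) is a fibrewise canonical formula that is locally the identity or multiplication in a trivialisation, and continuity and canonicity come for free.

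One harmless wrinkle in (ii): the paper's Construction nominally gives $\AA\otimes\AA$ the structure group image of $G\times G$. This is all of $\{\pm1\}$ when $G$ is, even though every transition function $g_{ij}\otimes g_{ij}$ is $+1$. Morphisms of local systems are just bundle maps that are fibrewise homomorphisms, so your isomorphism to $\underline{\ZZ}$ is unaffected.
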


\subsection{Parametrised homotopy theory}\label{subsec:parametrised_homotopy_theory}

In this section, we provide a brief overview of the parametrised homotopy theory utilised throughout this article. For more details, we refer the reader to the book of Crabb and James~\cite{CrabbJames98}; however, in lieu of the classical dialect used there (where ``fibrewise'' is the dominant adjective), we instead opt for the modern terminology used in May--Sigurdsson~\cite{MaySigurdsson06} and Malkiewich~\cite{Malkiewich23}.

\begin{definition}\label{def:parametrised_spaces}
Let \(B\) be a topological space.
We write $\mathcal{K}_{/B}$ for the slice category over $B$.

\begin{enumerate}[label=(\roman*)]
    \item A \emph{parametrised space over $B$} is an object in $\mathcal{K}_{/B}$, that is a space $X$ equipped with a map $p_X \colon X \to B$.
    A \emph{pair over $B$} is a pair $(X,C)$ together with a map $p_X\colon X\to B$; equivalently, it is a pair in the slice category $\mathcal{K}_{/B}$.
    \item A \emph{parametrised map of pairs over $B$} is a morphism of pairs in $\mathcal{K}_{/B}$, that is a map of pairs
    \[
         f \colon (X,C)\longrightarrow (Y,D),
    \]
    satisfying $p_Y \circ f = p_X$.
    Homotopies, homotopy classes, and homotopy equivalences over $B$ are understood in the slice category $\mathcal{K}_{/B}$.
    We write
    \[
        [(X,C),(Y,D)]_B
    \]
    for the set of homotopy classes of maps of pairs over $B$.

    \item A parametrised space $(X,p_X)$ is called \emph{fibrant} if $p_X \colon X \to B$ is a fibration in $\mathcal{K}$.
    
    \item A \emph{retractive space over $B$} is a parametrised space $(X, p_X)$ over $B$ together with a section $s \colon B \to X$, namely
    \[
    \begin{tikzcd}
        B \arrow[r, "s_X"] \arrow[rr, bend right, "\id_B"'] & X \arrow[r, "p_X"] & B \mathrlap{.}
    \end{tikzcd}
    \]
    A \emph{map of retractive spaces} is a parametrised map over $B$ which also preserves the chosen sections.
    We denote the category of retractive spaces over $B$ by~$\mathcal{K}^*_{/B}$.

    \item If $Y$ is a retractive space over $B$ and $(X,A)$ is a pair over $B$, we set
    \[
        [(X,A),Y]^*_B \coloneq [(X,A),(Y,s_Y(B))]_B
    \]
    Thus $[(X,A),Y]^*_B$ consists of homotopy classes over $B$ of maps $f\colon X\to Y$ such that $f(A)\subseteq s_Y(B)$.
    We write $\simeq_B$ for the corresponding homotopy relation.
\end{enumerate}
\end{definition}

The following statements of Lemma \ref{lem:forgetful_homotopy_classes_bijection} and Proposition \ref{prop:n-equiv} are instances of \cite[Theorem~3.1 \& 3.2]{JamesThomas66} adapted to our particular situation. 

\begin{lemma}\label{lem:forgetful_homotopy_classes_bijection}
    Let $(Y, p_Y, s_Y)$ be a fibrant retractive space over $B$.
    Let $(X,A)$ be a pair over $B$ with projection $p_X \colon X \to B$, and assume that $A \hookrightarrow X$ is a cofibration.
    Then the forgetful map $$[(X,A), Y]_B^* \to (p_Y)_*^{-1}([p_X]), \quad [f]_B \mapsto [f]$$
    is a bijection, where $(p_Y)_* \colon [(X,A), (Y, s_Y(B))] \to [(X,A), (B, B)]$ is the induced map on usual homotopy classes of maps of pairs so that \[(p_Y)_*^{-1}([p_X]) = \{[f] \in [(X,A), (Y,s_Y(B))] \mid p_Y \circ f \simeq p_X\}.\]
\end{lemma}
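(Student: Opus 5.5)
To prove Lemma~\ref{lem:forgetful_homotopy_classes_bijection}, I would check well-definedness, then surjectivity, then injectivity. Both nontrivial steps rest on one tool: the homotopy lifting extension property of the fibration $p_Y$ relative to the cofibration $A\hookrightarrow X$. Equivalently, since $(X\times I, X\times\{0\}\cup A\times I)$ is an NDR pair, any lifting problem posed on that pair against $p_Y$ has a solution.

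\textbf{Well-definedness.} A map of pairs over $B$ satisfies $p_Y\circ f=p_X$ on the nose, so its class lies in $(p_Y)_*^{-1}([p_X])$. A homotopy over $B$ is in particular a homotopy of pairs $(X,A)\to (Y,s_Y(B))$, so the map is well defined.

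\textbf{Surjectivity.} Take $f\colon (X,A)\to (Y,s_Y(B))$ and a homotopy of pairs $H\colon (X,A)\times I\to (B,B)$ with $H_0=p_Y\circ f$ and $H_1=p_X$.
\begin{itemize}
\item On $A\times I$, set $K(a,t)=s_Y(H(a,t))$. This lies in $s_Y(B)$ and covers $H$ there.
\item At $t=0$ on $A$, we have $f(a)\in s_Y(B)$, so $f(a)=s_Y(p_Y f(a))=s_Y(H(a,0))$, because $s_Y p_Y$ is the identity on $s_Y(B)$. Hence $f$ and $K$ agree on $A\times\{0\}$.
\item Lift $H$ to $\tilde H\colon X\times I\to Y$ extending $f\cup K$.
\end{itemize}
Then $\tilde H_1$ covers $p_X$ exactly and sends $A$ into $s_Y(B)$, so it is a map over $B$. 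Moreover $\tilde H$ is a homotopy of pairs from $f$ to $\tilde H_1$, which proves surjectivity.

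\textbf{Injectivity.} Take maps $f_0,f_1$ over $B$ and a homotopy of pairs $G\colon f_0\simeq f_1$. Then $p_Y G$ is a homotopy of pairs from $p_X$ to $p_X$ into $(B,B)$, and we must replace $G$ by a homotopy over $B$. I would pass to the square $X\times I\times I$ with coordinates $(s,t)$ and set
\[
\Phi(x,s,t)=p_Y G(x,(1-t)s),
\]
so that $\Phi|_{t=0}=p_Y G$ while $\Phi|_{t=1}=p_X\circ \pr_X$ and $\Phi|_{s=0}=\Phi|_{s=1}=p_X$ hold on the nose. Prescribe the lift on $t=0$ by $G$, on $s=0,1$ by the constant homotopies at $f_0,f_1$, and on $A\times I\times I$ by $s_Y\circ\Phi$. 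These pieces agree on overlaps: on $A$, $G$ lands in $s_Y(B)$ and equals $s_Y p_Y G$. The pair $(X\times I,\ X\times\partial I\cup A\times I)$ is again a cofibration, so the homotopy lifting extension property for $p_Y$, with $t$ as the lifting direction, gives a lift of $\Phi$ on all of $X\times I\times I$. Its face $t=1$ covers $p_X$, sends $A$ into $s_Y(B)$, and runs from $f_0$ to $f_1$, so it is the required homotopy over $B$.

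The main obstacle is this injectivity step. The difficulty is arranging the boundary data so that the required compatibilities hold on the nose and the lifting extension property applies to the right cofibration. The reparametrisation $\Phi$ above is designed to achieve exactly that. The remaining checks are routine, and this is precisely the argument behind \cite[Theorem~3.1]{JamesThomas66}.
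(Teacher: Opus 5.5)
Your well-definedness and surjectivity steps are correct, and they are the homotopy-lifting-extension argument the paper has in mind; its proof is only a one-line appeal to the HLEP.

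The injectivity step, however, fails. With $\Phi(x,s,t)=p_YG(x,(1-t)s)$ you do get $\Phi(x,0,t)=p_X(x)$ and $\Phi(x,s,1)=p_X(x)$, but on the other face
\[
\Phi(x,1,t)=p_YG(x,1-t).
\]
This is the loop $p_YG$ run backwards, not the constant homotopy at $p_X$. So the constant homotopy at $f_1$ that you prescribe on $s=1$ does not cover $\Phi$ there, and the lifting problem is not well posed. A better formula will not fix this. Your boundary data ($G$ at $t=0$, constants at $s=0,1$, a homotopy over $B$ at $t=1$) force $\Phi$ to be a homotopy, relative to $s\in\{0,1\}$, from the loop $s\mapsto p_Y\circ G_s$ in $\mathrm{Map}(X,B)$ based at $p_X$ to the constant loop. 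Your argument therefore needs $[p_YG]=1$ in $\pi_1(\mathrm{Map}(X,B),p_X)$ for some choice of $G$, and nothing in the hypotheses provides this.

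In fact the injectivity half of the statement is false without further hypotheses. So neither a different $\Phi$ nor the paper's one-line appeal to the HLEP can close this gap.

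\textbf{A counterexample.} Take $B=S^1$ and $Y=S^1\sqcup\RR$ with $p_Y=\id\sqcup(t\mapsto e^{2\pi i t})$. Let $s_Y$ be the inclusion of the first summand; this is a fibrant retractive space. Take $(X,A)=(\ast,\emptyset)$ with $p_X(\ast)=1$. Then $[(X,A),Y]^*_B$ is the set of points of the discrete fibre $p_Y^{-1}(1)\cong\{1\}\sqcup\ZZ$, which is infinite. By contrast, $(p_Y)_*^{-1}([p_X])=\pi_0(Y)$ has two elements.

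\textbf{The paper's own setting.} The same thing happens for $Y=L(\ZZ,n)$ over $\B\ZZ_2$ with $n\ge 2$, and $X=S^n$ mapped to a point. The fibrewise classes form $H^n(S^n;\ZZ)\cong\ZZ$. The free classes form $\pi_n(L(\ZZ,n))/\pi_1(L(\ZZ,n))\cong\ZZ/\{\pm1\}$. So the forgetful map identifies $\alpha$ with $-\alpha$.

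\textbf{What is actually true.} Surjectivity holds, by your argument. Lifting a loop $\lambda$ at $p_X$, starting at a fibrewise map, exactly as in your surjectivity step, defines an action of $\pi_1(\mathrm{Map}(X,B),p_X)$ on $[(X,A),Y]^*_B$. Two fibrewise classes have the same image under the forgetful map if and only if they lie in the same orbit; your $G$ is precisely a lift of $\lambda=p_YG$ from $f_0$ to $f_1$. Injectivity thus needs an extra hypothesis, for example $\pi_1(\mathrm{Map}(X,B),p_X)=0$. Under that hypothesis your square-lifting argument goes through, with $\Phi$ taken to be a null-homotopy of $p_YG$ relative to $s=0,1$.
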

\begin{proof}
    The proof is a standard exercise in applying the homotopy lifting extension property.
\end{proof}

\begin{prop}\label{prop:n-equiv}
    Let $(Y,p_Y, s_Y)$ and $(Z,p_Z, s_Z)$ be fibrant retractive spaces over $B$ and let ${f \colon Y \to Z}$ be a map of retractive spaces.
    Assume that the induced map
    \[
        \pi_i(f) \colon \pi_i(Y) \to \pi_i(Z)
    \]
    is an isomorphism for $i < n$ and an epimorphism for $i = n$.
    If $(X,A)$ is a pair of CW complexes over $B$ of dimension $\dim(X-A) = d$, then the induced map
    \[
        f_* \colon [(X,A), Y]_B^* \to [(X,A), Z]_B^*
    \]
    is a bijection if $d < n$ and an epimorphism if $d = n$.
\end{prop}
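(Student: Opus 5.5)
The approach is the parametrised analogue of the classical Whitehead theorem, run by obstruction theory over $B$. The conditions on $\pi_i(f)$ are read fibrewise: the maps $\pi_i(Y)\to\pi_i(Z)$ of homotopy groups of total spaces. Comparing the long exact sequences of the fibrations $p_Y$ and $p_Z$ with the five lemma, and using the sections $s_Y,s_Z$ to control $\pi_0$ and $\pi_1$, shows that the restriction $f_b\colon Y_b\to Z_b$ to fibres is $n$-connected for every $b\in B$. We may assume this throughout.

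First we reduce to the case where $f$ is a fibration over $B$. Factor $f$ in $\mathcal{K}_{/B}$ as a homotopy equivalence over $B$ followed by a fibration $f'\colon Y'\to Z$; the mapping path space construction can be done fibrewise and keeps the section. By fibrancy of $Y$ and $Z$ and the fibrewise Dold theorem, $Y\simeq Y'$ as retractive spaces over $B$, so we may replace $f$ by $f'$. Its fibre $F$ over a point of $Z$ is $(n-1)$-connected.

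\textbf{Surjectivity} for $d\le n$. Let $g\colon (X,A)\to (Z,s_Z(B))$ be a map over $B$. On $A$ there is the canonical lift $s_Y\circ p_X$. We extend the lift cell by cell over the relative cells of $X-A$. Because $f$ is a fibration, lifting $g$ over $B$ amounts to finding a section of the pullback fibration $g^*Y\to X$, extending the section given on $A$. The obstructions lie in $H^{i+1}(X,A;\pi_i(F))$ with local coefficients, for $i+1\le d$. They vanish since $\pi_i(F)=0$ for $i\le n-1$ and $d\le n$. So a lift exists, and composing with $f$ gives back $g$ exactly.

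\textbf{Injectivity} for $d<n$. Suppose $f\circ g_0\simeq_B f\circ g_1$ rel $s(B)$. Apply the previous paragraph to the pair
\[
(X\times I,\; X\times\partial I\cup A\times I),
\]
which has relative dimension $d+1\le n$. The homotopy in $Z$ is the map to lift, and the prescribed lift on the subcomplex is $g_0\sqcup g_1$ on the ends and the section on $A\times I$. The resulting lift is a homotopy $g_0\simeq_B g_1$ rel $s(B)$. Lemma~\ref{lem:forgetful_homotopy_classes_bijection} guarantees that working with maps over $B$ strictly, rather than up to homotopy of projections, loses nothing here.

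The main obstacle is the careful passage from the connectivity hypothesis on total spaces to the $(n-1)$-connectivity of the homotopy fibre of $f$, including the low-degree and non-simple twisting issues. This is what James--Thomas handle in \cite{JamesThomas66}, and I would cite their Theorems 3.1 and 3.2 for that step.
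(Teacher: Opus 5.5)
Your proof is correct, but it takes a different route from the paper's.

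\textbf{The paper's route.} The paper never lifts anything over $B$. It observes that $f_*$ on ordinary homotopy classes of maps of pairs $[(X,A),(Y,s_Y(B))]\to[(X,A),(Z,s_Z(B))]$ commutes with the projections to $[(X,A),(B,B)]$. So $f_*$ restricts to a map $\Psi_f$ between the preimages of $[p_X]$, and $\Psi_f$ is bijective or surjective by the classical Whitehead-type theorem. The paper then transports this to $[(X,A),-]^*_B$ using the forgetful bijection of Lemma~\ref{lem:forgetful_homotopy_classes_bijection}.

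\textbf{Your route.} You stay parametrised throughout. You replace $f$ by a fibration $f'$ over $B$ via the fibrewise mapping path space and note that its fibre is $(n-1)$-connected. You then solve the section-extension problem cell by cell, and relative to $X\times\partial I\cup A\times I$ for injectivity.

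\textbf{What each buys.} The paper's argument is shorter, but all of its parametrised content sits in the forgetful lemma, whose injectivity half fails in general. The forgetful map is surjective, but its fibres are orbits of an action of $\pi_1(\operatorname{Map}(X,B),p_X)$. For instance, take $B=S^1$, $Y=\RR\sqcup S^1$ (covering map on $\RR$, identity on the section component) and $X$ a point with $A=\emptyset$. Then $[(X,A),Y]^*_B=\ZZ\sqcup\{\ast\}$, while the preimage of $[p_X]$ is all of $\pi_0(Y)$, which has two elements. Your argument never needs that lemma, since every lift and homotopy you build is already strictly over $B$ and section-preserving. So your closing sentence invoking Lemma~\ref{lem:forgetful_homotopy_classes_bijection} should simply be deleted.

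\textbf{Minor simplifications.}
\begin{itemize}
\item With the fibrewise path space, $Y\hookrightarrow Y'$ is already a section-preserving fibrewise deformation retract, so no Dold-type theorem is needed.
\item The $(n-1)$-connectivity of the fibre of $f'$ follows directly from the long exact sequence of $f'$ together with $Y'\simeq Y$. This makes the fibrewise five-lemma step and the appeal to James--Thomas unnecessary.
\item Equality $f'\circ\tilde g=g$ holds only for the replacement. For $f$ itself the lift is $r\circ\tilde g$, with $r\colon Y'\to Y$ the retraction, and $f\circ r\circ\tilde g\simeq_B g$ relative to the section, not equal to it.
\end{itemize}
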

\begin{proof}
    First, note that the following diagram of sets of usual homotopy classes of maps of pairs commutes:
    \[
    \begin{tikzcd}[column sep={6em,between origins}]
        {[(X,A), (Y,s_Y(B))]} \arrow[rr, "f_*"] \arrow[dr, "(p_Y)_*"'] & & {[(X,A), (Z,s_Z(B))]} \arrow[dl, "(p_Z)_*"] \\
        & {[(X,A),(B,B)]} \mathrlap{.} &
    \end{tikzcd}
    \]
    Thus $f_*$ restricts to a map $\Psi_f \colon (p_Y)_*^{-1}([p_X]) \to (p_Z)_*^{-1}([p_X])$.
    Moreover, by classical results, the map $\Psi_f$ is bijective for $d < n$ and surjective for $d=n$.
    Second, apply Lemma~\ref{lem:forgetful_homotopy_classes_bijection} to obtain the following chain of maps:
    \begin{equation}\label{eq:chainofmaps}
    \begin{tikzcd}[sep=small]
        {[(X,A), Y]_B^*} \arrow[r, "\cong"] & (p_Y)_*^{-1}([p_X]) \arrow[r, "\Psi_f"] & (p_Z)_*^{-1}([p_X]) & {[(X,A), Z]_B^*} \mathrlap{.} \arrow[l, "\cong"']
    \end{tikzcd}
    \end{equation}
    The result now follows from a routine check that the composition \eqref{eq:chainofmaps} is given precisely by $f_* \colon [(X,A), Y]_B^* \to [(X,A), Z]_B^*$.
\end{proof}

\begin{lemma}\label{lem:fibrewisepushouts}
    Let $g \colon (X,p_X)\to (Y,p_Y)$ be a parametrised map in $\mathcal{K}_{/B}$. Form the pushout
    \[
    \begin{tikzcd}
        X \arrow[r,"p_X"] \arrow[d,"g",swap] & B \arrow[d,"\sigma"] \\
        Y \arrow[r] & Z \arrow[ul, phantom, "\ulcorner", very near start]
    \end{tikzcd}
    \]
    in the category $\mathcal{K}_{/B}$.
    Then the obvious map $p_Z \colon Z \to B$, induced by the pushout, turns $(Z, p_Z, \sigma)$ into a retractive space over $B$.
    Moreover, if $g$ is a closed cofibration over $B$ and $(X,p_X)$ and $(Y,p_Y)$ are fibrant, then $(Z,p_Z)$ is fibrant.
\end{lemma}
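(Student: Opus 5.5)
The argument has two parts: the retractive structure, which is formal, and fibrancy, which needs a lifting argument.

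\emph{Retractive structure.} Since $g$ is a map over $B$, we have $p_Y\circ g = p_X$. The pair $p_Y \colon Y\to B$ and $\id_B \colon B\to B$ is therefore compatible with the span $Y \xleftarrow{g} X \xrightarrow{p_X} B$. By the universal property of the pushout (computed in $\mathcal{K}$, with the induced map to $B$), there is a unique map $p_Z\colon Z\to B$ with $p_Z|_Y = p_Y$ and $p_Z\circ\sigma = \id_B$. This makes $\sigma$ a section of $p_Z$, so $(Z,p_Z,\sigma)$ is a retractive space over $B$.

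\emph{Fibrancy.} I would use the standard theory of fibrewise cofibrations, as in Crabb--James~\cite{CrabbJames98}. A map of fibrations over $B$ that is a closed cofibration in $\mathcal{K}$ is a fibrewise cofibration (a cofibration over $B$); I take this to be the meaning of ``closed cofibration over $B$'' in the hypothesis.

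\begin{enumerate}[(1)]
    \item Consider $p_X\colon X\to B$ as a map over $B$. Since $p_X$ is a fibration, $p_X$ is a fibrewise homotopy equivalence over $B$ if and only if its fibres are contractible. This generally fails, so we cannot argue that way.
    \item Instead, I would apply the fibrewise analogue of the gluing principle: a pushout, along a closed fibrewise cofibration, of fibrant objects over $B$ is fibrant. Concretely, $Z = Y\cup_X B$, where $B$ is fibrant over itself via $\id_B$, $Y$ is fibrant, and $g$ is a closed fibrewise cofibration.
    \item To verify the homotopy lifting property for $p_Z$, take a homotopy $h\colon K\times I\to B$ and an initial lift $\tilde h_0\colon K\to Z$. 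Lifting directly is awkward because $K$ splits over $Z$ only at the level of point sets. So I would use the characterization of fibrations by \emph{lifting functions} (Hurewicz) or the Strøm--Clapp criterion: it suffices to construct a continuous lifting function $\Lambda_Z\colon Z\times_B B^I\to Z^I$.
    \item Choose lifting functions $\Lambda_Y$ for $p_Y$ and $\Lambda_X$ for $p_X$. The map $\Lambda_B(b,\omega)=\omega$ serves for $\id_B$.
    \item Using that $g$ is a fibrewise cofibration, adjust $\Lambda_Y$ on $g(X)$ so that it is compatible with $g$ and $\Lambda_X$. Then it descends to $Z$ once composed with the pushout map, since on $X$ the paths $\Lambda_X$ are sent to $\omega$ in $B$.
\end{enumerate}

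\emph{Main obstacle.} Step (5) is the hard part: making $\Lambda_Y\circ g$ agree with $g\circ\Lambda_X$ up to the collapse, and ensuring continuity of the glued function on the quotient $Z$. Compact generation and closedness of $g$ are needed so that $Z\times_B B^I$ is the corresponding pushout.

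Rather than building the lifting function by hand, I would first try to cite Clapp's result, or \cite[Prop.~II.1.x]{CrabbJames98}, that fibrewise pushouts of fibrant spaces along closed fibrewise cofibrations are fibrant. I would fall back on the explicit lifting-function gluing only if no citation fits cleanly.
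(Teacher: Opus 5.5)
Your proposal is correct and follows the paper's proof. The retractive structure comes from the universal property of the ordinary pushout, which already lies over $B$ via $p_Y$ and $\id_B$, and fibrancy of $p_Z$ is obtained by citing Clapp's gluing result \cite[Prop.~1.3]{Clapp81}, exactly as the paper does, so the hand-built lifting-function argument in your steps (3)--(5) is not needed.
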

\begin{proof}
    That $(Z, p_Z, \sigma)$ is a retractive space follows immediately from the definitions.
    Note that in order to take the pushout of spaces over $B$, we need only take the ordinary pushout since it comes ready-equipped with a map to $B$.
    Finally, the fact that $p_Z$ is a fibration if $g$ is a cofibration and $p_X$ and $p_Y$ are fibrations follows from \cite[Prop.~1.3]{Clapp81}.
\end{proof}

\subsection{Homotopy quotients as parametrised spaces}
\label{subsec:quillen_pair_parametrised_Gspaces}

One important class of parametrised spaces are homotopy quotients of $G$-spaces, where $G$ is a topological group. As it turns out, the homotopy quotient construction provides the left adjoint of a Quillen pair (for appropriately chosen model structures).

If $G$ is a topological group, let $\mathcal{K}_G$ denote the category of $G$-spaces and $G$-maps, and let $\mathcal{K}^*_G$ denote its pointed version, where basepoints are assumed to be $G$-fixed. For each \(\mathcal{C} \in \{\mathcal{K}_G,\mathcal{K}^*_G,\mathcal{K}_{/B},\mathcal{K}_{/B}^*\}\), observe that there is a forgetful functor $U \colon \mathcal{C} \to \mathcal{K}$.
Accordingly, we say that a map $f$ in \(\mathcal{C}\) is a \emph{fibration}, respectively \emph{$n$-equivalence}, respectively \emph{weak equivalence} if the map $U(f)$ is a fibration, resp.\ $n$-equivalence, resp.\ weak equivalence in $\mathcal{K}$.%
\footnote{
    Recall that an $n$-equivalence is a map $f \colon X \to Y$ for which the induced map $\pi_i(f)$ on homotopy groups is an isomorphism for $i < n$ and an epimorphism for $i=n$.
}
For example, with this convention, the fibrations in $\mathcal{K}_G$ are the $G$-equivariant fibrations rather than the $G$-fibrations.

Fix a model for $\E G$ such that the projection $\E G\to \B G=\E G/G$ is a numerable principal $G$-bundle. Then the \emph{homotopy quotient} of a $G$-space $X$ is defined by 
\[
    X\sslash G \coloneq \E G \times_G X,
\]
where on the right we take the quotient by the diagonal action on $\E G \times X$.
It comes equipped with a fibration $p_X \colon X \sslash G\to \B G$ induced by collapsing $X$ to a point.
If $X$ has a $G$-fixed point $x_0$, then $p_X$ admits a section $s_X \colon \B G\to X\sslash G$ given by $s_X[e]=[e, x_0]$.
These constructions are functorial, thus we may regard $(\placeholder)\sslash G$ as a functor $\mathcal{K}_G \to \mathcal{K}$, or $\mathcal{K}_G\to\mathcal{K}_{/\B G}$, or $\mathcal{K}_G^*\to\mathcal{K}^*_{/\B G}$.

\begin{lemma}\label{lem:propsHQ}
    The homotopy quotient functor has the following properties:
    \begin{enumerate}[label=\upshape{(\roman*)}]
       \item $(\placeholder)\sslash G \colon \mathcal{K}_G\to\mathcal{K}$ admits a right adjoint, in particular preserves pushouts;
       \item $(\placeholder)\sslash G \colon \mathcal{K}_G\to\mathcal{K}_{/\B G}$ admits a left adjoint, in particular preserves pullbacks;
       \item $(\placeholder)\sslash G \colon \mathcal{K}^*_G\to\mathcal{K}^*_{/\B G}$ admits a left adjoint, in particular preserves pullbacks.
       \item $(\placeholder)\sslash G$ preserves fibrations, $n$-equivalences and weak equivalences.
    \end{enumerate}
\end{lemma}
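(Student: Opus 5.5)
For (i) I will write the right adjoint explicitly. For a space $Z$, let $R(Z)\coloneq \mathrm{Map}(\E G, Z)$, with $G$ acting by $(g\cdot\phi)(e)=\phi(e g)$ (or with inverses as needed to get a left action). A map $\E G\times_G X\to Z$ is the same as a $G$-invariant map $\E G\times X\to Z$. Since we work in compactly generated spaces, the exponential law turns this into a map $X\to \mathrm{Map}(\E G,Z)$. The invariance condition then says precisely that this adjoint map is $G$-equivariant. Naturality in $X$ and $Z$ is clear, and a left adjoint preserves all colimits, in particular pushouts.

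For (ii) I will construct the left adjoint $L\colon \mathcal{K}_{/\B G}\to\mathcal{K}_G$ by pulling back the principal bundle: $L(Y\xrightarrow{q}\B G)\coloneq q^*\E G = Y\times_{\B G}\E G$, with $G$ acting on the $\E G$ factor. The adjunction to check is
\[
\mathcal{K}_G(q^*\E G, X)\cong \mathcal{K}_{/\B G}(Y, \E G\times_G X).
\]
Given an equivariant $\psi\colon q^*\E G\to X$, send $y$ to $[e,\psi(y,e)]$ for any $e$ over $q(y)$. This is well defined because $[eg,\psi(y,eg)]=[eg,g^{-1}\psi(y,e)]=[e,\psi(y,e)]$, with the action conventions fixed appropriately. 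Conversely, a map $Y\to \E G\times_G X$ over $\B G$ determines $\psi(y,e)$ as the unique $x$ with $[e,x]$ equal to the image of $y$. Uniqueness holds because the action on $\E G$ is free.

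The main technical point is continuity of this inverse. I would use local triviality of the numerable principal bundle: over a trivialising open $U$ we have $\E G\times_G X|_U\cong U\times X$, so $\psi$ is locally a composite of continuous maps. Right adjoints preserve limits, and a pullback in the slice category is the ordinary fibre product, so pullbacks are preserved. For (iii) I would use the same construction with a basepoint adjoined. Take $L(Y)\coloneq q^*\E G/ s^*\E G$, collapsing the $G$-subspace lying over the section $s(\B G)$ to the fixed basepoint, that is, the pushout of $q^*\E G\leftarrow \E G\to *$. Equivariant based maps out of this correspond to equivariant maps $q^*\E G\to X$ sending $s^*\E G\cong \E G$ to $x_0$, which under (ii) correspond to section-preserving maps over $\B G$.

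For (iv) I use the fibre bundle $X\to X\sslash G\to \B G$ with fibre $X$, which is locally trivial over the numerable cover. A $G$-map $f\colon X\to X'$ induces a map of these bundles over $\B G$. By the five lemma applied to the long exact sequences of homotopy groups (with some care at $\pi_0$ and $\pi_1$, and over every basepoint), $f\sslash G$ is an $n$-equivalence, respectively a weak equivalence, whenever $f$ is.

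For fibrations, let $f\colon X\to X'$ be an equivariant Hurewicz fibration. Over a trivialising open $U\subseteq \B G$ the induced map is $\id_U\times f$, which is a fibration. Since the cover is numerable, Dold's theorem on local fibrations (fibrations over numerable covers are fibrations) shows that $f\sslash G$ is a fibration. I expect this to be the step needing the most care. The subtle point is that $\E G\times X\to\E G\times X'$ must descend along the quotient, and it is local triviality, not only equivariance, that makes this work.
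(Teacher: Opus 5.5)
Your proposal is correct and follows essentially the same route as the paper. In (i) you use the right adjoint $\operatorname{Map}(\E G,-)$, which the paper obtains as the composite of the right adjoints of $\E G\times(-)$ and $(-)/G$. In (ii) and (iii) your left adjoints, the pullback $q^*\E G$ and its quotient by $s^*\E G$, are the paper's $A$ and $A^*$. In (iv) you use the same five-lemma argument on the long exact sequences of $X\to X\sslash G\to\B G$.

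The only place you go beyond the paper is the fibration claim in (iv), which the paper delegates to Ebert's MathOverflow answer. Your argument via local triviality and Dold's theorem makes that step self-contained, provided Dold's theorem is applied to the numerable cover $\{p_{X'}^{-1}(U)\}$ of $X'\sslash G$; over this cover $f\sslash G$ restricts to $\id_U\times f$.
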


\begin{proof} 
    \leavevmode
    \begin{enumerate}[(i)]
        \item $(\placeholder)\sslash G \colon \mathcal{K}_G\to\mathcal{K}$ is the composition of the functor $\E G \times (\placeholder) \colon \mathcal{K}_G\to \mathcal{K}_G$ and the strict orbits functor $(\placeholder)/G \colon \mathcal{K}_G\to\mathcal{K}$.
        The former admits a right adjoint
        \[
            \operatorname{Map}(\E G,\placeholder):\mathcal{K}_G \to \mathcal{K}_G,
        \]
        where we take the space of not necessarily equivariant maps endowed with the compact-open topology and the conjugation $G$-action.
        The strict orbits functor admits a right adjoint given by
        \[
            (\placeholder)_{\operatorname{triv}} \colon \mathcal{K}\to \mathcal{K}_G,
        \]
        which endows a space with the trivial $G$-action.
        \item Following Dror--Dwyer--Kan \cite[Proposition 2.3]{DrorDwyerKan80} (who work in the simplicial setting), we define a functor $A \colon \mathcal{K}_{/\B G}\to\mathcal{K}_G$ as follows.
        Given a parametrised space $(X,p_X)$ over $\B G$, let $A(X)$ be the total space of the principal $G$-bundle over $X$ obtained by pulling back $\E G\to \B G$ via the map $p_X \colon X\to \B G$. Functoriality of $A$ follows from the functoriality of pullbacks.
        The adjunction map
        \[
            \mathcal{K}_G(A(X),Y) \to \mathcal{K}_{/\B G}(X,Y\sslash G)
        \]
        is described as follows: given a $G$-map $\varphi \colon A(X) \to Y$, we obtain a $G$-map
        \[
            (\varphi,\bar{p}_X) \colon A(X)\to \E G \times Y,
        \]
        where $\bar{p}_X \colon p_X^* \E G \to \E G$ is the corresponding map covering $p_X$.
        After taking quotients with $G$, this gives a map $X = A(X)/G\to Y\sslash G$ of spaces over $\B G$.
        We omit the rest of the details.
        \item We define $A^* \colon \mathcal{K}^*_{/\B G}\to\mathcal{K}^*_G$ by a slight modification of the above.
        Given a retractive space $(X,p_X,s_X)$ over $\B G$, we have a double pullback diagram
        \[
        \begin{tikzcd}
            \E G \arrow[r,"\bar{s}_X"] \arrow[d] \arrow[rd, phantom, "\lrcorner", very near start] & A(X) \arrow[r,"\bar{p}_X"] \arrow[d] \arrow[rd, phantom, "\lrcorner", very near start] & \E G \arrow[d] \\
            \B G \arrow[r,"s_X"] & X \arrow[r,"p_X"] & \B G \mathrlap{.}
        \end{tikzcd}
        \]
        Collapsing $\bar{s}_X(\E G)$ to a point, or equivalently taking the pushout of $\ast \leftarrow \E G \stackrel{\bar{s}_X}{\to} A(X)$ in the category of $G$-spaces, we obtain a pointed $G$-space $A(X)_0\in \mathcal{K}^*_G$.
        Now a pointed $G$-map $\varphi_0 \colon A(X)_0\to Y$ corresponds to a $G$-map $\varphi \colon A(X)\to Y$ which maps $\bar{s}_X(EG)$ to the base point, and gives a section-preserving map $X = A(X)/G \to Y\sslash G$.
        The rest of the details are left to the reader.
        \item See \cite{EbertMO} for the fact that $(\placeholder)\sslash G$ is fibration-preserving.
        The rest follows easily from the $5$-lemma applied to the long exact homotopy sequences of the fibrations
        \[
            (\placeholder)\longrightarrow(\placeholder)\sslash G\longrightarrow \B G.
        \]
    \end{enumerate}
    This completes the proof. 
\end{proof}

\subsection{Generalised Eilenberg--MacLane spaces}
\label{subsec:genEMspaces}

Classically, ordinary cohomology with constant coefficients is represented by Eilenberg--MacLane spaces; however, in the presence of a local coefficient system, one must replace the Eilenberg--MacLane space with a retractive space that encodes the coefficient twisting.
We recall the construction here and refer the reader to e.g.~\cite[\S 7]{Gitler63} or \cite{Robinson72} for more details.

Let $A$ be an abelian group and let $G \le \Aut(A)$ be a subgroup.
For $n \ge 1$, we define the \emph{generalised Eilenberg--MacLane space of type $(A,G,n)$} as
\[
    L_G(A,n) \coloneq K(A,n) \sslash G.
\]
Here, $K(A,n)$ is some simplicial model of the Eilenberg--MacLane space that admits an action of $G$ induced by the action on $\pi_n(K(A,n))\cong A$, cf.\ \cite[p.\ 173]{Gitler63}, and $(\placeholder) \sslash G$ is the homotopy quotient functor from Subsection~\ref{subsec:quillen_pair_parametrised_Gspaces}.
Since the canonical basepoint \( * \in K(A,n) \) is preserved by the $G$-action, it follows from the previous discussion that the homotopy fibration \( p \colon L_G(A,n) \to \B G \) admits a canonical section
\[
    \sigma \colon \B G \to L_G(A,n).
\]
As such, the generalised Eilenberg--MacLane space is the retractive space $(L_G(A,n),p, \sigma)$ over~$\B G$.

\begin{definition}[Universal local coefficient system]\label{def:universal_coefficient_system}
    For an abelian group $A$ and subgroup \( G \le \Aut(A) \), there is a \emph{universal local coefficient system} on $\B G$ defined as the balanced product
    \begin{align*}
        \mathcal{L}_{(A,G)} \coloneq \E G \times_{G} A.
    \end{align*}
    Via pullback, see Example~\ref{ex:pullback_system}, the generalised Eilenberg--MacLane space $L_G(A,n)$ admits a universal coefficient system given by $\mathcal{L}_{(A,G)}^n \coloneq p^*\mathcal{L}_{(A,G)}$.
\end{definition}

\begin{remark}\label{rem:F_AG}
    For a fixed abelian group $A$ and subgroup $G \le \Aut(A)$, there is a functor
    \begin{align*}
        F_{(A,G)} &\colon \mathcal{K}_{/\B G} \to \mathcal{K}_{\mathcal{L}}, \\
            & (X,p_X) \mapsto (X; p_X^*\mathcal{L}_{(A,G)}).
    \end{align*}
    Because principal $G$-bundles are classified by maps into $\B G$, the essential image of $F_{(A,G)}$ consists of all local coefficient systems $\AA \to X$ with fibre $A$ and structure group $G \le \Aut(A)$.
\end{remark}

\begin{prop}[\cite{Gitler63, Robinson72}]\label{prop:twisted_representability}
    Let $(X,C, p_X)$ be a parametrised CW pair over $\B G$.
    Denote by $\AA = p_X^* \mathcal{L}_{(A,G)}$ the induced local coefficient system over $X$.
    Then for each $n\geq 1$, there exists a fundamental class $e_n \in H^n(L_G(A,n),\sigma(\B G);\mathcal{L}_{(A,G)}^n)$ such that the map
    \begin{align*}
        [(X,C),L_G(A,n)]^*_{\B G} &\longrightarrow H^n(X,C;\AA), \\
        f &\longmapsto f^*e_n
    \end{align*}
    is a natural bijection.
\end{prop}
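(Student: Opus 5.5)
We plan to reduce the statement to the classical representability of ordinary cohomology with local coefficients by equivariant cohomology of a covering space, using the adjunction of Lemma~\ref{lem:propsHQ}(iii). For a retractive CW pair $(X,C)$ over $\B G$, let $\tilde X = A(X) = p_X^*\E G$ be the associated principal $G$-bundle and $\tilde C$ its restriction to $C$. Together with the section this gives the pointed $G$-space $A^*(X,C)$. By Lemma~\ref{lem:propsHQ}(iii), maps of pairs over $\B G$ from $(X,C)$ into $(L_G(A,n),\sigma(\B G))$ correspond to $G$-maps of pairs $(\tilde X,\tilde C)\to (K(A,n),*)$. Since $\E G \times (\placeholder)$ respects cylinders, the same holds for homotopies. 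So we first establish a natural bijection
\[
[(X,C),L_G(A,n)]^*_{\B G}\cong [(\tilde X,\tilde C),(K(A,n),*)]_G .
\]

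Next we identify the right-hand side with equivariant cohomology. Here $\tilde X$ is a free $G$-CW complex, and we take $G$ discrete, which is the case of interest, e.g.\ $G\le\Aut(\ZZ)$. Equivariant obstruction theory, cell by cell on the free cells $G\times D^m$, then gives
\[
[(\tilde X,\tilde C),(K(A,n),*)]_G\cong H^n_G(\tilde X,\tilde C;A).
\]
This is the cohomology of the cochain complex $\Hom_{\ZZ G}(C_*(\tilde X,\tilde C),A)$, where $G$ acts on $A$ through the inclusion $G\le\Aut(A)$. The isomorphism uses that the $G$-action on $K(A,n)$ realises the given action on $\pi_n=A$. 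Concretely, it is Gitler's simplicial model, in which $K(A,n)$ is the simplicial abelian group $\Hom(C_*(\Delta^\bullet),A)$ and $G$ acts through $A$. With that model, a simplicial $G$-map $\operatorname{Sing}\tilde X\to K(A,n)$ is literally a $G$-equivariant $n$-cocycle, and homotopies correspond to coboundaries. This makes the bijection essentially tautological once we pass to singular complexes and use CW approximation.

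Finally we identify $H^n_G(\tilde X,\tilde C;A)$ with $H^n(X,C;\AA)$ for $\AA=p_X^*\mathcal L_{(A,G)}=\tilde X\times_G A$. This is the standard description of twisted cohomology via equivariant cochains on a covering (Hatcher \S3.H), applied to the possibly disconnected cover $\tilde X\to X$. A section of $\AA$ over a simplex is the same as an equivariant assignment on its lifts.

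Applying the whole chain to $(X,C)=(L_G(A,n),\sigma(\B G))$ and the identity map defines $e_n$. Naturality of each step then shows that the composite bijection is $f\mapsto f^*e_n$, by the Yoneda argument. The main obstacle is the second step, namely that equivariant homotopy classes into $K(A,n)$ compute $\ZZ G$-cochain cohomology. We will handle it with the simplicial model above, so that no genuine obstruction theory is needed beyond the fact that $\operatorname{Sing}$ and geometric realisation preserve $G$-homotopy classes for free $G$-CW complexes.
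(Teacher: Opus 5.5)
The paper gives no proof of its own here; it quotes the result from Gitler and Robinson. Your argument is correct, and it is in substance the standard proof behind those citations, in particular Gitler's semi-simplicial one. The steps are as follows.
\begin{itemize}
\item The adjunction of Lemma~\ref{lem:propsHQ} turns maps of pairs over $\B G$ into $G$-maps of pairs on the pulled-back cover $\tilde X=p_X^*\E G$. Homotopies correspond as well, because pulling back $\E G$ commutes with $\placeholder\times[0,1]$.
\item $G$-maps from the free $G$-CW pair $(\tilde X,\tilde C)$ into $(K(A,n),*)$ are classified by $\Hom_{\ZZ G}$-cochain cohomology.
\item Equivariant cochains on the possibly disconnected cover are the same thing as cochains with coefficients in $\AA=\tilde X\times_G A$.
\end{itemize}
Compared with the paper, this gives a self-contained proof using only Lemma~\ref{lem:propsHQ} and standard simplicial homotopy theory. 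The price is the comparison $[(\tilde X,\tilde C),(|K|,*)]_G\cong[(\mathrm{Sing}\,\tilde X,\mathrm{Sing}\,\tilde C),(K,*)]_G$. As you indicate, this holds because $\mathrm{Sing}\,\tilde X$ is a free simplicial $G$-set, hence cofibrant in the projective model structure, and $K$ is Kan.

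Two small points need tidying.

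First, the simplicial model is misstated. The simplicial abelian group you want is $K(A,n)_q=Z^n(\Delta^q;A)$, the normalised $n$-cocycles on $\Delta^q$, not $\Hom(C_*(\Delta^\bullet),A)$. The group $C^n(\Delta^\bullet;A)$ of all $n$-cochains is contractible, and maps into it are arbitrary cochains. With the correct model:
\begin{itemize}
\item simplicial maps $K\to K(A,n)$ are exactly normalised $n$-cocycles on $K$;
\item $G$-maps are exactly the $G$-equivariant ones;
\item a $G$-homotopy $K\times\Delta^1\to K(A,n)$ yields an equivariant coboundary, via the natural and hence $G$-equivariant prism chain homotopy;
\item conversely, an equivariant coboundary $\delta b$ yields a $G$-homotopy via the cocycle $\pi^*c_0+\delta\tilde b$ on $K\times\Delta^1$, where $\tilde b$ is $b$ extended by zero.
\end{itemize}
This is what your ``tautological'' claim needs.

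Second, the Yoneda step requires the natural transformation to be defined on the universal pair $(L_G(A,n),\sigma(\B G))$, which need not be one of the CW pairs in the statement. This is harmless. Its pulled-back cover is $\E G\times K(A,n)$, a free $G$-space, and a free $G$-CW complex if you choose $\E G$ to be one. The identity of $L_G(A,n)$ corresponds to the projection $\E G\times K(A,n)\to K(A,n)$, so $e_n$ is the class determined by that projection.
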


\begin{notate}\label{notate:L_A}
    If the subgroup $G \le \Aut(A)$ is clear, we may omit $G$ from our notation entirely, that is we may write $\mathcal{L}_A$, $\mathcal{L}_A^n$ or $L(A,n)$ instead.
\end{notate}

\section{Twisted Thom theory and the twisted Thom space}\label{sec:twisted_Thom_theory}

In this section, we recall orientation systems, the twisted Thom isomorphism theorem, and twisted Umkehr homomorphisms.
Thereafter, we introduce the new twisted Thom space and its twisted Thom class. 

\begin{notate}[Normal bundle of an embedding]\label{notate:normal_bundle}
In the following, if $\iota \colon M \to X$ is an embedding of smooth manifolds, we denote its normal bundle by $\nu\iota$ or $\nu M$.
\end{notate}

\subsection{Orientation systems}

Throughout the article, we will frequently work with the following local coefficient systems defined for manifolds and vector bundles.
The next definition is standard.

\begin{definition}[Orientation local system of a manifold]\label{def:orientation_system_manifold} 
    Let $X$ be a connected $n$-manifold. The \emph{orientation local system} $\Or_X$ of $X$ is described as follows.
    Given an open set $V\subseteq X$ and $x\in V$, let $j_x:(X,X-V)\hookrightarrow (X,X-x)$ be the inclusion.
    The total space of $\Or_X$ is the union $\bigsqcup_{x\in X} H_n(X,X-x;\ZZ)$ of local homology groups, topologised by taking as a basis the sets $$U_\alpha=\{(j_x)_*(\alpha) \in H_n(X,X-x;\ZZ)\mid x\in U\}$$ for each open set $U\subseteq X$ and $\alpha\in H_n(X,X-U;\ZZ)$.
    The obvious projection $\Or_X\to X$ is then a fibre bundle with fibre $\ZZ$, a global section of which is an orientation of $X$.

    One can alternatively view $\Or_X$ as the locally constant sheaf associated to the presheaf given by $U\mapsto H_n(X,X-U;\ZZ)$, see \cite[VI.1, Example 5]{Whitehead79}.
\end{definition}

The next definition introduces orientation local systems of vector bundles and is less frequently used in the literature. Before that, however, we establish the following notation.

\begin{notate}[Zero section of a vector bundle]\label{notate:zero_section}
    If $E\to X$ is a vector bundle over a topological space $X \in \mathcal{K}$, then we denote the zero section by $z_E \colon X \to E$; although, we will frequently omit the subscript and simply write $z \colon X \to E$ when the context is clear.  
\end{notate}

\begin{definition}[Orientation local system of a vector bundle] \label{def:orientation_system_bundle}
    Let $E\to X$ be a rank $n>0$ vector bundle over a locally contractible space~$X$.
    For each $x\in X$ the relative cohomology of the fibre $H^n(E_x,E_x-z(x);\ZZ)$ is infinite cyclic.
    Put a topology on the union $\Or_E\coloneq\bigsqcup_{x\in X} H^n(E_x,E_x-z(x);\ZZ)$ by taking as basic opens the sets
    \[
        U_\alpha = \{ (j_x)^*(\alpha) \in H^n(E_x,E_x-z(x);\ZZ)\mid x\in U\}
    \]
    for each open set $U\subseteq X$ and cohomology class $\alpha \in H^n\!\left(E_U,E_U-z(U);\ZZ\right)$.
    Here
    \[
        j_x \colon (E_x,E_x-z(x))\hookrightarrow \left(E_U,E_U-z(U)\right)
    \]
    is the inclusion of the fibre over $x$.
    We then call the obvious projection $\Or_E\to X$ the \emph{orientation local system of $E$}.
\end{definition}

\begin{remark}\label{rmk:local_thom_class}
    The following useful observations will be freely used in the sequel. 
    Suppose that
    \[
        \varphi \colon E_U \stackrel{\cong}\longrightarrow U \times \RR^n
    \]
    is a local trivialisation over some open neighbourhood $U \subset X$.
    Let
    \[
        u_{\RR^n}\in H^n(\RR^n,\RR^n-\{0\})
    \]
    denote the generator determined by the standard orientation of $\RR^n$.
    Then: 
    \begin{enumerate}[(a)]
        \item the trivialisation $\varphi$ determines a class
        \[
            u^\varphi_U \coloneq \varphi^* \pr_2^* (u_{\RR^n}) \in H^n(E_U, E_U - z(U)),
        \]
        which, by definition, restricts to generators
        \[
            u_x^\varphi \coloneq j_x^* u_U^\varphi \in H^n(E_x, E_x - z(x)) \cong H^n(\RR^n, \RR^n - \{0\})
        \]
        for each $x \in U$;
        \item if $U$ is contractible, the restriction homomorphism
        \[
            j_x^* \colon H^n(E_U, E_U - z(U)) \to H^n(E_x, E_x - z(x))
        \]
        is an isomorphism for each $x \in U$ by homotopy invariance;
        \item if $U$ is contractible, the class $u^\varphi_U \in H^n(E_U, E_U - z(U))$ in Observation\,(a) is a generator according to Observation\,(b); and 
        \item if $\varphi' \colon E_U \to U \times \RR^n$ is another trivialisation with transition map
        \[
            A \colon U \to \operatorname{GL}(\RR^n),
        \]
        then we have that
        \[
            u_x^{\varphi'} = \operatorname{sign}\det(A(x))\, u_x^\varphi
        \]
        for each $x \in U$.
    \end{enumerate}
\end{remark}

Next we collect several handy canonical identifications involving the orientation local systems described in Definition~\ref{def:orientation_system_manifold} and Definition~\ref{def:orientation_system_bundle} above.
Although the existence of the following isomorphisms is easily believed, we will use the fact that there are no choices involved during the slightly technical derivations carried out in Section \ref{subsec:twisted_Umkehr}.

\begin{lemma} \label{lemma:prop_loc_sys}
    Let $E\to X$ be a vector bundle of rank $n >0$ over a locally contractible space~$X$.
    \begin{enumerate}[label=\upshape{(\roman*)}]
        \item If $\det E = \bwedge^n\! E$ is the determinant line bundle of $E$, then there is an isomorphism $$\Or_E \cong \Or_{\det E};$$ 
        \item If $f \colon Y\to X$ is a continuous map, then there is an isomorphism $$f^*\Or_E\cong \Or_{f^*E};$$
        \item If $F\to X$ is a second vector bundle of rank $\ell$, then there is an isomorphism $$\Or_E\otimes \Or_F\cong \Or_{E\oplus F};$$ 
        \item If $X$ is a connected $n$-manifold with tangent bundle $TX\to X$, then there is an isomorphism $$\Or_X\cong \Or_{TX}.$$
    \end{enumerate}
    Moreover, all isomorphisms \emph{(i)--(iv)} above are canonical.
\end{lemma}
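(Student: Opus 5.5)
The approach is to build each isomorphism fibrewise from a canonical isomorphism of infinite cyclic groups. Then we check, using local trivialisations and Remark~\ref{rmk:local_thom_class}, that these fibrewise maps are continuous, i.e.\ that they carry basic open sets $U_\alpha$ to basic open sets. Since both sides are covering spaces with discrete fibre $\ZZ$, a fibrewise group isomorphism that is locally constant in such trivialisations is automatically an isomorphism of local systems. No choices enter, because each fibrewise map is defined without reference to a trivialisation; trivialisations are used only to verify continuity.

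For (i), fix $x$ and pick any basis $e_1,\dots,e_n$ of $E_x$. This gives an isomorphism $E_x\cong\RR^n$ and hence a generator $u_x$ of $H^n(E_x,E_x-0)$. Send it to the generator of $H^1(\det E_x,\det E_x-0)$ determined by the orientation of the line $\det E_x$ containing $e_1\wedge\dots\wedge e_n$. Changing basis by $A$ multiplies both generators by $\operatorname{sign}\det A$, by Remark~\ref{rmk:local_thom_class}(d). So the map is well defined, and it is continuous because a local frame of $E_U$ induces compatible local classes $u^\varphi_U$ on both sides.

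For (ii), the fibre of $f^*E$ over $y$ is literally $E_{f(y)}$, so the fibres coincide. Continuity follows by pulling back local classes $\alpha\in H^n(E_U,E_U-z(U))$ along the bundle map $f^*E|_{f^{-1}U}\to E_U$.

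For (iii), use the relative cross product
\[
H^n(E_x,E_x-0)\otimes H^\ell(F_x,F_x-0)\to H^{n+\ell}(E_x\oplus F_x,(E_x\oplus F_x)-0).
\]
This is an isomorphism of infinite cyclic groups by the Künneth theorem, since $(E_x\times F_x)-0 = (E_x-0)\times F_x\cup E_x\times(F_x-0)$. It matches $u^\varphi_x\times u^\psi_x$ with $u^{\varphi\oplus\psi}_x$, and the sign rule in Remark~\ref{rmk:local_thom_class}(d) is multiplicative under block sums, so the map is compatible with transition functions. Naturality of the cross product over $U$ gives continuity.

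For (iv), the two systems are of different kinds: $\Or_X$ is built from the local homology $H_n(X,X-x)$, whereas $\Or_{TX}$ is built from the cohomology $H^n(T_xX,T_xX-0)$. To relate them, use the exponential map, or any chart, to identify a neighbourhood of $0$ in $T_xX$ with a neighbourhood of $x$. Excision then gives $H_n(X,X-x)\cong H_n(T_xX,T_xX-0)$, and the evaluation pairing identifies $H_n(T_xX,T_xX-0)$ with the dual of $H^n(T_xX,T_xX-0)$. Finally, a rank-one free group is canonically identified with its dual, via the generators $\pm g\leftrightarrow \pm g^\vee$. The two things to check are independence of the chosen local diffeomorphism, since the derivative at $x$ is the identity on $T_xX$ and any two choices are isotopic near $x$ through maps with that derivative, and continuity. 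I expect (iv) to be the main obstacle, precisely because it mixes homology and cohomology and requires showing that the excision identification is genuinely canonical and varies continuously. I would handle continuity by using a tubular-neighbourhood version of the exponential map over a chart $U$, identifying $(TX|_U, TX|_U - z(U))$ fibrewise with neighbourhoods of the diagonal in $U\times X$.
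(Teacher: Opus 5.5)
Your proposal is correct and follows the paper's proof in substance. In (i)--(iii) the isomorphisms are built from the local generators $u^\varphi_U$, with independence of choices coming from the sign of the transition determinant. In (iv), your evaluation pairing followed by the canonical identification of an infinite cyclic group with its dual is exactly the paper's isomorphism $\Or_{TX}\otimes\Or_X\cong\underline{\ZZ}$ (given by $u^{\mathrm d\varphi}_U\otimes\theta^\varphi_U\mapsto 1$), followed by tensoring with $\Or_{TX}$.

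The only real difference is that the paper avoids the exponential map and diagonal-neighbourhood machinery in (iv). A single chart $\varphi$ on a ball $U$ gives both generators $\theta^\varphi_U$ and $u^{\mathrm d\varphi}_U$ at once, and a coordinate change multiplies both by the same Jacobian sign. That settles chart-independence and continuity together, so your planned continuity argument is more than you need.
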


\begin{proof}
    Throughout the proof, we omit untwisted integer coefficients from the notation for the sake of convenience.
    Since $X$ is locally contractible, for each $x_0 \in X$, there exists a contractible open neighbourhood $U \subset X$ of $x_0$.
    Over such a contractible neighbourhood $U$, we construct each isomorphism locally using the generators $u_U^\varphi$ for a given trivialisation $\varphi \colon E_U \to U \times \RR^n$ as discussed in Remark~\ref{rmk:local_thom_class}.
    \begin{enumerate}[(i)]
        \item If $\varphi \colon E_U \to U \times \RR^n$ is a trivialisation over a contractible neighbourhood $U \subset X$, then there is an induced trivialisation
        \[
            \widetilde{\varphi} \colon \det E_U \to U \times \RR
        \]
        given by the $n$-th exterior power.
        Consequently, there is an isomorphism
        \begin{align*}
            H^n(E_U, E_U - z(U)) &\stackrel{\cong}\longrightarrow H^1(\det E_U, \det E_U - z(U)),\\
                u_U^\varphi &\longmapsto u_U^{\widetilde{\varphi}},
        \end{align*}
        which does not depend on the choice of trivialisation.
        This yields the desired canonical isomorphism $\Or_E \cong \Or_{\det E}$.
        \item This follows from Definition~\ref{def:orientation_system_bundle} and the definition of the pullback.
        \item Over a contractible neighbourhood $U$, let $\varphi \colon E_U \to U \times \RR^n$ and $\psi \colon F_U \to U \times \RR^\ell$ be local trivialisations of $E$ and $F$, respectively.
        Then there is an induced trivialisation
        \[
            \varphi \oplus \psi \colon (E \oplus F)_U \to U \times (\RR^{n} \oplus \RR^{\ell}) \cong U \times \RR^{n+\ell}
        \]
        of the Whitney sum.
        The desired isomorphism is then given by
        \begin{align*}
            H^n(E_U, E_U - z_E(U)) \otimes H^\ell(F_U, F_U-z_F(U)) &\stackrel{\cong}\longrightarrow H^{n+\ell}((E \oplus F)_U, (E \oplus F)_U - z_{E\oplus F}(U)), \\
            u_U^\varphi \otimes u_U^\psi &\longmapsto u_U^{\varphi \oplus \psi}.
        \end{align*}
        A direct check shows that the isomorphism does not depend on the choice of trivialisations,
        which establishes that $\Or_E\otimes \Or_F\cong \Or_{E\oplus F}$, as required.
        \item Let $V \subset X$ be a coordinate neighbourhood of $X$ with coordinate chart
        \(
            \varphi \colon V \longrightarrow \RR^n.
        \)
        Further let $B \subset \varphi(V) \subset \RR^n$ be an open ball with $\overline{B} \subset \varphi(V)$ so that the pre-image $U \coloneq \varphi^{-1}(B)$ is a contractible neighbourhood in $X$.
        Under these assumptions, we see (after two applications of the excision isomorphism) that the chart $\varphi$ determines a generator
        \[
            \theta_U^\varphi \in H_n(X, X-U) \cong H_n(V, V-U) \cong H_n(\RR^n, \RR^n -B) \cong \ZZ
        \]
        that restricts to generators
        \[
            \theta_x^\varphi \coloneq (j_x)_* \theta_U^\varphi \in H_n(X, X-x),
        \]
        for any $x \in U$, where $(j_x)_*$ is an isomorphism.
        Moreover, the differential of $\varphi$ induces a trivialisation
        \[
            \mathrm d\varphi \colon TX_U \stackrel{\cong}\longrightarrow U\times \RR^n
        \]
        and thereby a generator
        \[
            u_U^{\mathrm d\varphi} \in H^n(TX_U,TX_U-z(U)).
        \]
        Combining the above, we obtain the perfect pairing
        \begin{align*}
            H^n(TX_U, TX_U - z(U)) \otimes H_n(X, X-U) &\longrightarrow \ZZ, \\
            u^{\mathrm{d}\varphi}_U \otimes \theta_{U}^\varphi &\longmapsto 1,
        \end{align*}
        which, since a change of coordinates multiplies both $\theta_{U}^\varphi$ and $u_U^{\mathrm d\varphi}$ by the same sign, is independent of the choice of chart $\varphi$.
        Furthermore, considering that 
        \[
            (j_x)_* \theta_U^\varphi = \theta_x^\varphi \quad \text{and} \quad j_x^* u_U^{\mathrm d\varphi} = u_x^{\mathrm d\varphi},
        \] these local pairings glue to a continuous global isomorphism and subsequently produce a well-defined isomorphism
        \begin{equation}\label{eq:TXxX=Z}
            \Or_{TX}\otimes \Or_X \cong \underline{\ZZ}.
        \end{equation}
        Finally, tensoring \eqref{eq:TXxX=Z} with $\Or_{TX}$ establishes the claimed canonical isomorphism identifying \(\Or_{X}\) and \(\Or_{TX}.\)
    \end{enumerate}
    This finishes the proof.
\end{proof}

\begin{remark}\label{rmk:universal_orientation_system}
    Using orientation systems, we can view the universal local coefficient system $\mathcal{L}_\ZZ$ for $A=\ZZ$ and structure group $G=\ZZ_2$, as defined in Definition~\ref{def:universal_coefficient_system}, in purely geometric terms.
    The classifying space $\B\ZZ_2$ is homotopy equivalent to $\BO(1)$, which admits a universal line bundle $\gamma_1$.
    As such, we can identify $\mathcal{L}_\ZZ$ with the \emph{universal orientation system} $\Or_{\gamma_1}$.
\end{remark}

\subsection{The twisted Thom isomorphism}
\label{subsec:twistedThomisos}

Let $\pi \colon E\to X$ be a vector bundle of rank $n$ over a paracompact space $X$, with zero section $z \colon X\to E$.
Recall that the orientation local system $\Or_E\to X$ has fibres $(\Or_E)_x\cong H^n(E_x,E_x-z(x);\ZZ)\cong \mathbb{Z}$.
\begin{definition}[Twisted Thom class] \label{defn:twisted_thom_class}
    A \emph{twisted Thom class} for $E$ is a cohomology class
    \[
        u_E\in H^n(E,E-z(X);\pi^*\Or_E)
    \]
    such that for each fibre inclusion $j_x \colon (E_x,E_x-z(x))\hookrightarrow (E,E-z(X))$ the restriction $(j_x)^*(u_E)$ is a generator of
    \begin{align*}
    H^n(E_x,E_x-z(x);j_x^*\pi^*\Or_E) & = H^n(E_x,E_x-z(x);(\Or_E)_x)\\
    & \cong H^n(E_x,E_x-z(x);H^n(E_x,E_x-z(x);\ZZ)).
    \end{align*}
\end{definition}

According to Thom \cite{Thom52} such a Thom class always exists. Indeed, note that, by the Universal Coefficient Theorem, the latter group is isomorphic to 
\begin{multline*}
    \Hom(H_n(E_x,E_x-z(x);\ZZ),\Hom(H_n(E_x,E_x-z(x);\ZZ),\ZZ)) \\
     \cong \Hom (H_n(E_x,E_x-z(x);\ZZ)\otimes H_n(E_x,E_x-z(x);\ZZ), \ZZ),
\end{multline*}
hence has a preferred generator given by the canonical isomorphism $a\otimes a\mapsto 1$ for any choice of generator $a\in H_n(E_x,E_x-z(x);\ZZ)\cong \ZZ$.
Altogether, there is a canonical choice of Thom class $u_E$, which we refer to as \underline{the} twisted Thom class.

Given a local coefficient system $\BB$ on $X$, Thom \cite{Thom52} has shown that the homomorphism
\begin{equation}\label{eq:twisted_thom_iso_I}
    \varphi^* \colon H^*(X;\BB) \to H^{*+n}(E,E-z(X);\pi^*\BB\otimes \pi^*\Or_E)
\end{equation}
given by $\varphi^*(\alpha)=\pi^*(\alpha)\cup u_E$ is an isomorphism, the so-called \emph{twisted Thom isomorphism}. Using the canonical isomorphism $\pi^*\Or_E\otimes \pi^*\Or_E\cong\underline{\ZZ}$ one also deduces an isomorphism
\begin{equation}\label{eq:twisted_thom_iso_II}
    \varphi^* \colon H^*(X;\BB\otimes\Or_E) \stackrel{\cong}{\longrightarrow} H^{*+n}(E,E-z(X);\pi^*\BB).
\end{equation}
The twisted Thom class and the twisted Thom isomorphism have the expected naturality properties with respect to maps of vector bundles.

\subsection{Twisted Umkehr homomorphisms}
\label{subsec:twisted_Umkehr}

We now define an Umkehr map in cohomology with local coefficients and prove a push-pull formula that can be used to derive our main result.
To this end, let $X$ be a compact manifold, and let $\iota \colon M\hookrightarrow X$ be an embedding of codimension $n$ such that the image $\iota(M)$ is contained in the interior of $X$; and let $\AA$ be an arbitrary local coefficient system over $X$.
The following is a simplification of the definition in Section~2 of~\cite{OhmotoSaekiSakuma03}, which treats arbitrary smooth maps.

\begin{definition}\label{defn:Umkehr_map} Let \(\iota \colon M \xhookrightarrow{} X\) and \(\mathcal{A}\) be as above. Then the \emph{Umkehr map}
    \[
        \iota^! \colon H^*(M;\iota^*\AA\otimes\mathcal{O}_{\nu\iota})\to H^{*+n}(X,\partial X;\AA)
    \]
    is defined as follows.
    Choose a tubular neighbourhood $U$ of $M$ in $X$ which is identified with the total space of $\nu\iota \cong U$.
    Let $p_U \colon U\to M$ be the projection, and $\jmath_U \colon U\hookrightarrow X$ the inclusion.
    Note that the radial homotopy $\iota\circ p_U \simeq \jmath_U$  induces a canonical isomorphism $p_U^*\iota^*\AA\cong \jmath_U^*\AA$.
    Then $\iota^!$ is the composition
    \begin{align*}
        H^*(M;\iota^*\AA\otimes\mathcal{O}_{\nu\iota}) & \stackrel{\varphi^*}{\;\longrightarrow\;} H^{*+n}(U,U-M;p_U^*\iota^*\AA)\\
        & \stackrel{\cong}{\;\longrightarrow\;}  H^{*+n}(U,U-M;\jmath_U^*\AA) \\
        & \stackrel{(\jmath_U^*)^{-1}}{\;\longrightarrow\;} H^{*+n}(X,X-M;\AA) \\
        & \;\longrightarrow\; H^{*+n}(X,\partial X;\AA).
    \end{align*}
    Here, the first map is the twisted Thom isomorphism, the map $\jmath_U^*$ is an isomorphism by excision, and the unlabelled map is induced by an inclusion.%
    \footnote{
        Note that the definition of the Umkehr map does not depend on the choice of tubular neighbourhood since such neighbourhoods are unique up to ambient isotopy, see e.g.~\cite[Theorem 7.4.4, p.\ 213]{Mukherjee15}.
    }
\end{definition}

\begin{lemma}\label{lem:ThomUmkehr}
    Let $\pi \colon E\to M$ be a vector bundle of rank $n$ over a closed manifold $M$, and let $z\colon M\to D(E)$ denote the zero section of its disc bundle with respect to some Riemannian metric. Let $1 \in H^0(M;\mathbb{Z})\cong H^0(M;z^*\pi^*\Or_E\otimes \Or_E)$ denote the unit element. Then
    \[
        z^!(1) \in H^n(D(E),S(E);\pi^*\Or_{E}),
    \]
    corresponds to the twisted Thom class $u_E\in H^n(E,E-z(M);\pi^*\Or_E)$ under the isomorphism induced by the inclusion $(D(E),S(E))\hookrightarrow (E,E-z(M))$.
\end{lemma}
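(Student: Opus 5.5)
We unwind Definition~\ref{defn:Umkehr_map} in the special case $X = D(E)$, $\iota = z$, $\AA = \pi^*\Or_E$ (restricted to $D(E)$), and $\partial X = S(E)$. The normal bundle $\nu z$ of the zero section in $D(E)$ is canonically identified with $E$ itself, via the vertical tangent spaces. Consequently $\Or_{\nu z}\cong\Or_E$, and $z^*\AA = z^*\pi^*\Or_E = \Or_E$. As tubular neighbourhood we take $U = \operatorname{int} D(E)$, identified with $E$ by a radial diffeomorphism $h\colon E\to U$ that preserves fibres and rays and is the identity near the zero section. By the footnote to Definition~\ref{defn:Umkehr_map}, $z^!$ does not depend on this choice. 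Then $p_U = \pi\circ h^{-1}$, and the radial homotopy $z\circ p_U\simeq \jmath_U$ stays inside fibres. Hence the induced isomorphism $p_U^*z^*\pi^*\Or_E\cong \jmath_U^*\pi^*\Or_E$ is the identity on fibres $(\Or_E)_{\pi(v)}$.

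We then trace $1$ through the composition defining $z^!$.

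\begin{enumerate}[(1)]
\item The first map is the twisted Thom isomorphism. Under the canonical identification $\Or_E\otimes\Or_E\cong\underline{\ZZ}$ of Proposition~\ref{prop:local_systems}(ii), the class $1\in H^0(M;\Or_E\otimes\Or_E)$ is sent to $\pi^*(1)\cup u_E = u_E$. This is the form~\eqref{eq:twisted_thom_iso_II} with $\BB=\Or_E$, which is derived from~\eqref{eq:twisted_thom_iso_I} by exactly that identification. Transported along $h$, this gives $u_{\nu z}\in H^n(U,U-M;p_U^*\Or_E)$.
\item The coefficient isomorphism in the second map acts as the identity, by the fibrewise observation above.
\item The inverse excision $(\jmath_U^*)^{-1}$ produces a class in $H^n(D(E),D(E)-M;\pi^*\Or_E)$ restricting to $h_*u_E$ on $U$.
\item Finally we restrict to $(D(E),S(E))$.
\end{enumerate}

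It remains to compare the result with $u_E$ under $i\colon (D(E),S(E))\hookrightarrow (E,E-z(M))$. The map $i^*$ is an isomorphism, since $S(E)\hookrightarrow E-z(M)$ is a deformation retract. Both classes are twisted Thom classes in the sense of Definition~\ref{defn:twisted_thom_class}, so it suffices to check that they restrict to the same generator on a single fibre over each point of $M$.

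To do this, consider the triple of pairs
\[(D(E),S(E))\to(D(E),D(E)-M)\leftarrow(U,U-M)\cong(E,E-M).\]
All maps here are compatible with the fibre inclusions $j_x$, and all coefficient identifications are fibrewise identities. Hence $j_x^*$ of $z^!(1)$ corresponds to $j_x^*u_E$, which is the canonical generator $a\otimes a\mapsto1$. Alternatively, one can use naturality to see that $i^*$ composed with the excision/restriction maps is induced by an inclusion homotopic, over $M$, to $h$.

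\textbf{Main obstacle.} The most delicate point is keeping track of the canonical coefficient isomorphisms so that no sign appears. Concretely, one needs:
\begin{itemize}
\item $\Or_{\nu z}\cong\Or_E$ via the vertical tangent identification;
\item $1\mapsto$ the canonical generator under $\Or_E\otimes\Or_E\cong\underline\ZZ$;
\item the radial-homotopy isomorphism is the identity.
\end{itemize}
I would handle all three locally using Remark~\ref{rmk:local_thom_class}. Over a contractible $V\subset M$ with trivialisation $\varphi$, the differential of $\varphi$ trivialises $\nu z|_V$ compatibly, so all the identifications send $u^\varphi_V\otimes u^\varphi_V\mapsto1$. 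Independence of the choice of $\varphi$ then follows from Remark~\ref{rmk:local_thom_class}(d), since the sign factors appear squared.
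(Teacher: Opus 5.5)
Your proposal is correct and follows the paper's route: the paper only asserts that the lemma follows immediately from the definition of the twisted Umkehr map, and your unwinding makes that definition-chase explicit (taking $U=\operatorname{int}D(E)$, noting that the radial-homotopy coefficient isomorphism is fibrewise the identity, and comparing the result with $u_E$). Of your two ways of finishing, the second is the cleaner one, since it needs no separate uniqueness argument for Thom classes: it uses that the composite $(E,E-z(M))\to(U,U-M)\hookrightarrow(E,E-z(M))$ is homotopic to the identity through fibre-preserving maps of pairs over $M$.
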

\begin{proof}
    The proof follows immediately from the definition of the twisted Umkehr map.
\end{proof}

\begin{prop}\label{prop:pushpull}
    Consider the following transverse pullback square where $\iota \colon M\hookrightarrow X$ and $\kappa\colon N\hookrightarrow Y$ are embeddings of codimension $n$ of closed manifolds into the interior of compact manifolds, and $f(\partial Y)\subseteq \partial X$:
    \begin{equation}\label{eq:pushpull}
        \begin{tikzcd}
            N \arrow[d,"\kappa"',hook] \arrow[r,"g"] \arrow[rd, phantom, "\lrcorner", very near start] & M \arrow[d,"\iota",hook] \\
            Y \arrow[r,"f"] &  X
            \end{tikzcd}
    \end{equation}
    For any local coefficient system $\AA$ on $X$ and cohomology class $\alpha\in H^*(M;\iota^*\AA\otimes\mathcal{O}_{\nu\iota})$, we have
    \[
        f^*\iota^!(\alpha)=\kappa^! g^*(\alpha) \in H^{*+n}(Y,\partial Y;f^*\AA).
    \]
\end{prop}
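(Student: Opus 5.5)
Here $\iota^*\AA\otimes\Or_{\nu\iota}$ is pulled back along $g$ using the identifications $g^*\iota^*\AA\cong\kappa^*f^*\AA$ (from $\iota g=f\kappa$) and $g^*\Or_{\nu\iota}\cong\Or_{g^*\nu\iota}\cong\Or_{\nu\kappa}$ (Lemma~\ref{lemma:prop_loc_sys}(ii) together with transversality). The plan is to reduce to naturality of the twisted Thom isomorphism by choosing tubular neighbourhoods compatibly. Transversality of the square gives a canonical bundle isomorphism $\nu\kappa\cong g^*\nu\iota$, namely the map $df$ on normal fibres. Choose a tubular neighbourhood $U\cong\nu\iota$ of $M$ in $X$. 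Since the definition of $\iota^!$ and $\kappa^!$ does not depend on the choice of tubular neighbourhood (footnote to Definition~\ref{defn:Umkehr_map}), I will first homotope $f$ rel a neighbourhood of $\partial Y$, through maps transverse to $M$, so that near $N$ it becomes a fibrewise linear bundle map. Then $V\coloneq f^{-1}(U)$, shrunk if necessary, is a tubular neighbourhood of $N$ identified with $g^*\nu\iota\cong\nu\kappa$. On $V$, $f$ is the bundle map $\bar g\colon \nu\kappa\to\nu\iota$ covering $g$, and it is an isomorphism on each fibre. A homotopy of $f$ changes $f^*$ only through the canonical isomorphism of pulled-back local systems from Example~\ref{ex:pullback_system}, so this reduction is harmless.

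Next I write $\iota^!$ as the four-step composite from Definition~\ref{defn:Umkehr_map} and check that $f^*$ commutes with each step. The map of pairs $f\colon(Y,Y-N)\to(X,X-M)$ is well defined because $f^{-1}(M)=N$. It restricts to $\bar g\colon(V,V-N)\to(U,U-M)$, and the excision isomorphisms $\jmath_U^*,\jmath_V^*$ are natural for this restriction. The final maps to $H^{*}(X,\partial X;\AA)$ and $H^*(Y,\partial Y;f^*\AA)$ commute with $f^*$ by functoriality, using $f(\partial Y)\subseteq\partial X$ and $f(Y-N)\subseteq X-M$. The identification $p_U^*\iota^*\AA\cong\jmath_U^*\AA$ comes from the radial homotopy, and $\bar g$ intertwines the radial homotopies because it is fibrewise linear. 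It therefore pulls this identification back to the corresponding one on $V$, composed with $g^*\iota^*\AA\cong\kappa^*f^*\AA$.

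The substantive step, and the one I expect to be the main obstacle, is naturality of the twisted Thom isomorphism under the fibrewise isomorphism $\bar g$. Concretely, I must show $\bar g^*(u_{\nu\iota})=u_{\nu\kappa}$ under $\bar g^*\pi^*\Or_{\nu\iota}\cong\pi^*\Or_{\nu\kappa}$. This reduces, by restriction to fibres and uniqueness of a class restricting to the canonical generator fibrewise, to the observation that the canonical generator defined by $a\otimes a\mapsto 1$ in Definition~\ref{defn:twisted_thom_class} is preserved by any linear isomorphism of fibres. Given that, the formula $\varphi^*(\alpha)=\pi^*\alpha\cup u$, together with naturality of cup products, yields $\bar g^*\varphi^*_\iota(\alpha)=\varphi^*_\kappa(g^*\alpha)$. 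The remaining care is bookkeeping: I must verify that all identifications of local systems, namely pullback, tensor, and the orientation isomorphisms of Lemma~\ref{lemma:prop_loc_sys}, are the canonical ones, so that the various squares commute on the nose rather than up to sign.
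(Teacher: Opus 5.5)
Your proposal is correct and is essentially the paper's proof: choose tubular neighbourhoods $U$ of $M$ and $V$ of $N$ with $f(V)\subseteq U$ and $p_U\circ f|_V=g\circ p_V$, then check by naturality that $f^*$ commutes with each step of the Umkehr composite, the key input being that $(f|_V)^*$ carries the Thom class $u_U$ to $u_V$. The paper only asserts that such compatible neighbourhoods exist and explicitly assumes the Thom-class compatibility; your homotopy of $f$ to a fibrewise linear map near $N$ and your fibrewise canonical-generator argument for $\bar g^*(u_{\nu\iota})=u_{\nu\kappa}$ supply exactly those two points, and the homotopy is genuinely needed in general (e.g.\ when $g$ is not injective).
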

\begin{proof}
    First note that the differential of $f$ induces a canonical isomorphism $g^*\nu\iota \cong \nu\kappa$ which induces a canonical isomorphism $g^*\mathcal{O}_{\nu\iota} \cong \mathcal{O}_{\nu\kappa}$.
    Moreover, commutativity of the diagram implies $g^*\iota^*\AA=\kappa^*f^*\AA$.
    Thus $g^*(\alpha)\in H^*(N;g^*\iota^*\AA\otimes g^*\mathcal{O}_{\nu\iota})$ may be viewed as an element of $H^*(N;\kappa^* f^*\AA\otimes\mathcal{O}_{\nu\kappa})$, and so $\kappa^! g^*(\alpha)$ is in $H^{*+n}(Y,\partial Y;f^*\AA)$ as claimed.

    Choose tubular neighbourhoods $\jmath_U\colon U\hookrightarrow X$ of $M$ and $\jmath_V\colon V \hookrightarrow Y$ of $N$, with projections \(p_U \colon U \to M\) and \(p_V \colon V \to N\), such that $f(V)\subseteq U$ and the following diagram commutes:
    \[
    \begin{tikzcd}
        V \arrow[d,"p_V" swap] \arrow[r,"f|_V"] & U \arrow[d,"p_U"] \\
        N \arrow[r,"g"]                    & M
    \end{tikzcd}
    \]
    Finally, if we assume that the map \[(f|_V)^*\colon H^n(U,U-M;p_U^*\mathcal{O}_{\nu\iota})\to H^n(V,V-N; (f|_V)^*p_U^*\mathcal{O}_{\nu\iota}) = H^n(V,V-N; p_V^*\mathcal{O}_{\nu\kappa})\] sends the Thom class $u_U$ to the Thom class $u_V$, then the result follows by commutativity of the following diagram
    \[
    \begin{tikzcd}[column sep=1.25cm, row sep=1.5em]
        H^*(N;\kappa^* f^*\AA\otimes \Or_{\nu\kappa}) \arrow[d,"p_V^*"'] & H^*(M;\iota^*\AA\otimes\Or_{\nu\iota}) \arrow[l,"g^*"] \arrow[d,"p_U^*"] \\
        H^*(V;p_V^*\kappa^* f^*\AA\otimes p_V^*\Or_{\nu\kappa}) \arrow[d,"\cup u_V"'] & H^*(U;p_U^*\iota^*\AA\otimes p_U^*\Or_{\nu\iota}) \arrow[l,"(f|_V)^*"] \arrow[d,"\cup u_U"]\\
        H^{*+n}(V,V-N ;\jmath_V^* f^*\AA) & H^{*+n}(U,U-M; \jmath_U^*\AA) \arrow[l,"(f|_V)^*"] \\
        H^{*+n}(Y,Y-N; f^*\AA) \arrow[u,"\jmath_V^*"] \arrow[d] & H^{*+n}(X,X-M; \AA) \arrow[l,"f^*"] \arrow[d] \arrow[u,"\jmath_U^*"'] \\
        H^{*+n}(Y,\partial Y;f^*\AA) & H^{*+n}(X,\partial X;\AA) \arrow[l,"f^*"]
    \end{tikzcd}
    \]
    whose squares commute by naturality.
\end{proof}

The following property of the Umkehr map is well-known, see \cite{Thom52, OhmotoSaekiSakuma03} for example.

\begin{lemma}\label{lem:propUmkehr}
    Let $\iota \colon M \hookrightarrow X$ be an embedding of a closed $k$-manifold into the interior of a compact $(n+k)$-manifold.
    The twisted Umkehr map $\iota^!$ is Poincar\'e dual to $\iota_*$. More precisely, $\iota^!$ agrees with the composition
    \begin{align*}
      H^*(M;\iota^*\AA\otimes\Or_{\nu\iota}) & \xrightarrow{\cap [M]_\tw}  H_{k-*}(M;\iota^*\AA\otimes\Or_{\nu\iota}\otimes\Or_M) \\
      & \stackrel{\cong}{\longrightarrow} H_{k-*}(M;\iota^*\AA\otimes\iota^*\Or_X) \cong H_{k-*}(M;\iota^*(\AA\otimes\Or_X))\\
      & \stackrel{\iota_*}{\longrightarrow} H_{k-*}(X;\AA\otimes\Or_X) \\
      & \xrightarrow{(\cap [X,\partial X]_\tw)^{-1}} H^{*+n}(X,\partial X;\AA)  
    \end{align*}          
    where the first unnamed isomorphism comes from the isomorphisms of local coefficient systems
    \[
    \Or_{\nu\iota}\otimes\Or_M\cong \Or_{\nu\iota}\otimes\Or_{TM}\cong \Or_{\nu\iota\oplus TM} \cong \Or_{\iota^*TX}\cong \iota^*\Or_{TX}\cong \iota^*\Or_X
    \]
    furnished by \textup{Lemma \ref{lemma:prop_loc_sys}}.
    Here $[M]_\tw\in H_k(M;\Or_M)$ and $[X,\partial X]_\tw\in H_{n+k}(X,\partial X;\Or_X)$ are twisted fundamental classes.
\end{lemma}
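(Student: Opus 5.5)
We reduce to the case of a tubular neighbourhood and then push forward. Fix a closed tubular neighbourhood $D \subset \operatorname{int} X$ of $M$, identified with the disc bundle $D(\nu\iota)$, with projection $p\colon D\to M$ and zero section $z\colon M\to D$, so that $\iota = \jmath\circ z$ where $\jmath\colon D\hookrightarrow X$ is the inclusion. By Definition~\ref{defn:Umkehr_map}, $\iota^!$ factors as $z^!\colon H^*(M;\iota^*\AA\otimes\Or_{\nu\iota})\to H^{*+n}(D,\partial D;\jmath^*\AA)$, followed by the excision inverse into $H^{*+n}(X,X-\operatorname{int}D;\AA)$ and the map to $H^{*+n}(X,\partial X;\AA)$. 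Write $e\colon H^{*+n}(D,\partial D;\jmath^*\AA)\to H^{*+n}(X,\partial X;\AA)$ for this extension-by-zero composite.

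On the homology side, $\iota_*=\jmath_*z_*$. The first step is the standard compatibility of twisted Poincar\'e–Lefschetz duality with extension by zero and pushforward along the codimension-zero inclusion $\jmath$:
\[
\jmath_*(\beta\cap[D,\partial D]_\tw)=e(\beta)\cap[X,\partial X]_\tw ,
\]
where $[D,\partial D]_\tw$ is the image of $[X,\partial X]_\tw$ under $H_{n+k}(X,\partial X;\Or_X)\to H_{n+k}(X,X-\operatorname{int}D;\Or_X)\cong H_{n+k}(D,\partial D;\jmath^*\Or_X)$. This follows from naturality of the cap product at chain level, as in the untwisted case. It reduces the statement to the disc bundle: we must show that
\[
z^!(\alpha)\cap[D,\partial D]_\tw = z_*\bigl(\alpha\cap[M]_\tw\bigr)\in H_{k-*}(D;\jmath^*(\AA\otimes\Or_X)),
\]
after the identifications of local systems listed in the statement. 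Since $p$ is a homotopy equivalence with $p\circ z=\id$, it suffices to show that $p_*$ of the left-hand side equals $\alpha\cap[M]_\tw$.

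Now $z^!(\alpha)=p^*\alpha\cup u$, where $u$ is the twisted Thom class (Lemma~\ref{lem:ThomUmkehr}, together with the Thom isomorphism~\eqref{eq:twisted_thom_iso_I}). Using $(p^*\alpha\cup u)\cap c = p^*\alpha\cap(u\cap c)$ and the projection formula $p_*(p^*\alpha\cap y)=\alpha\cap p_*y$, we obtain
\[
p_*\bigl(z^!(\alpha)\cap[D,\partial D]_\tw\bigr)=\alpha\cap p_*\bigl(u\cap[D,\partial D]_\tw\bigr).
\]
This reduces everything to the single identity $p_*(u\cap[D,\partial D]_\tw)=[M]_\tw$ in $H_k(M;\Or_M)$, under $\Or_{\nu\iota}\otimes\iota^*\Or_X\cong\Or_M$.

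This identity is the main obstacle, because it is where the signs and the canonical isomorphisms of Lemma~\ref{lemma:prop_loc_sys} must match. The plan is to check it locally. Both sides are twisted fundamental classes of the closed manifold $M$: a class in $H_k(M;\Or_M)$ is determined by its restrictions to $H_k(M,M-x;\Or_M)$ for all $x$. Restricting to a contractible chart $U$ over which $\nu\iota$ is trivial and using a product chart for $D_U\cong U\times D^n$, the computation becomes the Euclidean statement that $u_{\RR^n}\cap[D^k\times D^n]$ is the fundamental class of $D^k$. This holds because the cross product satisfies $[D^k\times D^n]=[D^k]\times[D^n]$ and $\langle u_{\RR^n},[D^n]\rangle=1$, which is the normalisation $a\otimes a\mapsto1$ built into the twisted Thom class and into the pairing~\eqref{eq:TXxX=Z}. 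The local orientation generators $u^\varphi_U$ and $\theta^\varphi_U$ change by the same sign under a change of chart, so this local check glues and does not depend on choices.
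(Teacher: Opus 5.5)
The paper does not prove this lemma; it states it as well known, citing Thom and Ohmoto--Saeki--Sakuma. Your argument is a correct, self-contained version of the standard proof, and it runs in three steps:
\begin{enumerate}
\item Naturality of the cap product under the codimension-zero inclusion $\jmath$, together with excision, moves the problem into the tubular neighbourhood.
\item The identity $z^!(\alpha)=p^*\alpha\cup u$, combined with $(a\cup b)\cap c=a\cap(b\cap c)$ and the projection formula, reduces everything to the single identity $p_*(u\cap[D,\partial D]_\tw)=[M]_\tw$.
\item That identity is checked pointwise, since a class in $H_k(M;\Or_M)$ is detected by its images in the groups $H_k(M,M-x;\Or_M)$.
\end{enumerate}
What your route buys over the citation is that this is exactly where the choice-free identifications are actually used: those of Lemma~\ref{lemma:prop_loc_sys}, the normalisation $a\otimes a\mapsto 1$ of the twisted Thom class, and the pairing \eqref{eq:TXxX=Z}. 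The paper stresses that these are canonical for the sake of Subsection~\ref{subsec:twisted_Umkehr}, but leaves the comparison to the literature. The citation buys brevity and, in the Ohmoto--Saeki--Sakuma setting, a statement for arbitrary smooth maps rather than just embeddings.

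One bookkeeping correction to your local check. The statement orients $X$ along $M$ via $\Or_{\nu\iota\oplus TM}$, i.e.\ normal directions first. So the local model should be $D^n\times D^k$ with class $[D^n]\times[D^k]$, and then $(u_{\RR^n}\times 1)\cap([D^n]\times[D^k])=\langle u_{\RR^n},[D^n]\rangle\,[D^k]=[D^k]$ with no sign. With the base-first product $[D^k]\times[D^n]$ that you wrote, interchanging cap and cross products costs $(-1)^{nk}$ in the standard conventions. Your conclusion survives only because $[D^k]\times[D^n]$ is itself $(-1)^{nk}$ times the prescribed local orientation, so the two signs cancel.

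Similarly, for $\deg\alpha>0$ the rule $(a\cup b)\cap c=a\cap(b\cap c)$ fixes a cap-product convention; with the other common one you pick up an extra $(-1)^{n\deg\alpha}$. None of this matters for the paper, which only ever applies the lemma to $\iota^!(1)$.
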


\subsection{The twisted Thom space and Thom class}
\label{subsec:twisted_thom_space_class}

Recall that the determinant induces a map on classifying spaces $w_1 \colon \BO(n) \to \BO(1)$, which we may choose to be a fibration.
In this section, we use the map $w_1$ to obtain a twisted Thom space parametrised over \(\BO(1)\).

\begin{definition}\label{defn:twThom_pushout}
    Let $\pi_{\gamma_n} \colon \gamma_n \to \BO(n)$ be the universal $n$-plane bundle and denote by $D(\gamma_n)$ its disc bundle and by $S(\gamma_n)$ its sphere bundle.
    The space $\twThom(n)$ is defined as the pushout
        \begin{equation}\label{eq:twThom}
        \begin{tikzcd}[sep=large]
            S(\gamma_n) \arrow[r, "w_1 \circ \pi_{S(\gamma_n)}"] \arrow[d, hook] & \BO(1) \arrow[d, "s"] \\
            D(\gamma_n) \arrow[r, "r"] & \twThom(n) \arrow[ul, phantom, "\ulcorner", very near start]
        \end{tikzcd}
    \end{equation}
    is called the \emph{twisted universal Thom space for $\OO(n)$}.
\end{definition}

By Lemma~\ref{lem:fibrewisepushouts}, we obtain a map $q \colon \twThom(n) \to \BO(1)$ so that the triple $(\twThom(n), q, s)$ is a retractive space in $\mathcal{K}^*_{/\BO(1)}$.
Moreover, since we chose $w_1 \colon \BO(n) \to \BO(1)$ to be a fibration, the retractive space $(\twThom(n), q, s)$ is fibrant.

\begin{remark}
    Since $S(\gamma_n) \hookrightarrow D(\gamma_n)$ is a cofibration, the diagram~\eqref{eq:twThom} is a homotopy pushout.
\end{remark}

\begin{remark}\label{rmk:model_bo_homotopy_orbits}
    Let us see that the homotopy quotient of $\BSO(n)$ is a model for $\BO(n)$.
    Given models for $\EO(n)$ and $\E\ZZ_2$, consider the diagonal action of $\OO(n)$ on the product $\E\ZZ_2\times\EO(n)$, where the action on the first factor is given by the determinant map $\OO(n)\to \ZZ_2$. This is a free action on a contractible space. Furthermore, the inherited action of the index $2$ (normal) subgroup $\SO(n)$ is also free (and trivial on the first factor), so
    \[
        \left.\big(\E\ZZ_2 \times \EO(n)\big) \middle/\SO(n)\right.=\E\ZZ_2\times \BSO(n)
    \]
    is a model for \(\BSO(n)\).
    We compute the homotopy quotient of \(\BSO(n)\) with respect to the residual action of \(\OO(n)/\SO(n) \cong \ZZ_2\) as follows: 
    \begin{align*}
        \BSO(n)\sslash\ZZ_2 & = \E\ZZ_2\times_{\ZZ_2} \BSO(n) \\
                            & \cong \left. \left(\sfrac{(\E\ZZ_2 \times \EO(n))}{\SO(n)}\right) \middle/ \left(\sfrac{\OO(n)}{\SO(n)}\right) \right. \\
                            & \cong \sfrac{(\E\ZZ_2\times\EO(n))}{\OO(n)}\\
                            & \cong \BO(n).
    \end{align*}

    In a similar fashion, we can take as a model for the universal rank $n$ vector bundle $\gamma_n\to \BO(n)$ the $\ZZ_2$-homotopy orbits of a universal oriented $n$-plane bundle $\tilde{\gamma}_n \to \BSO(n)$. Hence we can realise the twisted Thom space as the following pushout:
    \begin{equation}\label{eq:twThom_as_homotopy_orbits}
    \begin{tikzcd}
        \mathllap{S(\tilde{\gamma}_n) \sslash \ZZ_2 =\;} S(\tilde{\gamma}_n \sslash \ZZ_2) \arrow[d, hook] \arrow[r] &  \B\ZZ_2\arrow[d, "s"] \\
        \mathllap{D(\tilde{\gamma}_n) \sslash \ZZ_2 =\;} D(\tilde{\gamma}_n \sslash \ZZ_2)  \arrow[r] & \twThom(n) \arrow[ul, phantom, "\ulcorner", very near start] \mathrlap{.}
    \end{tikzcd}
    \end{equation}
    Recall that we can write $\MSO(n)$ as the pushout
    \begin{equation}\label{eq:MSO(n)_pushout}
    \begin{tikzcd}
        S(\tilde{\gamma}_n) \arrow[r] \arrow[d, hook] & \ast \arrow[d] \\
        D(\tilde{\gamma}_n)  \arrow[r] &   \MSO(n) \arrow[ul, phantom, "\ulcorner", very near start]
    \end{tikzcd}
    \end{equation}
    in the category $\mathcal{K}_{\ZZ_2}$.
    According to Lemma \ref{lem:propsHQ}\,(i), applying homotopy quotients to \eqref{eq:MSO(n)_pushout} recovers precisely the pushout~\eqref{eq:twThom_as_homotopy_orbits} in the category $\mathcal{K}$.
    In other words, under the identifications above, we can write
    \[
        \twThom(n) = \MSO(n) \sslash \ZZ_2,
    \]
    an identification we will use freely when the need arises.
\end{remark}

Returning to the task at hand, we next establish some notational shortcuts.
By construction (cf.\ Definition \ref{defn:twThom_pushout}), the twisted Thom space is homeomorphic to the adjunction space
\[
    \twThom(n) = D(\gamma_n) \bigcup_{w_1 \circ \pi_{S(\gamma_n)}} \BO(1).
\]
As such, we may regard the open disc bundle $\mathring{D}(\gamma_n)$ and $\BO(1)$ as parametrised subspaces of $\twThom(n)$, and thus freely omit the maps $r$ and $s$ of the pushout square \eqref{eq:twThom} from our notation as we see fit.
Moreover, as discussed in Remark \ref{rmk:universal_orientation_system},  the classifying space \(\BO(1)\) is equipped with the universal orientation system $\Or_{\gamma_1}$ induced by the universal line bundle $\gamma_1 \to \BO(1)$.
This coefficient system pulls back to $\twThom(n)$, which we define as
\[
    \mathcal{L}_\ZZ^{\mathrm{Th}} \coloneq q^*\Or_{\gamma_1} \cong \Or_{q^*\gamma_1}.
\]

We conclude this subsection by introducing the universal twisted Thom class on $\twThom(n)$.
Recall from Subsection~\ref{subsec:twistedThomisos} that the universal $n$-plane bundle $\gamma_n$ admits a canonical Thom class
\[
    u_{\gamma_n} \in H^n(\gamma_n,\gamma_n-z(\BO(n));\pi_{\gamma_n}^*\Or_{\gamma_n})\cong H^n(D(\gamma_n),S(\gamma_n);\pi_{\gamma_n}^*\Or_{\gamma_n}).
\]
Now observe that $\Or_{\gamma_n} \cong \Or_{\det\gamma_n}\cong w_1^*\Or_{\gamma_1}$ by Lemma~\ref{lemma:prop_loc_sys}, and therefore
\[
\pi_{\gamma_n}^*\Or_{\gamma_n}\cong \pi_{\gamma_n}^*w_1^*\Or_{\gamma_1}= r^*\mathcal{L}_\ZZ^\mathrm{Th}.
\]
Since the map $S(\gamma_n) \hookrightarrow D(\gamma_n)$ in the diagram \eqref{eq:twThom} is a cofibration, the triad
\[
    \left( \twThom(n); D(\gamma_n), \BO(1) \right)
\]
is excisive, and there is an excision isomorphism
\begin{align*}
    r^* \colon H^n(\twThom(n), \BO(1); \mathcal{L}_\ZZ^\mathrm{Th})
    &\stackrel{\cong}{\longrightarrow} 
    H^n(D(\gamma_n), S(\gamma_n); r^*\mathcal{L}_\ZZ^\mathrm{Th}), \\
    &\cong
    H^n(D(\gamma_n), S(\gamma_n); \pi_{\gamma_n}^* \Or_{\gamma_n}),
\end{align*}
see~\cite[p.\ 271]{Whitehead79} for excision in cohomology with local coefficients.

We define the \emph{universal twisted Thom class} $u_n^\tw\in H^n(\twThom(n),\BO(1);\mathcal{L}_\ZZ^{\mathrm{Th}})$ to be the class corresponding to $u_{\gamma_n}$ under this isomorphism.

\begin{remark}
    Even though $\gamma_n$ admits a preferred twisted Thom class
    \[
        u_{\gamma_n} \in H^n(D(\gamma_n),S(\gamma_n);\pi_{\gamma_n}^*\Or_{\gamma_n})\stackrel{(\ref{eq:twisted_thom_iso_I})}{\cong} H^0(\BO(n);\ZZ)\cong\ZZ
    \]
    there are, in general, exactly two choices of generators and hence twisted Thom classes which differ by a sign.
    Passing to the double cover associated to $\Or_{\gamma_n}$, in other words, pulling back along the map of pairs
    \[
        (D(\tilde{\gamma}_n),S(\tilde\gamma_n))\to (D(\gamma_n),S(\gamma_n)),
    \]
    these correspond to the two universal oriented Thom classes in
    \[
        H^n(D(\tilde{\gamma}_n),S(\tilde\gamma_n);\ZZ)\cong H^0(\BSO(n);\ZZ)\cong \ZZ.
    \]
\end{remark}
\section{Twisted Pontryagin--Thom construction and realisability}\label{sec:realisability}

In this section, we introduce the concept of cobordisms twisted by a line bundle and discuss the corresponding twisted Pontryagin--Thom construction.

\subsection{Parametrised transversality}
\label{subsec:parametrised_transversality}

Denote by $\catname{Mfd}$ the category of smooth manifolds without boundary.
For a fixed $B \in \catname{Mfd}$, we denote the category of manifolds over~$B$ by $\catname{Mfd}_{/B}$.
Transversality in the category $\catname{Mfd}_{/B}$ is less robust than in the category $\catname{Mfd}$.
For example, let $(Y, p_Y) \in \catname{Mfd}_{/B}$ and assume that $p_Y \colon Y \to B$ is a fibre bundle.
If a submanifold $A \subset Y$ does not meet the fibres of $p_Y$ transversely, and $A$ has relatively low codimension with respect to the fibres of $p_Y$, then there is in general no way we can perturb a given parametrised map $f \colon (X, p_X) \to (Y, p_Y)$ through parametrised maps in $\catname{Mfd}_{/B}$ to achieve transversality with respect to $A$.
However, if there exists a parametrised tubular neighbourhood, the next lemma demonstrates that we can always achieve transversality. 

\begin{lemma}\label{lem:fibrewise_transversality}
    Let $f \colon (X, p_X) \to (Y, p_Y)$ be a parametrised map in $\catname{Mfd}_{/B}$, and let $A \subset Y$ be a closed submanifold.
    Assume that $A$ admits a tubular neighbourhood $\psi \colon U \xrightarrow{\cong} \nu A$ for which $\pi_{\nu A} \circ \psi \colon U \to A$ is a parametrised map over $B$ via $p_Y$.
    Then $f$ is (smoothly) parametrised homotopic to a smooth, parametrised map that is transverse to $A$.
\end{lemma}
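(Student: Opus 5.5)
The plan is to reduce to the classical (unparametrised) transversality theorem for sections, by trading the map $f$ for a section of the normal bundle along the part of $X$ that lands in $U$. Set $V \coloneq f^{-1}(U)$, an open subset of $X$. On $V$ we have the composite $\psi\circ f|_V \colon V \to \nu A$, which lies over the map $g \coloneq \pi_{\nu A}\circ\psi\circ f|_V \colon V \to A$. By hypothesis $g$ is parametrised over $B$. Transversality of $f$ to $A$ on $V$ is then equivalent to transversality of the section
\[
    \sigma \colon V \to g^*\nu A, \qquad \sigma(x) = \psi(f(x)),
\]
to the zero section. This holds because $\psi$ identifies the normal directions of $A$ with the fibres of $\nu A$.

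The perturbation is carried out entirely inside the fibres of $\nu A \cong U$. Choose a closed neighbourhood $K\subset U$ of $A$, for instance the image of a closed disc bundle under $\psi^{-1}$, and a smooth cutoff $\lambda\colon X\to[0,1]$ that equals $1$ on $f^{-1}(K')$ for a smaller closed neighbourhood $K'\subset K$ of $A$, and whose support is contained in $f^{-1}(\operatorname{int}K)$. By the parametric transversality theorem (Thom's transversality for sections, or the standard argument with a finite-dimensional family of smooth sections spanning each fibre, followed by Sard's theorem), there are arbitrarily small smooth sections $\tau$ of $g^*\nu A$ such that $\sigma+\tau$ is transverse to the zero section on $f^{-1}(K')$. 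Choosing $\tau$ small enough that $\sigma+t\lambda\tau$ stays inside $\psi(K)$ for all $t\in[0,1]$, we define
\[
    f_t(x) = \begin{cases} \psi^{-1}\bigl(\sigma(x)+t\lambda(x)\tau(x)\bigr) & x\in V,\\ f(x) & x\notin V. \end{cases}
\]
This gives a smooth homotopy, because it is constant near the boundary of the support of $\lambda$.

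The crucial point is that the homotopy is parametrised. We have $\pi_{\nu A}(\sigma(x)+t\lambda\tau(x)) = g(x)$, so $f_t(x)$ lies in the same fibre of $\pi_{\nu A}\circ\psi$ as $f(x)$. Since $\pi_{\nu A}\circ\psi$ is a map over $B$ (via $p_Y$), this gives $p_Y(f_t(x)) = p_Y(\pi_{\nu A}\psi f_t(x))$ provided the tubular projection is fibre preserving, meaning $p_Y\circ\psi^{-1}$ equals $p_A\circ\pi_{\nu A}$. That is exactly how I read the hypothesis. Hence $p_Y\circ f_t = p_X$.

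It remains to check transversality. At points of $f_1^{-1}(A)$, first note that since $\tau$ is small and the perturbation only moves points within $\psi(K)$, we have $f_1^{-1}(A)\subseteq f^{-1}(K')$ after shrinking, so it lies where $\lambda=1$. There transversality follows from that of $\sigma+\tau$. Away from $V$ the map $f_1$ misses $A$.

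The main obstacle is the choice of $\tau$ together with the containment $f_1^{-1}(A)\subset\{\lambda=1\}$ when $X$ is noncompact. I would handle this by choosing $\tau$ with $|\tau|<\epsilon(x)$ for a positive continuous function $\epsilon$ on $V$, taken smaller than the distance from $\sigma(x)$ to the complement of $\psi(K')$ wherever $\lambda<1$. Sections obeying such a bound are still generic by the usual Baire-category form of the transversality theorem.
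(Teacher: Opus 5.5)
Your proposal is correct and is essentially the paper's argument. Take $\tau=L_\alpha\circ g$ for fibre-spanning sections $s_1,\dots,s_N$ of $\nu A$, and let $\lambda$ play the role of the paper's cutoff $\varphi$. Then your $f_t=\psi^{-1}(\sigma+t\lambda\tau)$ is exactly the paper's $F(\cdot,t\alpha_0)$, and it is parametrised for the same reason: the perturbation stays in the fibres of $\pi_{\nu A}\circ\psi$.

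The one difference is in how the perturbation is chosen. The paper applies Sard to the family $F(x,\alpha)$ with the cutoff already built in. That family is transverse to $A$, because $\varphi=1$ near $f^{-1}(A)$ and $\varphi(x)=0$ forces $F(x,\alpha)=f(x)\notin A$, so no smallness control is needed. You instead perturb first and cut off afterwards, so you need a bound, and the one in your final paragraph is misstated: the distance from $\sigma(x)$ to the complement of $\psi(K')$ is $0$ wherever $\lambda(x)<1$. What you actually want is $|\tau|<r$ on all of $V$, where $K'$ is the $\psi$-preimage of the closed $r$-disc bundle. This forces every zero of $\sigma+\lambda\tau$ into $f^{-1}(K')\subseteq\{\lambda=1\}$, where $d\lambda=0$, so transversality there follows from that of $\sigma+\tau$.
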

\begin{proof}
    Denote by $\tilde{U} \coloneq f^{-1}(U)$ and let $V \subset X$ be an open neighbourhood of $f^{-1}(A)$ for which $\overline{V} \subset \tilde{U}$.
    Choose a smooth function $\varphi \colon X \to [0,1]$ with $\restr{\varphi}{V} \equiv 1$ and $\restr{\varphi}{X - \tilde{U}} \equiv 0$.
    Let $(s_1, \dots, s_N)$ be sections of $\nu A$ so that
    \begin{align*}
        s_1(x), \dots, s_N(x) \quad \text{spans $\nu_x A$ for all $x \in A$.}
    \end{align*}
    Note that we can find finitely many of those sections because $A$ is compact.
    For $\alpha \in \RR^N$, we denote $$L_\alpha \coloneq \sum_{i=1}^N \alpha_i s_i.$$
    Now, we define a smooth map $F \colon X \times \RR^N \to Y$ via
    \begin{align*}
        F(x, \alpha) = \begin{cases}
            \psi^{-1}\!\left( \varphi(x) L_\alpha(\pi_{\nu A} \circ \psi \circ f(x)) + \psi(f(x)) \right) & \text{if $x \in \tilde{U}$,} \\
            f(x) & \text{if $x \notin \tilde{U}$}.
        \end{cases}
    \end{align*}
    Note that $F$ is a parametrised map because for $x \in \tilde{U}$, we have:
    \begin{align*}
        \left(p_Y \circ F\right)(x, \alpha) &= \left(p_Y \circ \psi^{-1} \right) \!\left( \varphi(x) L_\alpha(\pi_{\nu A} \circ \psi \circ f(x)) + \psi(f(x)) \right) \\
            &= \left(p_Y \circ \pi_{\nu A} \right) \! \left( \varphi(x) L_\alpha(\pi_{\nu A} \circ \psi \circ f(x)) + \psi(f(x)) \right) \\
            &= \left( p_Y \circ \pi_{\nu A} \circ \psi \right)(f(x)) \\
            &= \left(p_Y \circ f \right)(x) \\
            &= p_X(x).
    \end{align*}
    Here we used $p_Y \circ \pi_{\nu A} \circ \psi = \restr{p_Y}{U}$ because $\pi_{\nu A} \circ \psi$ is parametrised by assumption.
    By construction, $F$ is transverse to $A$.
    Therefore, by Sard's Theorem, there exists an $\alpha_0 \in \RR^N$ for which $F(\placeholder, \alpha_0)$ is transverse to $A$.
    The desired parametrised homotopy is now given by $(x,t) \mapsto F(x, t \alpha_0)$.
\end{proof}

\begin{remark}
    In Lemma~\ref{lem:fibrewise_transversality}, a sufficient condition for $A \subset Y$ to admit such a parametrised tubular neighbourhood is that $\restr{p_Y}{A}$ is a submersion.
\end{remark}

\subsection{Twisted cobordisms}\label{subsec:twisted_cobordisms}

Let $X$ be a closed $(n+k)$-dimensional manifold and $\xi \to X$ a smooth line bundle.

\begin{definition}
    A \emph{$\xi$-oriented submanifold of $X^{n+k}$} of dimension $k$ is a tuple $(M, \varphi)$ where $M \subset X$ is a closed embedded submanifold of dimension $k$ and
    \[
        \varphi \colon \bwedge^n\! \nu M \to \restr{\xi}{M}
    \]
    is a bundle isomorphism.
\end{definition}
Denote by $\pr_X \colon X \times [0,1] \to X$ the projection onto $X$.
\begin{definition}\label{defn:twisted_L_equivalence}
    We say that two closed $\xi$-oriented $k$-dimensional submanifolds $(M_i, \varphi_i)$ with $i=0,1$, are \emph{$\xi$-oriented cobordant} if there exists a $(\pr_X^*\xi)$-oriented compact $(k+1)$-dimensional submanifold $(W,\varphi)$ of $X \times [0,1]$ such that
    \begin{enumerate}[(i)]
        \item $W$ intersects $X \times \{i\}$ transversely;
        \item $\partial W = M_0 \times \{0\} \sqcup M_1 \times \{1\}$; and
        \item $\varphi$ restricts to $\varphi_i$ on $M_i \times \{i\}$ for $i=0,1$.
    \end{enumerate}
    We denote the set of $\xi$-oriented cobordism classes of dimension $k$ by $\LL_k(X; \Or_X \otimes \Or_\xi)$.
\end{definition}

Our choice of the notation $\LL_k(X; \Or_X \otimes \Or_\xi)$ will become apparent later.

\begin{remark}
    Note that Definition~\ref{defn:twisted_L_equivalence} contains Definition~\ref{def:oriented_cobordism} by choosing $\xi$ to be the trivial line bundle over an oriented $X$ since a choice of isomorphism \( \Or_{\nu M} \cong \underline{\ZZ} \) amounts precisely to a choice of orientation of the normal bundle $\nu M$ for a submanifold $M \subset X$, which in turn, since $X$ is oriented, determines an orientation of $M$.
\end{remark}

\subsection{A manifold model for the twisted Thom space}\label{subsec:manifold_model_for_thom_space}

In this section, we want to fix models for $\BO(n)$ and $\BO(1)$ that can not only be approximated by manifolds, but also (as necessitated by our application) concretise $(\twThom(n), q, s)$ as a fibrant retractive space.
To this end, one might be tempted to model $\BO(n)$ by the Grassmannians $\Gr_n(\RR^\infty)$ of $n$-planes in $\RR^\infty$ using the Pl\"ucker embedding:
\begin{align*}
    \iota_P \colon \Gr_n(\RR^N) &\hookrightarrow P\!\left(\bwedge^n \RR^N \right), \\
        \operatorname{span}(v_1, \dots v_n) &\mapsto [v_1 \wedge \dots \wedge v_n].
\end{align*}
This is fruitless, however, for the colimit of \(\iota_P\) does not realise $(\twThom(n), q, s)$ as a fibrant space over $\RR P^\infty$.
Subsequently, we will instead build our model upon homotopy orbits of $\BSO(n)$ as discussed in Remark~\ref{rmk:model_bo_homotopy_orbits}.

For $N >0$, let $\tilde{\gamma}_{n,N} \to \widetilde{\Gr}_n(\RR^N)$ be the canonical oriented $n$-plane bundle over the oriented Grassmannians $\widetilde{\Gr}_n(\RR^N)$ of oriented $n$-planes in $\RR^N$.
Both the space $\widetilde{\Gr}_n(\RR^N)$ and the bundle $\tilde{\gamma}_{n,N}$ are equipped with a $\ZZ_2$-action given by reversing the orientation of a given oriented $n$-plane.
Then, for each $L,N >0$, there is a fibre bundle
\begin{align*}
    \Gr_{n,N}^{L} \coloneq \widetilde{\Gr}_n(\RR^N) \times_{\ZZ_2} S^L \to \RR P^L .
\end{align*}
The bundle $\tilde{\gamma}_{n,N}$ induces an $n$-plane bundle
\begin{align*}
    \gamma_{n,N}^L \coloneq \tilde{\gamma}_{n,N} \times_{\ZZ_2} S^L \to \Gr_{n,N}^{L}.
\end{align*}
Note that $\gamma_{n,N}^L \to \RR P^L$ is also a fibre bundle with fibre $\tilde{\gamma}_{n,N}$.
We then model $\BO(n)$ by $$\Gr_{n,\infty}^\infty \coloneq \colim_{L,N \to \infty} \Gr_{n,N}^{L} = \widetilde{\Gr}_n(\RR^\infty) \times_{\ZZ_2} S^\infty,$$ which is equivalent to the homotopy orbits of $\widetilde{\Gr}_n(\RR^\infty)$ and therefore a fibre bundle over~$\RR P^\infty$.
Moreover, the vector bundle $\colim_{L,N}\gamma_{n,N}^L \eqqcolon \gamma_n \to \Gr_{n,\infty}^{\infty}$ becomes the classifying $n$-plane bundle over it.
Again, the bundle $\gamma_n \to \RR P^\infty$ is also a fibre bundle with fibre $\tilde{\gamma}_n$, that is, the canonical oriented $n$-plane bundle over $\widetilde{\Gr}_n(\RR^\infty)$.
We define the following pushout:
\[
\begin{tikzcd}
    \mathllap{S(\tilde{\gamma}_{n,N}) \times_{\ZZ_2} S^L = \;} S(\gamma^L_{n,N}) \arrow[r] \arrow[d, hook] & \RR P^L \arrow[d] \\
    \mathllap{D(\tilde{\gamma}_{n,N}) \times_{\ZZ_2} S^L = \;} D(\gamma^L_{n,N}) \arrow[r] & \mathrm{M}^{(L,N)}_\tw\mathrm{O}(n) \mathrlap{.} \arrow[ul, phantom, "\ulcorner", very near start]
\end{tikzcd}
\]
By construction, we have $\colim_{(L,N)} \mathrm{M}_{(L,N)}^\tw\mathrm{O}(n) = \twThom(n)$.
The map $q \colon \twThom(n) \to \RR P^\infty$ is induced by the projection $D(\gamma^\infty_{n,\infty}) \to \RR P^\infty$, which is a fibre bundle hence a fibration. By an application of Lemma~\ref{lem:fibrewisepushouts} we obtain a fibrant retractive space over $\RR P^\infty$.

\subsection{The twisted Pontryagin--Thom construction}
\label{subsec:Pontryagin}

Throughout this section, we consider a smooth map \(p \colon X \to \RR P^\infty,\) where \(X\) is a closed $(n+k)$-dimensional manifold.
By smoothness of the map $p$, we mean the following.
Since \(X\) is compact, there always exists a number \(L >0\) for which $\operatorname{im}(p) \subset \RR P^L$, so we require that $p$ is smooth after restricting its codomain to $\RR P^L$.
Altogether, the tuple $(X,p)$ assembles to a parametrised space over $\RR P^\infty$, and we write
\[
    \xi \coloneq p^* \gamma_1
\]
for the induced line bundle over $X$. We emphasise that any line bundle \(\xi\) appearing in this section must come from this map into $\RR P^\infty$.

\begin{construction}[Twisted Pontryagin manifold]\label{construction:pontryagin_manifolds}
    Given a parametrised smooth manifold \((X,p)\) with $p \colon X \to \RR P^\infty$ as above, choose a representative $f \colon X \to \twThom(n)$ of a parametrised homotopy class in $[X, \twThom(n)]_{\RR P^\infty}$. Using again the compactness of $X$, note that
    \[
        \operatorname{im}(f) \subset \mathrm{M}_{(L,N)}^\tw\mathrm{O}(n) \qquad \text{for sufficiently large \(L \text{ and } N.\)}
    \]
    We may assume without loss of generality that $f$ is smooth in a neighbourhood $U$ of the zero section $z \colon \Gr_{n,N}^L \hookrightarrow \mathring{D}(\gamma_{n,N}^L)$. Indeed, if we write $V \coloneq f^{-1}(U)$, and \(f\) is not already smooth on \(V\), then, regarding $\mathring{D}(\gamma^L_{n,N})$ as a fibre bundle over $\RR P^L$, the restriction $\restr{f}{V}$ is a smoothable section of the pullback bundle $\restr{p}{U}^*\mathring{D}(\gamma^{L}_{n,N})$. Additionally, according to Lemma~\ref{lem:fibrewise_transversality}, we may assume that $f$ intersects the zero section $z$ transversely.
    Altogether, having made all necessary assumptions, we have that $M_f\coloneq f^{-1}(z)$ is a closed submanifold of $X$ with codimension~$n$.
    The canonical identification of the normal bundle of $\Gr_{n,N}^L$ with \(\gamma_{n,N}^L = \restr{\gamma_n}{\Gr_{n,N}^L}\) and the identity $q \circ f =p$ together yield a canonical bundle isomorphism
    \begin{align*}
        \bwedge^n\!\restr{f}{M_f}^*\!\gamma_n &\cong \restr{f}{M_f}^* (\bwedge^n\! \gamma_n) \\
            &\cong \restr{f}{M_f}^* (q^* \gamma_1) \\
            &= \left(q \circ \restr{f}{M_f}\right)^*\!\gamma_1 \\
            &= \restr{p}{M_f}^*\!\gamma_1 \\
            &= \restr{\xi}{M_f}.
    \end{align*}
    Subsequently, the map $\bwedge^n\! \mathrm{d}f$ induces an isomorphism $\varphi_f \colon \bwedge^n\! \nu {M_f} \xrightarrow{\cong} \restr{\xi}{M_f}$.
    We call the tuple $(M_f, \varphi_f)$ the \emph{twisted Pontryagin manifold of~$f$}.
\end{construction}
Directly, Construction~\ref{construction:pontryagin_manifolds} gives rise to a map
\begin{equation}\label{eq:d}
    d \colon [X, \twThom(n)]_{\RR P^\infty} \longrightarrow \LL_k(X; \Or_X \otimes \Or_\xi), \qquad [f]_{\RR P^\infty} \longmapsto \left[M_f, \varphi_f \right].
\end{equation}
To see that this map is well-defined, assume that two parametrised maps $f_i \colon X \to \twThom(n)$, for $i=0,1$ with the above properties are parametrised homotopic through some parametrised homotopy $h \colon X \times [0,1] \to \twThom(n)$.
As we may assume that $h$ is smooth in $\mathring{D}(\gamma^L_{n,N})$ and transverse to $\Gr_{n,N}^L$, set $W_h \coloneq h^{-1}(z)$.
Then the twisted Pontryagin manifold $(W_h, \varphi_h)$ associated to $h$ is the desired $\xi$-oriented cobordism between the twisted Pontryagin manifolds of the parametrised homotopic maps $f_0$ and $f_1$.

We will now describe the twisted collapse map,
\[
    c \colon \LL_k(X; \Or_X \otimes \Or_\xi) \longrightarrow [X, \twThom(n)]_{\RR P^\infty}.
\]
We will show later that $c$ is inverse to $d$.
We remark that the discussion in Cruickshank~\cite[Section 5.1]{Cruickshank03} inspired the following construction that is essential to define this map.

\begin{construction}\label{construction:collapse}
Suppose that $(M, \varphi)$ is a $\xi$-oriented submanifold of $X$.
Fix a closed tubular neighbourhood $\psi \colon T \to D(\nu M)$ of $M$.
We define the following vector bundle $E \to T$ via the pullback
\[
\begin{tikzcd}[column sep=large]
    E \arrow[r, "\rho"] \arrow[d] \arrow[rd, phantom, "\lrcorner", very near start] & \nu M \arrow[d, "\pi_{\nu M}"] \\
    T \arrow[r, "\sim", "\pi_{\nu M} \circ \psi"'] & M \mathrlap{.}
\end{tikzcd}
\]
Since $\pi_{\nu M} \circ \psi$ is a strong deformation retract onto $M$, this bundle map is unique up to vertical homotopy, and we have $\restr{E}{M} = \nu M$. Next observe that there is an isomorphism $\varphi\colon \bwedge^n \nu M \to \restr{\xi}{M}$ that extends to an isomorphism $\varphi_T\colon \bwedge^n E \to \restr{\xi}{T}$; therefore, there is a classifying map $g \colon T \to \Gr_{n,\infty}^\infty$ for the bundle $E$ that is a lift of $\restr{p}{T} \colon T \to \RR P^\infty$ through the fibration $w_1 \colon \Gr_{n,\infty}^\infty\to \RR P^\infty$. Next, as indicated in diagram \eqref{eq:Gfactors} below, we may choose the bundle map $G \colon E \to \gamma_n$ covering~$g$ so that the exterior power $\bwedge^n G\colon \bwedge^n E\to \bwedge^n \gamma_n$ factors the given $\xi$-orientation $\varphi$ over $M$:
\begin{equation}\label{eq:Gfactors}
\begin{tikzcd}
    \bwedge^n \nu M \arrow[r, equal]  \arrow[d, "\varphi"'] & \bwedge^n \restr{E}{M} \arrow[r, "\bwedge^n G"] & \bwedge^n \gamma_n \arrow[d] \\
    \restr{\xi}{M} \arrow[r, equal] & \restr{p}{M}^* \gamma_1 \arrow[r] & \gamma_1 \mathrlap{.}
\end{tikzcd}
\end{equation}

We now identify $D(E)$ with the parametrised product $D(\nu M) \times_M D(\nu M)$ using the following pullback diagram:
\begin{equation}\label{eq:identify_diagonal_pullback}
\begin{tikzcd}
    D(E) \arrow[rd, "\overline{\psi}", dashed] \arrow[rrd, "\rho"', bend left=25] \arrow[d, "\pi_E"'] &                                                                         &                                 \\
    T \arrow[rd, "\psi"']                            & D(\nu M) \times_M D(\nu M) \arrow[r, "\beta"] \arrow[d, "\alpha"] & D(\nu M) \arrow[d, "\pi_{\nu M}"] \\
                                                                                      & D(\nu M) \arrow[r, "\pi_{\nu M}"']                                         & M \mathrlap{.}
\end{tikzcd}
\end{equation}
By way of explanation, the maps $\alpha$ and $\beta$ are the projections onto the first and second component, respectively; furthermore, since the lower right square is a pullback square, the maps $\rho$ and $\psi $ induce a map
\[
    \overline{\psi} \colon D(E) \to D(\nu M) \times_M D(\nu M);
\]
and, lastly, it is easy to see that $\overline{\psi}$ is a bundle map that covers~$\psi$, whereby $\overline{\psi}$ is particularly a diffeomorphism.
Notice that $\alpha$ has a section $\Delta \colon D(\nu M) \to D(\nu M) \times_M D(\nu M)$ given by the diagonal map $v \mapsto (v,v)$. We define $\sigma \coloneq \overline{\psi}^{-1} \circ \Delta \circ \psi \colon T \to D(E)$ and compute
\begin{align*}
    \pi_E \circ \sigma &= \pi_{E} \circ \overline{\psi}^{-1} \circ \Delta \circ \psi \\
        &= \psi^{-1} \circ \alpha \circ \Delta \circ \psi \\
        &= \psi^{-1} \circ \id_{\nu M}\circ \psi \\
        &= \id_T,
\end{align*}
from which we conclude that the map $\sigma$ is a section of $\pi_E$ on $T$.

Finally, we define the map $f_M \colon X \to \twThom(n)$ as follows:
\begin{align}\label{eq:collapse_map_construction}
    f_M(x) = \begin{cases}
        \left(G \circ \sigma \right)(x) & \text{if $x \in \mathring{T}$,} \\
        p(x) & \text{if $x \notin\mathring{T}$.}
    \end{cases}
\end{align}
It remains to check that $f_M$ is continuous on $\twThom(n)$ and a parametrised map.
To see this, observe that
\begin{align*}
    \left(w_1 \circ \pi_{\gamma_n} \circ G \circ \sigma \right)(x) &= \left(w_1 \circ g \circ \pi_E \circ \sigma \right)(x) \\
        &= \left(w_1 \circ g \right)(x) \\
        &= p(x),
\end{align*}
and recall that $\twThom(n)$ is given by the adjunction space $D(\gamma_n) \cup_{w_1 \circ \pi_{S(\gamma_n)}} \RR P^\infty$.
Therefore, $f_M$ is continuous and a parametrised map since $q \circ f_M = p$.
An illustration of this construction is provided in Figure~\ref{fig:twisted_pontryagin_thom}.
\end{construction}

\begin{figure}[htb]
    \centering
    \begin{overpic}[width=\textwidth]{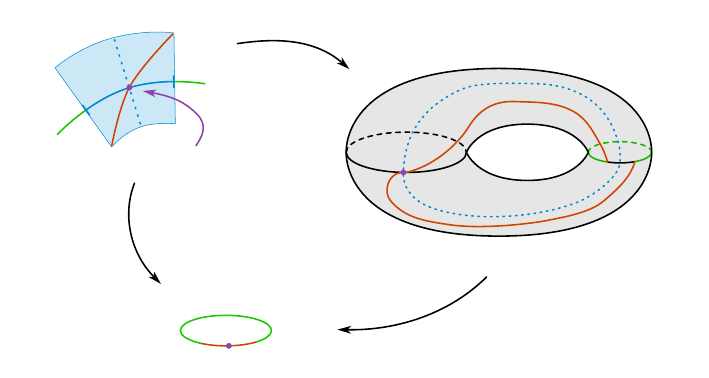}
        \put (15,20) {$p$}
        \put (55,10) {$q$}
        \put (40,49) {$f_M$}
        \put (31,40) {$X$}
        \put (24,30) {\color{vpurple} $M$}
        \put (4,48) {\color{vblue} $D(E)$}
        \put (41,31) {\color{black!75} $\Gr_{n,\infty}^\infty$}
        \put (93,31) {\color{black!75} $\RR P^\infty$}
        \put (70,46) {$\twThom(n)$}
        \put (17,6) {$\RR P^\infty$}
    \end{overpic}
    \caption{The twisted Pontryagin--Thom construction to define the collapse map. We identify $D(E)$ (blue) with the parametrised product $D(\nu M) \times_M D(\nu M)$. Then we use the diagonal map $D(\nu M) \to D(\nu M) \times_M D(\nu M)$ (red) to construct a parametrised map into $\twThom(n)$ over the tubular neighbourhood $T$ and extend it onto $X$ using the classifying map $p$ of the line bundle $\xi$ (green).}
    \label{fig:twisted_pontryagin_thom}
\end{figure}

\begin{remark}
    With the explicit model of $\BO(n)$ as the homotopy orbits of $\BSO(n)$, there is another way to think of the bundle map $G$ in Construction~\ref{construction:collapse}.
    Denote by $\widetilde{T}\to T$ the double cover induced by $\restr{\xi}{T}$, and by $\widetilde{E}$ the pullback of $E$ to $\widetilde{T}$.
    Then $\restr{\xi}{T}$ pulls back to a canonically oriented line bundle $\restr{\xi}{\tilde T}$, and the isomorphism $\varphi_T\colon \bwedge^n E \to \restr{\xi}{T}$ pulls back to an isomorphism $\bwedge^n \widetilde{E} \to \restr{\xi}{\tilde T}$. Thus $\widetilde{E}$ is oriented, and can be classified by an orientation-preserving bundle map
    \begin{equation}\label{eq:Gtilde}
    \begin{tikzcd}
        \widetilde{E} \arrow[r, "\widetilde{G}"] \arrow[d] & \tilde{\gamma}_n \arrow[d] \\
        \widetilde{T} \arrow[r, "\tilde{g}"] & \BSO(n) \mathrlap{.}
    \end{tikzcd}
    \end{equation}
    Now the bundle map in \eqref{eq:Gtilde}, together with the classifying maps $\widetilde{E}\to \E\ZZ_2$ and $\widetilde{T}\to\E \ZZ_2$, descends to the homotopy orbits, which yields a bundle map
    \[
    \begin{tikzcd}
        E \arrow[r, "G"] \arrow[d] & \tilde{\gamma}_n \sslash \ZZ_2 \arrow[d] \\
        T \arrow[r, "g"] & \BSO(n) \sslash \ZZ_2
    \end{tikzcd}
    \]
    satisfying the desired properties.
\end{remark}

Ultimately, Construction \ref{construction:collapse} allows us to define the following twisted collapse map:
\begin{equation}\label{eq:c}
    c \colon \LL_k(X; \Or_X \otimes \Or_\xi) \longrightarrow [X, \twThom(n)]_{\RR P^\infty}, \quad  \quad [(M, \varphi)] \longmapsto [f_M]_{\RR P^\infty}.
\end{equation}
To show that this map is well-defined, observe that the parametrised homotopy class of $f_M$ does not depend on the choice of tubular neighbourhood $\psi$, the lift $g$, or the bundle isomorphism $\varphi_T$.
Finally, if $(M_i, \varphi_i)$ for $i=0,1$ are $\xi$-cobordant through a cobordism $(W, \varphi)$, applying Construction~\ref{construction:collapse} to the cobordism yields a parametrised homotopy between $f_{M_0}$ and $f_{M_1}$, as required.

Combining the above Constructions \ref{construction:pontryagin_manifolds} and \ref{construction:collapse}, we can now state and prove a twisted analogue of the classical Pontryagin--Thom isomorphism.

\begin{theorem}[Twisted Pontryagin--Thom isomorphism]\label{thm:twistedPT}
    Let $p \colon X \to \RR P^\infty$ be a (smooth) map and $\xi = p^*\gamma_1$.
    Then the map
    \[
        d \colon [X, \twThom(n)]_{\RR P^\infty} \to \LL_k(X; \Or_X \otimes \Or_\xi),
    \]
    which, as defined in \textup{\eqref{eq:d}}, is induced by assigning to a parametrised map its twisted Pontryagin manifold, is inverse to the collapse map
    \[
        c \colon \LL_k(X; \Or_X \otimes \Or_\xi) \to [X, \twThom(n)]_{\RR P^\infty},
    \]
    as defined in \textup{\eqref{eq:c}}. 
    In particular, there is an isomorphism
    \[
        [X, \twThom(n)]_{\RR P^\infty} \cong \LL_k(X; \Or_X \otimes \Or_\xi).
    \]
\end{theorem}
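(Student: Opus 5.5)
We will show the two composites $d\circ c$ and $c\circ d$ are identities; well-definedness of $c$ and $d$ is taken as established in the preceding discussion.

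\emph{Step 1: $d\circ c=\id$.} Let $(M,\varphi)$ be a $\xi$-oriented submanifold and $f_M$ the map of Construction~\ref{construction:collapse}. We first check that $f_M^{-1}(z)=M$. The section $\sigma=\overline{\psi}^{-1}\circ\Delta\circ\psi$ meets the zero section of $E$ exactly over $M$, since $\Delta(v)=(v,v)$ lies in the zero section of $\beta$ precisely when $v=0$. As $G$ is a fibrewise isomorphism, $G\circ\sigma$ hits $z(\Gr^\infty_{n,\infty})$ exactly along $M$. Next, transversality holds because the vertical derivative of $\Delta$ along a fibre of $\nu M$ is the identity. Hence $\mathrm d f_M$ induces, on normal bundles, the map $\nu M=\restr{E}{M}\xrightarrow{G}\gamma_n$. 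Taking $\bwedge^n$, the commutativity of \eqref{eq:Gfactors} shows that the induced orientation $\varphi_{f_M}$ equals $\varphi$. Finally, $f_M$ is smooth near the zero section, which is all that Construction~\ref{construction:pontryagin_manifolds} requires. Therefore $d(c[M,\varphi])=[M,\varphi]$ on the nose.

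\emph{Step 2: $c\circ d=\id$.} This is the main step. Let $f$ be smooth near the zero section and transverse to it, and set $M=M_f$. We aim to construct a parametrised homotopy over $\RR P^\infty$ from $f$ to $f_{M}$ in two stages.

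\begin{enumerate}[(a)]
\item \emph{Shrinking away from $M$.} Choose a small closed tubular neighbourhood $T$ of $M$ on which $f$ is a fibrewise embedding into $\mathring D(\gamma_n)$ transverse to $z$. Compose $f$ with the fibrewise radial expansion $\mathring D(\gamma_n)\to\twThom(n)$ that pushes $\{|v|\ge\varepsilon\}$ out to $S(\gamma_n)$ and hence onto $\BO(1)$ via $w_1\circ\pi$. This expansion is fibrewise over $\RR P^\infty$, because $w_1\circ\pi$ is constant along the radial lines, so it gives a parametrised homotopy. Its endpoint $f'$ agrees with $p=q\circ f$ outside $\mathring T$, since $q\circ f=p$ forces any point sent to $\BO(1)$ to land at $p(x)$. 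We keep $\varepsilon$ small so that $f'$ remains unchanged near $M$ to first order.

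\item \emph{Linearising inside $T$.} Inside $T$ we compare $f'$ with $G\circ\sigma$, taking $E$ to be the bundle classified by $g:=\pi_{\gamma_n}\circ f\circ(\text{retraction})$, so that $G$ is induced by $\mathrm df$ along $M$. Both maps are then sections, over $T$, of the pulled-back bundle with the same zero set and the same normal derivative. The straight-line homotopy $t\mapsto f'(x)/t$ (the Pontryagin rescaling), or equivalently a convex combination in the fibres of $g^*\gamma_n$, connects them.
\end{enumerate}

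Every homotopy here must cover $p$. We ensure this by working fibrewise in $\gamma_n$ over the fixed base map $g$, and we use a vertical homotopy of $\pi_{\gamma_n}\circ f|_T$ to $g$ through lifts of $p|_T$, using the fibration $w_1$. We expect the main obstacle to be keeping track of the base: $\pi_{\gamma_n}\circ f|_T$ is not literally $g\circ\pi_{\nu M}\circ\psi$. If this proves awkward, we would instead apply Lemma~\ref{lem:forgetful_homotopy_classes_bijection}: first produce an ordinary homotopy between $f$ and $f_M$ whose composite with $q$ is homotopic to $p$, then lift it to a parametrised homotopy using that $q$ is a fibration.

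Combining Steps 1 and 2, $c$ and $d$ are mutually inverse bijections, which gives the isomorphism $[X,\twThom(n)]_{\RR P^\infty}\cong\LL_k(X;\Or_X\otimes\Or_\xi)$.
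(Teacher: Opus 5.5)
Your Step~1 is correct. It is more detailed than the paper, which leaves $d\circ c=\id$ to the reader. Step~2 has the same two-stage shape as the paper's argument: push off, then a straight-line homotopy. The gap is in Step~2(b), exactly where you expect trouble, and neither of your repairs closes it.

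\textbf{The base mismatch.} With $g:=\pi_{\gamma_n}\circ f\circ r$, where $r\colon T\to M$ is the retraction, you get $w_1\circ g=q\circ f\circ r=p\circ r$. This is not $\restr{p}{T}$ in general, which has three consequences:
\begin{itemize}
\item this $g$ is not an admissible classifying map in Construction~\ref{construction:collapse};
\item $G\circ\sigma$ covers $p\circ r$ rather than $p$;
\item $G(\sigma(x))$ and $f'(x)$ lie in different fibres of $\gamma_n$, so they are not sections of one bundle and a fibrewise convex combination is undefined.
\end{itemize}
A vertical homotopy from $\pi_{\gamma_n}\circ\restr{f}{T}$ to $g$ through lifts of $\restr{p}{T}$ cannot exist, because $g$ is not such a lift.

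\textbf{The fallback.} It cannot rest on injectivity of the forgetful map in Lemma~\ref{lem:forgetful_homotopy_classes_bijection}. What you need is that $q\circ H$ be homotopic, relative to $X\times\{0,1\}$, to the constant homotopy at $p$. Knowing only that $f$ and $f_M$ are homotopic as unparametrised maps is not enough. The monodromy of $q$ around the generator of $\pi_1(\RR P^\infty)$ is orientation reversal on $\MSO(n)$. Hence the maps corresponding to $(M,\varphi)$ and $(M,-\varphi)$ are homotopic as ordinary maps but in general not over $\RR P^\infty$. For instance, with $X=S^n$, $n\ge 2$ and $p$ constant, the forgetful map is $\ZZ\to\ZZ/\{\pm1\}$. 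That sign is exactly what the theorem must track.

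\textbf{The rescaling.} The formula $t\mapsto f'(x)/t$ does not converge to the linearisation; it pushes everything off $M$ into $\RR P^\infty$. The genuine Pontryagin rescaling $(y,v)\mapsto f'(y,tv)/t$ covers $p(y,tv)$ rather than $p(y,v)$, so it is not over $\RR P^\infty$ either.

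\textbf{The missing idea} is the paper's choice of classifying map. Put $E:=(\pi_{\gamma_n}\circ\restr{f}{T})^*\gamma_n$, i.e.\ use the base map of $f$ itself. Then:
\begin{itemize}
\item $E$ is automatically a lift of $\restr{p}{T}$, because $w_1\circ\pi_{\gamma_n}\circ f=q\circ f=p$;
\item $\mathrm{d}f$ identifies $\restr{E}{M}$ with $\nu M$, and you take the bundle map $\rho\colon E\to\nu M$ covering the retraction to be $(\mathrm{d}f)^{-1}$ over $M$;
\item by independence of choices in Construction~\ref{construction:collapse}, $G\circ\sigma$ represents $c[M,\varphi]$;
\item $G(\sigma(x))$ and the pushed-off map both lie in the single fibre $(\gamma_n)_{\pi_{\gamma_n}(f(x))}$, so any straight-line homotopy in these fibres is automatically over $\RR P^\infty$.
\end{itemize}

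\textbf{Stage (a).} You also need the pushed-off map to stay in the open disc bundle wherever you take the convex combination. Your $R_\varepsilon$ already sends part of $\mathring T$ into $\RR P^\infty$, where a convex combination with $G\circ\sigma$ is undefined. The paper's position-dependent push $F_{t\lambda(x)}$ keeps $h_1(U)\subset\mathring{D}(\gamma_n)$ for precisely this reason.
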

\begin{proof}
    First, we prove that the composition $c \circ d$ is the identity on $[X, \twThom(n)]_{\RR P^\infty}$.
    In other words, we need to show that a parametrised map $g \colon X \to \twThom(n)$ with associated twisted Pontryagin manifold $(M_g, \varphi_g)$ is parametrised homotopic to $f_{M_g}$ as constructed in~\eqref{eq:collapse_map_construction}.
    Towards this, let $\psi \colon U \to \mathring{D}(\nu M_g)$ be a tubular neighbourhood of $M_g$ such that $g(U) \subset \mathring{D}(\gamma_n)$.
    We first define a parametrised homotopy $h \colon X \times [0,1] \to \twThom(n)$ such that $h_0 = g$, and $h_1(U) \subset \mathring{D}(\gamma_n)$ as well as $h_1(x) = p(x)$ for $x \notin U$.
    To this end, notice that the parametrised subspace $A \coloneq \twThom(n) - z(\Gr_{n,\infty}^\infty)$ is parametrised contractible, i.e., there is a parametrised deformation retraction $F \colon A \times [0,1] \to A$ from $A$ onto $\RR P^\infty$ given by
    \[
        F_t(y) = \begin{cases}
                \frac{y}{\left\| y \right\|}t + (1-t)y & \text{if $y \in \mathring{D}(\gamma_n) - z(\Gr_{n,\infty}^\infty)$,} \\
                y & \text{if $y \in \RR P^\infty.$}
            \end{cases}
    \]
    Set $\lambda(x)\coloneq \left\|\psi(x)\right\|$ and define
    \[
        h_t(x) = \begin{cases}g(x) & \text{if $x \in M_g$,} \\ F_{t\lambda(x)}(g(x)) & \text{if $x \in U-M_g$,} \\ F_t(g(x)) & \text{if $x \notin U$.}\end{cases}
    \]
    Next, we wish to define a parametrised homotopy $h' \colon X \times [0,1] \to \twThom(n)$ between $h_1$ and~$f_{M_g}$.
    For this purpose, consider the vector bundle $E \coloneq (\pi_{\gamma_n} \circ \restr{g}{U})^* \gamma_n$ over $U$.
    Since $g$ gives rise to a bundle map $G \colon \mathring{D}(E) \to \mathring{D}(\gamma_n)$,
    the differential of $g$ induces an isomorphism between $\nu M_g$ and $\restr{E}{M_g}$, which we will conveniently denote by $\mathrm{d}g$.
    Similar to the construction of the collapse map, choose a bundle map $\rho$ given by
    \[
    \begin{tikzcd}
        \mathring{D}(E) \arrow[r] \arrow[d, "\pi_E"'] \arrow[rr, "\rho"', bend left=30] & \mathring{D}\left(\restr{E}{M_g}\right) \arrow[r, "\left(\mathrm{d}g\right)^{-1}"] \arrow[d] & \mathring{D}(\nu M_g) \arrow[ld] \\
        U \arrow[r, "\sim", "\pi_{\nu M_g} \circ \psi"']  & M_g   & 
    \end{tikzcd}
    \]
    and identify $\overline{\psi} \colon \mathring{D}(E) \xrightarrow{\cong} \mathring{D}(\nu M_g) \times_{M_g} \mathring{D}(\nu M_g)$ as in~\eqref{eq:identify_diagonal_pullback}.
    Denote by
    \[
        \sigma \coloneq \overline{\psi}^{-1} \circ \Delta \circ \psi \colon U \to D(E)
    \]
    the section of $\pi_E$ on $U$ induced by the diagonal map $\Delta \colon \mathring{D}(\nu M_g) \to \mathring{D}(\nu M_g) \times_{M_g} \mathring{D}(\nu M_g)$.
    Then the desired parametrised homotopy $h' \colon X \times [0,1] \to \twThom(n)$ is defined as follows
    \begin{align*}
        h'_t(x) = \begin{cases}
            t\, G(\sigma(x)) + (1-t)\, h_1(x) & \text{if $x \in U$,} \\
            p(x) &  \text{if $x \notin U$.}
        \end{cases}
    \end{align*}
    In conclusion, $g$ is parametrised homotopic to $f_{M_g}$ as desired.
  
    Second, it remains to show that the composition $d \circ c$ is the identity on ${\LL_k(X; \Or_X \otimes \Or_\xi)}$.
    It suffices to check that $f_M^{-1}(z) = M$ and $\varphi_{f_M} = \varphi$ for a given $\xi$-oriented submanifold~$(M, \varphi)$.
    But this follows immediately from the construction, so we leave the details to the reader.
\end{proof}

\subsection{Realisability condition}\label{subsec:realisability}

Similar to the classical case, there is a realisation map
\begin{equation}\label{eq:realisation_map}
    \mu \colon \LL_k(X;\Or_X \otimes \Or_\xi) \to H_k(X;\Or_X \otimes \Or_\xi)
\end{equation}
given as follows.
Let $(M, \varphi)$ be a $\xi$-oriented submanifold of $X$ and $\iota \colon M \hookrightarrow X$ be its inclusion.
By Lemma~\ref{lemma:prop_loc_sys} there are canonical isomorphisms
\begin{align*}
    \iota^*\left( \Or_X \otimes \Or_\xi \right) &\cong \iota^*\Or_X \otimes \iota^* \Or_\xi \\
        &\cong \Or_M \otimes \Or_{\nu M} \otimes \Or_{\iota^*\xi}.
\end{align*}
Additionally, the framing $\varphi \colon \bwedge^n\!\nu M \to \iota^*\xi$ induces an isomorphism from $\Or_{\nu M} \otimes \Or_{\iota^*\xi}$ to the trivial coefficient system.
Altogether, we obtain an isomorphism $\iota^*(\Or_X \otimes \Or_\xi) \cong \Or_M$, which yields a well-defined homomorphism $\iota_* \colon H_k(M; \Or_M) \to H_k(X; \Or_X \otimes \Or_\xi)$.
In particular, the twisted fundamental class $[M]_\tw$ -- i.e.\ the canonical generator of $H_k(M; \Or_M)$ -- is mapped to $H_k(X; \Or_X \otimes \Or_\xi)$, and its image depends only on the cobordism class.
Combining the above yields a well-defined map $\mu([M, \varphi]) = \iota_*[M]_\tw$.
Lastly, we remark that the definition of the realisation map in \eqref{eq:realisation_map} justifies our choice of notation $\LL_k(X;\Or_X \otimes \Or_\xi)$ for the set of \(k\)-dimensional \(\xi\)-oriented cobordism classes (as promised at the end of Definition \ref{defn:twisted_L_equivalence}).

The preceding lemma is a crucial ingredient in our derivation of the realisability condition.

\begin{lemma}\label{lem:commutativity_realisability}
    Let $X$ be a closed $(n+k)$-dimensional manifold, $p \colon X \to \RR P^\infty$ be a (smooth) map, and $\xi = p^* \gamma_1$.
    Then following diagram commutes
    \begin{equation*}
        \begin{tikzcd}
            {[X, \twThom(n)]_{\RR P^\infty}} \arrow[d, "(\placeholder)^*(u_n^\tw)"'] \arrow[r, "d", "\cong"'] & \LL_k(X;\Or_X \otimes \Or_\xi) \arrow[d, "\mu"]           \\
            H^n(X; \mathcal{O}_\xi) \arrow[r, "\cong"' ,"{\cap[X]_\tw}"]       & H_k(X;\mathcal{O}_X \otimes \mathcal{O}_\xi) \mathrlap{,}
        \end{tikzcd}
    \end{equation*}
    where $u_n^\tw\in H^n(\twThom(n),\RR P^\infty;\mathcal{L}_\ZZ^{\mathrm{Th}})$ is the universal twisted Thom class defined in \textup{Subsection~\ref{subsec:twisted_thom_space_class}}.
\end{lemma}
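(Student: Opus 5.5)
Take a class in $[X,\twThom(n)]_{\RR P^\infty}$ and represent it, via Theorem~\ref{thm:twistedPT}, by the collapse map $f=f_M$ of a $\xi$-oriented submanifold $(M,\varphi)$, with $\iota\colon M\hookrightarrow X$. Since $c$ and $d$ are mutually inverse, it suffices to prove
\[
    f_M^*(u_n^\tw)\cap[X]_\tw=\iota_*[M]_\tw .
\]
By Lemma~\ref{lem:propUmkehr} (with $\partial X=\emptyset$ and $\AA=\Or_\xi$), the right-hand side equals $\iota^!(1)\cap[X]_\tw$. Here $1\in H^0(M;\iota^*\Or_\xi\otimes\Or_{\nu M})$ is the class corresponding to the unit under the trivialisation $\iota^*\Or_\xi\otimes\Or_{\nu M}\cong\underline{\ZZ}$ induced by $\varphi$. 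We also need the identification $\Or_{\nu\iota}\otimes\Or_M\cong\iota^*\Or_X$ used there to be compatible with the isomorphism $\iota^*(\Or_X\otimes\Or_\xi)\cong\Or_M$ used to define $\mu$. Both are built from the canonical isomorphisms of Lemma~\ref{lemma:prop_loc_sys}, so I expect this to be a direct bookkeeping check. It therefore remains to show $f_M^*(u_n^\tw)=\iota^!(1)$ in $H^n(X;\Or_\xi)$, and this is a push-pull statement.

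First factor $f_M^*$ through $H^n(X,X-\mathring{T};\Or_\xi)$. By \eqref{eq:collapse_map_construction}, $f_M$ sends $X-\mathring{T}$ into $\RR P^\infty$, so $f_M$ is a map of pairs $(X,X-\mathring T)\to(\twThom(n),\RR P^\infty)$. The coefficients match because $q\circ f_M=p$ gives $f_M^*\mathcal{L}_\ZZ^{\mathrm{Th}}=p^*\Or_{\gamma_1}=\Or_\xi$. Excision identifies $H^n(X,X-\mathring T;\Or_\xi)$ with $H^n(T,\partial T;\Or_\xi|_T)$. Restricted to $(T,\partial T)$, the map $f_M$ is the composite
\[
    (T,\partial T)\xrightarrow{\sigma}(D(E),S(E))\xrightarrow{G}(D(\gamma_n),S(\gamma_n)),
\]
followed by $r$. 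So by the definition of $u_n^\tw$ (through $r^*$), we get $f_M^*u_n^\tw|_{(T,\partial T)}=\sigma^*G^*u_{\gamma_n}$.

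Next, naturality of twisted Thom classes under the bundle map $G$ gives $G^*u_{\gamma_n}=u_E$. For this we must check that the coefficient identification $\pi_E^*\Or_E\cong \pi_E^*\Or_\xi$ induced by $G$ agrees with the one coming from $\varphi_T$. That holds because $\bwedge^nG$ factors $\varphi$, as in \eqref{eq:Gfactors}.

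The remaining and main step is to show $\sigma^*u_E=u_{\nu M}$ under $\psi$, i.e.\ that $\sigma$ is a section of $E$ transverse to the zero section with zero set $M$ and the correct orientation. Transversality and the zero set are clear: under $\overline\psi$, the zero section of $E$ corresponds to pairs $(v,0_{\pi(v)})$, the section $\sigma$ corresponds to the diagonal $\Delta$, and these meet exactly along $M$. The key claim is that, after fibrewise straight-line homotopy in $D(\nu M)\times_M D(\nu M)$, $\sigma$ is homotopic as a map of pairs to the map $\psi$ composed with the identification $D(\nu M)\cong D(E|_M)$. Concretely, use the homotopy $(v,tv+(1-t)v)$ relative to the sphere bundle, or equivalently note that $\beta\circ\overline\psi\circ\sigma=\psi$. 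Then $\sigma^*u_E=\psi^*u_{\nu M}$ by naturality. By Lemma~\ref{lem:ThomUmkehr} and Definition~\ref{defn:Umkehr_map}, this is exactly $\iota^!(1)$ after excision.

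The delicate part is purely about signs and coefficient conventions: making sure the twisted Thom class pulled back along $\beta\circ\overline\psi$ is $u_{\nu M}$ and not its negative, given the canonical choices made in Subsection~\ref{subsec:twistedThomisos}. I would settle this fibrewise at a point of $M$, where everything reduces to the identity of $\RR^n$.
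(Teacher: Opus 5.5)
Your argument is correct, but it is organised differently from the paper's.

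\textbf{The paper's route.} It never passes through the collapse map. It takes an arbitrary representative $f$, makes it smooth and transverse to the zero section, and chooses a closed tubular neighbourhood $T$ of $M_f$ on which $f$ is a map of pairs $(T,\partial T)\to(D(\gamma^L_{n,N}),S(\gamma^L_{n,N}))$. It then applies the push-pull formula (Proposition~\ref{prop:pushpull}) to the transverse square $M_f\to\Gr^L_{n,N}$, $T\to D(\gamma^L_{n,N})$. Combined with Lemma~\ref{lem:ThomUmkehr} ($z^!(1)=r^*u_n^\tw$), a diagram chase gives $f^*u_n^\tw=\iota^!(1)$.

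\textbf{Your route.} You use Theorem~\ref{thm:twistedPT} to replace $f$ by the explicit $f_M=r\circ G\circ\sigma$ on $T$. Only $c\circ d=\mathrm{id}$ is actually needed here: $[f]=[f_M]$ with $(M,\varphi)=d[f]$. You then check the Thom-class compatibility by hand:
\begin{itemize}
\item $G^*u_{\gamma_n}=u_E$ by naturality, with the coefficients matched through \eqref{eq:Gfactors};
\item $\sigma^*u_E=\sigma^*\rho^*u_{\nu M}=\psi^*u_{\nu M}$, because $\rho\circ\sigma=\beta\circ\overline\psi\circ\sigma=\psi$.
\end{itemize}

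\textbf{Comparison.} Your version needs no transversality and no compatible choice of tubular neighbourhoods inside the lemma itself. It also spells out exactly the step that the paper's proof of Proposition~\ref{prop:pushpull} simply assumes, namely that the pullback carries Thom class to Thom class. The cost is that the lemma now rests on the bijection theorem, not just on the definition of $d$.

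\textbf{Two small remarks.}
\begin{itemize}
\item The homotopy $(v,tv+(1-t)v)$ you wrote is constant. You presumably meant $(tv,v)$, which does connect $\Delta\circ\psi$ to $\psi$ followed by $D(\nu M)\cong D(E|_M)\subset D(E)$ as maps of pairs. The identity $\rho\circ\sigma=\psi$ makes this homotopy unnecessary anyway.
\item Your closing sign worry needs no separate fibrewise check. All Thom classes involved are the canonical ones, and they are natural under fibrewise-isomorphic bundle maps such as $G$ and $\rho$. What remains is only the coefficient bookkeeping via $\varphi$, which you have already flagged.
\end{itemize}
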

\begin{proof}
    Let $f \colon X\to \twThom(n)$ be a parametrised map over $\RR P^\infty$ representing an element $[f]_{\RR P^\infty}$ in the top-left corner of the diagram, which we may also regard as a parametrised map of pairs $f\colon (X,\emptyset)\to (\twThom(n),\RR P^\infty)$. By Lemma \ref{lem:propUmkehr}, it suffices to show that $f^*(u_n^\tw)=\iota^!(1)$, where $\iota \colon M_f\hookrightarrow X$ is the twisted Pontryagin manifold of $f$.
    
    To start, we may assume, as in Subsection~\ref{subsec:Pontryagin}, that $f$ maps smoothly to $\mathring{D}(\gamma^L_{k,N})$ and intersects $\Gr_{k,N}^{L} \subset \mathring{D}(\gamma^L_{k,N})$ transversely. Moreover, we can choose a closed tubular neighbourhood $T$ of $\iota\colon M_f\hookrightarrow X$ in such a way that $f$ induces a map of pairs $\restr{f}{T}\colon  (T, \partial T)\to (D(\gamma^L_{k,N}),S(\gamma^L_{k,N}))$. 
    Then, by applying Proposition \ref{prop:pushpull} to the transverse pullback square
    \begin{equation}
    \begin{tikzcd}
        M_f\arrow[r,"\restr{f}{M_f}"] \arrow[d,"\iota_T"', hook] \arrow[rd, phantom, "\lrcorner", very near start] & \Gr_{n,N}^{L} \arrow[d,"z", hook] \\
        T \arrow[r,"\restr{f}{T}"] &  
        D(\gamma^L_{n,N}), 
    \end{tikzcd}
    \end{equation}
     we deduce that the top square the following diagram commutes:
    \begin{equation}\label{eq:MUX}
        \begin{tikzcd}[row sep=1.25em]
            H^0(M_f;\ZZ) \arrow[d,"\iota_T^!"'] & H^0(\Gr_{n,N}^{L};\ZZ) \arrow[l,"\restr{f}{M_f}^*"] \arrow[d,"z^!"]\\
            H^n(T,\partial T;\restr{f}{T}^*\pi_{\gamma^L_{n,N}}^*\Or_{\gamma^L_{n,N}}) \arrow[d,"\cong"'] & H^n(D(\gamma^L_{n,N}),S(\gamma^L_{n,N});\pi_{\gamma^L_{n,N}}^*\Or_{\gamma^L_{n,N}}) \arrow[l,"\restr{f}{T}^*"] \arrow[d,"\cong"]\\
            H^n(T,\partial T;\restr{\Or_\xi}{T}) & H^n(D(\gamma^L_{n,N}),S(\gamma^L_{n,N});r^*\mathcal{L}_\ZZ^{\mathrm{Th}}) \arrow[l,"\restr{f}{T}^*"] \\
            H^n(X,X-\mathring{T};\Or_\xi) \arrow[u,"\cong"] \arrow[d] & H^n(\twThom(n),\RR P^\infty;\mathcal{L}_\ZZ^{\mathrm{Th}}).
            \arrow[u,"r^*"'] \arrow[l,"f^*"] \arrow[dl,"f^*", bend left=12]\\
            H^n(X;\Or_\xi) & 
        \end{tikzcd}
    \end{equation}
     Here the map $r:(D(\gamma^L_{n,N}),S(\gamma^L_{n,N}))\to (\twThom(n),\RR P^\infty)$ is the restriction of the map $r$ from Subsection \ref{subsec:twisted_thom_space_class}, where we have chosen $L$ and $N$ to be large enough to ensure that \(r\) induces an isomorphism on $n$-th degree cohomology. Note that we have made use of the coefficient isomorphisms $f^*\mathcal{L}_\ZZ^{\mathrm{Th}}\cong\Or_\xi$ and $r^*\mathcal{L}_\ZZ^{\mathrm{Th}}\cong \pi_{\gamma^L_{n,N}}^*\Or_{\gamma^L_{n,N}}$.
     
     Finally, since the left-hand vertical composition equals $\iota^!$, Lemma \ref{lem:ThomUmkehr} implies that \[z^!(1)=r^*(u_n^\tw)\in H^n(D(\gamma^L_{n,N}),S(\gamma^L_{n,N});r^*\mathcal{L}_\ZZ^{\mathrm{Th}}),\] where $1\in H^0(\Gr_{n,N}^{L};\ZZ)$ is the unit element. 
     Chasing this unit element around diagram \eqref{eq:MUX} computes $\iota^!(1)=f^*(u_n^\tw)$, as claimed.
\end{proof}

\begin{theorem}\label{thm:representability}
    Let $X^{n+k}$ be a closed $(n+k)$-dimensional manifold with $p \colon X \to \RR P^\infty$ smooth.
    Let $\AA$ be the local system of integer coefficients induced by $p$, that is $\AA = p^* \mathcal{L}_\ZZ$, where $\mathcal{L}_\ZZ$ denotes the universal coefficient system on $\RR P^\infty$ in \textup{Definition~\ref{def:universal_coefficient_system}}.
    Then, a cohomology class $\alpha \in H^n(X;\AA)$ is realisable if and only if there is a parametrised map $f \colon X\to \twThom(n)$ over $\RR P^\infty$ such that $f^*(u_n^\tw)=\alpha$.
\end{theorem}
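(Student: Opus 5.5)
The plan is to combine the twisted Pontryagin--Thom isomorphism (Theorem~\ref{thm:twistedPT}) with the commutative square of Lemma~\ref{lem:commutativity_realisability}. The first step is to match coefficient systems. With $\xi = p^*\gamma_1$, Remark~\ref{rmk:universal_orientation_system} and Lemma~\ref{lemma:prop_loc_sys}(ii) give a canonical identification $\AA = p^*\mathcal{L}_\ZZ \cong p^*\Or_{\gamma_1}\cong \Or_\xi$. Hence $H^n(X;\AA)\cong H^n(X;\Or_\xi)$ and $H_k(X;\AA\otimes\Or_X)\cong H_k(X;\Or_X\otimes\Or_\xi)$. Under these identifications, twisted Poincar\'e duality $\cap[X]_\tw$ is precisely the bottom isomorphism of Lemma~\ref{lem:commutativity_realisability}. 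Also, for a parametrised $f$ we have $f^*\mathcal{L}^{\mathrm{Th}}_\ZZ = f^*q^*\mathcal{L}_\ZZ = p^*\mathcal{L}_\ZZ=\AA$, so $f^*(u_n^\tw)$ lands in $H^n(X;\AA)$ as required.

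For the ``if'' direction, suppose $f$ is parametrised over $\RR P^\infty$ with $f^*(u_n^\tw)=\alpha$. Take the twisted Pontryagin manifold $(M_f,\varphi_f)=d[f]$. By Lemma~\ref{lem:commutativity_realisability}, $\mu(d[f]) = \alpha\cap[X]_\tw$. By the construction of $\mu$, the embedding $\iota\colon M_f\hookrightarrow X$ comes with an isomorphism $\iota^*(\AA\otimes\Or_X)\cong\Or_M$ satisfying $\iota_*[M_f]_\tw = \alpha\cap[X]_\tw$. This is exactly the definition of realisability of $\alpha$.

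For the ``only if'' direction, suppose $\iota\colon M\hookrightarrow X$ realises $a=\alpha\cap[X]_\tw$, with a given isomorphism $\iota^*(\AA\otimes\Or_X)\cong\Or_M$. The main step is to upgrade this to a $\xi$-orientation $\varphi\colon\bwedge^n\nu M\to \xi|_M$ whose induced coefficient isomorphism in the definition of $\mu$ agrees with the given one. I would argue as follows. By Lemma~\ref{lemma:prop_loc_sys}(iii),(iv) we have $\iota^*\Or_X\cong \Or_M\otimes\Or_{\nu M}$, so the given isomorphism, together with Proposition~\ref{prop:local_systems}(ii), yields an isomorphism $\Or_{\nu M}\otimes\Or_{\xi|_M}\cong\underline{\ZZ}$, equivalently $\Or_{\bwedge^n\nu M}\cong\Or_{\xi|_M}$ (Lemma~\ref{lemma:prop_loc_sys}(i)). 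Line bundles are determined by their orientation systems: an isomorphism of orientation systems $\Or_L\cong\Or_{L'}$ is a nowhere-vanishing section of $\Or_{L^*\otimes L'}$, i.e.\ an orientation of $L^*\otimes L'$, hence a homotopically unique positive section, hence a bundle isomorphism $L\to L'$ inducing the given map on orientation systems. This produces $\varphi$, and with it $\mu([M,\varphi])=\iota_*[M]_\tw=a$ by construction.

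To finish, set $f=c[M,\varphi]$. By Theorem~\ref{thm:twistedPT} we have $d[f]=[M,\varphi]$, and Lemma~\ref{lem:commutativity_realisability} gives $f^*(u_n^\tw)\cap[X]_\tw = a = \alpha\cap[X]_\tw$. Injectivity of twisted Poincar\'e duality then gives $f^*(u_n^\tw)=\alpha$.

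I expect the main obstacle to be the sign bookkeeping in the ``only if'' direction. One must check that the isomorphism of local systems built from $\varphi$ in the definition of $\mu$ coincides with the one originally given, rather than differing by a sign on some component of $M$; otherwise the realised class could come out as $-a$ on that component. Since all isomorphisms in Lemma~\ref{lemma:prop_loc_sys} are canonical, this check should be a local verification over contractible charts. If a mismatch does occur on a component, it can be repaired by replacing $\varphi$ with $-\varphi$ there.
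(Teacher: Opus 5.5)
Your proposal is correct and follows the paper's route. The paper deduces the theorem directly from Lemma~\ref{lem:commutativity_realisability}, tacitly using the bijection of Theorem~\ref{thm:twistedPT} and the injectivity of twisted Poincar\'e duality exactly as you do, while you additionally make explicit the step the paper leaves implicit: an abstract isomorphism $\iota^*(\AA\otimes\Or_X)\cong\Or_M$ can be promoted (up to a sign on each component) to a $\xi$-orientation $\varphi$, so that the image of $\mu$ is precisely the set of realisable classes.
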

\begin{proof}
    The statement follows immediately from Lemma~\ref{lem:commutativity_realisability}.
\end{proof}

\section{Postnikov decomposition of the twisted Thom space}
\label{sec:postnikov_decomposition}

\subsection{Generalities on Postnikov towers}
\label{subsec:general_postnikov}

All spaces are assumed to be compactly generated weak Hausdorff, and all maps are assumed to be continuous.
Let $X\in \mathcal{K}^*_G$ be a pointed $G$-space.
Recall that a space $X$ is called \emph{$G$-connected} if the fixed point set $X^H$ is path-connected for every subgroup $H\le G$.
We refer to such an $X$ as \emph{$G$-simple} if $\pi_1(X^H)$ acts trivially on $\pi_i(X^H)$ for all $H\le G$ and all $i\ge 1$ (in particular, $\pi_1(X^H)$ is abelian).
According to~\cite[Theorem II.1.2]{May96}, a $G$-simple space $X$ admits an equivariant Postnikov tower. Roughly speaking, this is a diagram of spaces and maps in $\mathcal{K}^*_G$, which, on taking $H$-fixed points, gives a Postnikov tower for the fixed point space $X^H$.
What is important for us here is that upon taking $H=\{e\}$ to be the trivial subgroup of $G$, we obtain a Postnikov tower for $X$ in the category $\mathcal{K}^*_G$.

We now make this more explicit.
Recall that we impose the ``naive'' model structure on $\mathcal{K}_G^*$ as described in Subsection~\ref{subsec:quillen_pair_parametrised_Gspaces}.
So, whenever we talk about fibrations, $n$-equivalences or weak equivalences, we mean it in the sense of $\mathcal{K}^*$.
Given a $G$-module $\pi$ and integer $n \ge 1$, we define a \emph{$G$-Eilenberg--MacLane space of type $(\pi,n)$} to be a connected based $G$-CW complex $K(\pi,n)$ such that $\pi_n(K(\pi,n))\cong \pi$ as $G$-modules and all other homotopy groups are trivial.
Such spaces exist \cite[Example V.4.1]{May96}. 
Note that if $X\in \mathcal{K}^*_G$, then each homotopy group $\pi_i(X)$ is a $G$-module.

\begin{definition}\label{defn:G-postnikov-tower}
    Let $X \in \mathcal{K}^*_G$ be a pointed $G$-space.
    By a \emph{Postnikov tower for $X$ in $\mathcal{K}^*_G$} we mean a diagram in $\mathcal{K}^*_G$ consisting of maps $q_n \colon X\to P[n]$ and fibrations $p_{n+1}\colon P[n+1]\to P[n]$ for $n\ge1$,  satisfying:
    \begin{enumerate}[(i)]
        \item $p_{n+1}q_{n+1}=q_n$;
        \item $q_n$ is an $(n+1)$-equivalence, that is the induced map
            \[
                (q_n)_* \colon \pi_i(X) \to \pi_i(P[n])
            \]
            is an isomorphism for $i \le n$ and an epimorphism for $i=n+1$;
        \item $p_{n+1} \colon P[n+1]\to P[n]$ is induced from the based path fibration
        \[
            P K(\pi_{n+1}(X),n+2)\to K(\pi_{n+1}(X),n+2)
        \]
        via a based $G$-map $\kappa_n \colon P[n]\to K(\pi_{n+1}(X),n+2)$. 
    \end{enumerate}
\end{definition}

Recall that if $n\ge2$ and $K(\pi,n)$ is a $G$-Eilenberg--MacLane space for the module $\pi$, then
\[
    K(\pi,n) \sslash G = L_G(\pi,n)
\]
is a generalised Eilenberg--MacLane space in the sense of Subsection~\ref{subsec:genEMspaces}.
Furthermore, applying homotopy quotients to the based path fibration $K(\pi,n-1)\to P K(\pi,n)\to K(\pi,n)$ gives a fibration $K(\pi,n-1)\to \overline{P} L(\pi,n)\to L(\pi,n)$ in $\mathcal{K}^*_{/\B G}$.
These spaces and fibrations are the building blocks for Postnikov theory in the non-simple case, as described for example in~ \cite{McClendon71, Robinson72, Bausum75, Pollina82}.
In fact we have the following result:
\begin{prop}\label{prop:HQPostnikov}
    Let $X$ be a $G$-simple based $G$-space.
    Applying homotopy quotients to a Postnikov tower for $X$ in $\mathcal{K}^*_G$ as described in \textup{Subsection~\ref{subsec:quillen_pair_parametrised_Gspaces}} gives a Postnikov tower of retractive spaces for $X\sslash G \in \mathcal{K}^*_{/\B G}$. 
\end{prop}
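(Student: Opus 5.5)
We apply the homotopy quotient functor $(\placeholder)\sslash G \colon \mathcal{K}^*_G \to \mathcal{K}^*_{/\B G}$ of Subsection~\ref{subsec:quillen_pair_parametrised_Gspaces} to a Postnikov tower $\{q_n\colon X\to P[n],\ p_{n+1}\colon P[n+1]\to P[n],\ \kappa_n\}$ for $X$ in $\mathcal{K}^*_G$, which exists by \cite[Theorem II.1.2]{May96}. We then check the three defining conditions of Definition~\ref{defn:G-postnikov-tower}, read in the category of retractive spaces over $\B G$, one at a time. In that setting the Eilenberg--MacLane building blocks are the generalised spaces $L_G(\pi_{n+1}(X),n+2)$ together with the fibrations $\overline{P}L \to L$.

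Condition (i) is immediate from functoriality: $(p_{n+1}\sslash G)\circ(q_{n+1}\sslash G)=q_n\sslash G$. Each $p_{n+1}\sslash G$ is a fibration, and each $P[n]\sslash G$ is a fibrant retractive space, because $(\placeholder)\sslash G$ preserves fibrations by Lemma~\ref{lem:propsHQ}\,(iv). The sections come from the $G$-fixed basepoints, and all maps involved are based $G$-maps, so the induced maps preserve sections. For condition (ii) we again use Lemma~\ref{lem:propsHQ}\,(iv): $(\placeholder)\sslash G$ preserves $n$-equivalences, so $q_n\sslash G$ is an $(n+1)$-equivalence. Concretely, compare the long exact homotopy sequences of the fibrations $X\to X\sslash G\to \B G$ and $P[n]\to P[n]\sslash G\to\B G$ via the five lemma. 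For low degrees and at $\pi_0$ the map is the identity on $\B G$ together with sections, so no basepoint subtleties arise.

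Condition (iii) is the main step. By construction $P[n+1]$ is the pullback in $\mathcal{K}^*_G$ of $PK(\pi_{n+1}(X),n+2)\to K(\pi_{n+1}(X),n+2)$ along $\kappa_n$. Lemma~\ref{lem:propsHQ}\,(iii) says $(\placeholder)\sslash G\colon\mathcal{K}^*_G\to\mathcal{K}^*_{/\B G}$ is a right adjoint, so it preserves pullbacks. Hence $P[n+1]\sslash G$ is the pullback over $\B G$ of $\overline{P}L_G(\pi_{n+1}(X),n+2)\to L_G(\pi_{n+1}(X),n+2)$ along the map of retractive spaces $\kappa_n\sslash G$. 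We also need to note that pullbacks in $\mathcal{K}^*_{/\B G}$ are the fibre products over the terminal object $\B G$. The obstacle I anticipate is identifying the fibration $\overline{P}L\to L$ with the fibration that the non-simple Postnikov literature (\cite{McClendon71, Robinson72}) uses, since we have $PK\sslash G$ rather than a fibrewise path space. I would handle this with Lemma~\ref{lem:propsHQ}\,(iv): $PK\sslash G\to\B G$ is a fibration with contractible fibre, and its fibre over $L$ is $K(\pi,n+1)$. That is exactly the property required of $\overline{P}L$, and the paper defines $\overline{P}L$ as this homotopy quotient anyway.

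Finally, we record that $\kappa_n\sslash G$ determines, via Proposition~\ref{prop:twisted_representability}, a twisted cohomology class $k_n\in H^{n+2}(P[n]\sslash G,\B G;\mathcal{L}^{n+2})$. This class is the twisted $k$-invariant used later in the paper.
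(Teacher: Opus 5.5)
Your proposal is correct and follows the paper's route: the paper's proof only invokes Lemma~\ref{lem:propsHQ}\,(iii), that the homotopy quotient $\mathcal{K}^*_G\to\mathcal{K}^*_{/\B G}$ is a right adjoint and so preserves the pullbacks defining $P[n+1]$, and Lemma~\ref{lem:propsHQ}\,(iv), that it preserves fibrations and $n$-equivalences, and you unpack this condition by condition. One small slip: pullbacks in $\mathcal{K}^*_{/\B G}$ are ordinary pullbacks of the underlying spaces, taken over the cospan vertex (it is \emph{products} in the slice that are fibre products over $\B G$), but this does not affect the argument.
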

\begin{proof}
 This follows immediately on applying Lemma \ref{lem:propsHQ}\,(iii) and (iv).  
\end{proof} 

\subsection{Postnikov tower for the twisted Thom space}

We now apply the theory of the previous Subsection~\ref{subsec:general_postnikov} to describe the beginning of a Postnikov tower for $\twThom(n)$.
For our purposes, it is convenient to identify
\[
    \twThom(n) = \MSO(n) \sslash \ZZ_2
\]
as explained in Remark~\ref{rmk:model_bo_homotopy_orbits}.

The first non-trivial homotopy group of $\MSO(n)$ is $\pi_n \coloneq \pi_n(\MSO(n))\cong \ZZ$, as follows easily from the Thom Isomorphism and Hurewicz theorems.
The $\ZZ_2$-action on $\pi_n$ induced by the action on $\MSO(n)$ described in Remark~\ref{rmk:model_bo_homotopy_orbits} is non-trivial: this follows from naturality of the Hurewicz map and an examination of the monodromy action of the relative fibration
\[
\begin{tikzcd}[sep=small]
    (D(\tilde{\gamma}_n),S(\tilde{\gamma}_n)) \arrow[r] & (D(\gamma_n),S(\gamma_n)) \arrow[r] & B\ZZ_2.
\end{tikzcd}
\]
Combined with the well known facts that $\MSO(1)\simeq K(\ZZ,1)$ and $\MSO(2)\simeq K(\ZZ,2)$, this gives the following in low dimensions:
\begin{lemma}\label{lem:k12}
   For $n=1,2$, the Thom space $\MSO(n)$ is a $\ZZ_2$-Eilenberg--MacLane space $K(\ZZ,n)$, where $\ZZ_2$ acts on $\ZZ$ by negation.
   Consequently, in these dimensions $\twThom(n)$ is a generalised Eilenberg--MacLane space $L(\ZZ,n)$.  
\end{lemma}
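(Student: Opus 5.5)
We must show two things: $\MSO(n)$ for $n=1,2$ is a $\ZZ_2$-Eilenberg--MacLane space of type $(\ZZ,n)$ with the sign action, and consequently $\twThom(n)=\MSO(n)\sslash\ZZ_2$ is $L(\ZZ,n)$.

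\textbf{Step 1: the underlying homotopy type.} For $n=1$, the oriented line bundle $\tilde\gamma_1$ over $\BSO(1)\simeq\ast$ is trivial, so $\MSO(1)\simeq S^1\simeq K(\ZZ,1)$. For $n=2$, $\BSO(2)\simeq\mathbb{C}P^\infty$ and $\tilde\gamma_2$ is the tautological complex line bundle. Hence $\MSO(2)$ is the Thom space of the tautological bundle, which is $\mathbb{C}P^{\infty}/\ast\simeq\mathbb{C}P^\infty\simeq K(\ZZ,2)$; this follows from the standard identification $\mathrm{Th}(\gamma^1\to\mathbb{C}P^{m})\cong\mathbb{C}P^{m+1}$, passed to the colimit. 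This step only checks the non-equivariant homotopy groups, and since weak equivalences in $\mathcal{K}^*_{\ZZ_2}$ are detected on underlying spaces, that is all we need here.

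\textbf{Step 2: the $\ZZ_2$-module structure.} The paragraph preceding the lemma already shows that the orientation-reversing action on $\pi_n(\MSO(n))\cong\ZZ$ is non-trivial, hence it is negation, as this is the only non-trivial automorphism of $\ZZ$. We must also ensure $\MSO(n)$ is a connected based $\ZZ_2$-CW complex, or weakly equivalent in $\mathcal{K}^*_{\ZZ_2}$ to one. The basepoint, the image of the sphere bundle, is $\ZZ_2$-fixed. If one insists on the CW condition, a $\ZZ_2$-CW replacement (cofibrant approximation) does not change underlying homotopy groups or their module structure. Hence $\MSO(n)$ is a $\ZZ_2$-Eilenberg--MacLane space $K(\ZZ,n)$ with the sign action.

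\textbf{Step 3: passing to the homotopy quotient.} We use Remark~\ref{rmk:model_bo_homotopy_orbits} to identify $\twThom(n)=\MSO(n)\sslash\ZZ_2$, and we use that $L(\ZZ,n)$ is by definition $K(\ZZ,n)\sslash\ZZ_2$ for a model of $K(\ZZ,n)$ carrying the sign action. The main obstacle is that $L(\ZZ,n)$ is defined using Gitler's particular simplicial model, so the two models must be compared. We would show that any two $\ZZ_2$-Eilenberg--MacLane spaces of the same type are connected by a zigzag of based $\ZZ_2$-maps that are underlying weak equivalences. For this, realise both as Postnikov truncations of a common $\ZZ_2$-CW complex using~\cite[Example V.4.1]{May96}, or construct a $\ZZ_2$-map from a $\ZZ_2$-CW model inducing the identity on $\pi_n$ by equivariant obstruction theory, the obstructions vanishing since the target has a single homotopy group. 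Lemma~\ref{lem:propsHQ}\,(iv) then says $(\placeholder)\sslash\ZZ_2$ preserves weak equivalences, and these are section-preserving over $\B\ZZ_2$. Therefore $\twThom(n)\simeq L(\ZZ,n)$ as retractive spaces over $\BO(1)$.
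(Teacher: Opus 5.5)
Your proof is correct and is essentially the paper's argument. The paper likewise combines the non-triviality of the orientation-reversing action on $\pi_n(\MSO(n))\cong\ZZ$ with the standard equivalences $\MSO(1)\simeq K(\ZZ,1)$ and $\MSO(2)\simeq K(\ZZ,2)$, and gets the second claim by applying $(\placeholder)\sslash\ZZ_2$; it leaves implicit the comparison of models that you carry out in Step~3.

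One justification in Step~3 needs tightening: over a $\ZZ_2$-CW source with non-free cells you cannot always prescribe the map on $\pi_n$, even when the target has a single homotopy group. For example, $S^1$ with complex conjugation admits no $\ZZ_2$-map of degree $\pm1$ to $\MSO(1)$, whose fixed set is the basepoint. So take the source to be free away from the basepoint, e.g.\ $\E\ZZ_2{}_+\wedge(\placeholder)$ applied to either model; your obstruction argument then goes through.
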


From now on we assume $n\ge3$.
Recall that Thom has shown in ~\cite[\S II.8, \S II.9]{Thom54} that the next non-trivial homotopy group of $\MSO(n)$ after $\pi_n\cong \ZZ$ is $\pi_{n+4} \coloneq \pi_{n+4}(\MSO(n))\cong \ZZ$, and furthermore that the beginning of the Postnikov tower for $\MSO(n)$ looks as follows:
\begin{equation}\label{eq:GPtower}
    \begin{tikzcd}
       && {\vdots} & \\
    	&& {P[n+4]} & \\
    	 {\MSO(n)} && {K(\pi_n,n)} & {K(\pi_{n+4},n+5)} \mathrlap{.}
        \arrow["{\St^5_3}", from=3-3, to=3-4]
     	\arrow["{p_{n+4}}",from=2-3, to=3-3]
    	\arrow["{q_{n+4}}" description, from=3-1, to=2-3, bend left=20]
    	\arrow["{q_n}", from=3-1, to=3-3]
    	  \arrow[from=1-3, to=2-3]
    \end{tikzcd}
\end{equation}
Here the $(n+1)$-equivalence $q_n$ may be chosen to represent either of the Thom class generators of  $H^n(\MSO(n);\pi_n)\cong \ZZ$. The first $\kappa$-invariant represents the class
\[
    \St^5_3(\iota)\in H^{n+5}(K(\ZZ,n);\ZZ)\cong H^{n+5}(K(\pi_n,n);\pi_{n+4}),
\]
where $\St^5_3=\beta \, P^1_3 \, \rho_3$ is the Steenrod cube (see Subsection~\ref{subsec:classical_computations}).
Since $\MSO(n)$ is a based $\ZZ_2$-simple space (as $\MSO(n)$ and $\MSO(n)^{\ZZ_2}=\{\ast\}$ are simply connected), it admits a Postnikov tower in $\mathcal{K}^*_{\ZZ_2}$ in the sense of Subsection~\ref{subsec:general_postnikov}.
This will have the exact same form as the tower~\eqref{eq:GPtower}, but now all the maps are based $\ZZ_2$-maps and the Eilenberg--MacLane spaces are $\ZZ_2$-Eilenberg--MacLane spaces for the $\ZZ_2$-modules $\pi_n$ and $\pi_{n+4}$.
In particular, by the uniqueness of the first $\kappa$-invariant, we find an equivariant representative in the homotopy class of
\[
    \St^5_3 \colon K(\pi_n,n)\to K(\pi_{n+4},n+5).
\]
Note that by the fact that $\St^5_3(-\iota)=-\St^5_3(\iota)$, this implies that the $\ZZ_2$-action on $\pi_{n+4}$ is also the non-trivial action.

Now taking homotopy quotients in diagram~\eqref{eq:GPtower} and applying Proposition~\ref{prop:HQPostnikov}, we obtain a Postnikov tower for $\twThom(n)$ in $\mathcal{K}^*_{/\B\ZZ_2}$:
\begin{equation}\label{eq:Ptower}
\begin{tikzcd}
  && {\vdots} & \\
  && {P[n+4]\sslash\ZZ_2} & \\
  \twThom(n) && {L(\ZZ,n)} & {L(\ZZ,n+5)} \mathrlap{.}
  \arrow["{\St^5_3 \sslash \ZZ_2}", from=3-3, to=3-4]
  \arrow["{p_{n+4}\sslash\ZZ_2}", from=2-3, to=3-3]
  \arrow["{q_{n+4}\sslash\ZZ_2}" description, from=3-1, to=2-3, bend left=20]
  \arrow["{q_n\sslash\ZZ_2}", from=3-1, to=3-3]
  \arrow[from=1-3, to=2-3]
\end{tikzcd}
\end{equation}

\begin{lemma}\label{lem:postnikov_tower}
    The following two statements hold:
    \begin{enumerate}[label=\upshape{(\roman*)}]
        \item The map $q_n\sslash\ZZ_2 \colon \twThom(n) \to L(\ZZ,n)$ may be chosen to represent the twisted Thom class
        \[
            u_n^\tw\in H^n(\twThom(n),\B\ZZ_2; \mathcal{L}_\ZZ^\mathrm{Th}). 
        \]
        \item The map
        \[
            \St^5_{3, \tw} \coloneq \St^5_3\sslash\ZZ_2 \colon L(\ZZ,n)\to L(\ZZ,n+5)
        \]
        represents the twisted cohomology operation 
        \[
            \St^5_{3, \tw} =\beta^\mathrm{tw}\circ \mathcal{P}^1_3\circ\rho_3^{\mathrm{tw}}.
        \]
        Here $\mathcal{P}^1_3$ is Gitler's twisted cohomology operation lifting the Steenrod reduced power $P^1_3:H^*(\placeholder;\ZZ_3)\to H^{*+4}(\placeholder;\ZZ_3)$, and ${\rho_3}^\mathrm{tw}$ and $\beta^{\mathrm{tw}}$ are respectively the twisted reduction mod $3$ and the twisted Bockstein associated to the exact coefficient sequence
        \[
        \begin{tikzcd}[sep=small]
           0 \arrow[r] & \AA \arrow[r,"\times 3"] & \AA \arrow[r,"{\rho_3^{\mathrm{tw}}}"] & \AA_3\arrow[r] & 0.
        \end{tikzcd}
        \]
        \end{enumerate}
\end{lemma}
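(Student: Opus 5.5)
For part (i), I will use Proposition~\ref{prop:twisted_representability}. Parametrised maps $\twThom(n)\to L(\ZZ,n)$ over $\B\ZZ_2$ that preserve sections correspond bijectively to classes in $H^n(\twThom(n),\B\ZZ_2;\mathcal{L}_\ZZ^{\mathrm{Th}})$, via pulling back $e_n$. So it suffices to show that $(q_n\sslash\ZZ_2)^*e_n=\pm u_n^\tw$; if the sign is wrong, I will compose $q_n$ with the equivariant negation on $K(\pi_n,n)$. To compare the two classes, I first compute the group $H^n(\twThom(n),\B\ZZ_2;\mathcal{L}_\ZZ^{\mathrm{Th}})$. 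By the excision isomorphism $r^*$ of Subsection~\ref{subsec:twisted_thom_space_class} and the twisted Thom isomorphism \eqref{eq:twisted_thom_iso_I}, this group is $H^0(\BO(n);\ZZ)\cong\ZZ$, and $u_n^\tw$ is a generator.

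It therefore remains to show that $(q_n\sslash\ZZ_2)^*e_n$ is also a generator. I will do this by restricting to a fibre. The inclusion of the fibre $(\MSO(n),\ast)\hookrightarrow(\twThom(n),\B\ZZ_2)$ induces a map on $H^n$, and along it $\mathcal{L}_\ZZ^{\mathrm{Th}}$ restricts to the trivial system. The class $e_n$ restricts to the fundamental class $\iota_n\in H^n(K(\ZZ,n);\ZZ)$. By construction, $q_n$ pulls $\iota_n$ back to a Thom class of $\MSO(n)$, which is a generator.

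The remaining point is that fibre restriction $\ZZ\to\ZZ$ is an isomorphism. I expect this from the Serre spectral sequence with local coefficients for the relative fibration over $\B\ZZ_2$. Since $\MSO(n)$ is $(n-1)$-connected relative to the basepoint, the only contribution in total degree $n$ is $E_2^{0,n}=H^0(\B\ZZ_2;\mathcal{H}^n)$. The coefficient system $\mathcal{H}^n$ is $H^n(\MSO(n))\otimes\mathcal{L}_\ZZ$. The monodromy acts by $-1$ on each factor, hence trivially on the tensor product, so this group is $\ZZ$ and the edge map is fibre restriction. Equivalently, I can pull back along the double cover as in the remark following the definition of $u_n^\tw$: both twisted classes become oriented Thom classes of $\MSO(n)$ up to sign.

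For part (ii), I start from Gitler's representability. The twisted operations $\rho_3^{\mathrm{tw}}$, $\mathcal{P}^1_3$ and $\beta^{\mathrm{tw}}$ are natural transformations of twisted cohomology over $\B\ZZ_2$, so their composite is represented by a parametrised map $L(\ZZ,n)\to L(\ZZ,n+5)$. By Proposition~\ref{prop:twisted_representability}, two such maps are parametrised homotopic exactly when they pull back $e_{n+5}$ to the same class in $H^{n+5}(L(\ZZ,n),\B\ZZ_2;\mathcal{L}^n_\ZZ)$.

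I then compare $(\St^5_3\sslash\ZZ_2)^*e_{n+5}$ with $\St^5_{3,\tw}(e_n)$ by the same fibre-restriction strategy. On the fibre $K(\ZZ,n)$, the twisted operations restrict to the untwisted ones; this is the defining property of Gitler's lifts, and is clear for the reduction and the Bockstein by naturality of coefficient sequences. Hence both classes restrict to $\St^5_3(\iota_n)$.

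The main obstacle is injectivity of fibre restriction in degree $n+5$. Here the spectral sequence has potentially many terms $E_2^{s,n+5-s}=H^s(\B\ZZ_2;\mathcal{H}^{n+5-s}(K(\ZZ,n)))$. The fibre cohomology in degrees between $n$ and $n+5$ is mostly $2$- and $3$-torsion, and $\B\ZZ_2$ has $2$-torsion cohomology in positive degrees, so these terms need not vanish. I would first try to sidestep this by working $3$-locally. The difference of the two classes is $3$-torsion, since $\beta$ has order $3$. After inverting $2$, the transfer for $\ZZ_2$ shows that $H^*(L(\ZZ,n),\B\ZZ_2;\mathcal{L})\otimes\ZZ_{(3)}$ is the $\ZZ_2$-invariants of $H^*(K(\ZZ,n);\ZZ_{(3)})$ with the twisted action. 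Consequently fibre restriction is injective on the $3$-primary part.

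If this does not go through cleanly, I would instead argue through uniqueness of the $\kappa$-invariant. The equivariant $\kappa$-invariant is determined in $H^{n+5}_{\ZZ_2}$ by Bredon/Borel uniqueness. I would then identify Gitler's operation as the homotopy quotient of an equivariant representative of $\St^5_3$, using that $\rho_3$, $P^1_3$ and $\beta$ all admit equivariant models on $K(\ZZ,n)$ compatible with negation.
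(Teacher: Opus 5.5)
Part (i) is correct and is essentially the paper's argument. The paper restricts $(q_n\sslash\ZZ_2)^*e_n$ to a single disc fibre $(D(\tilde\gamma_n)_x,S(\tilde\gamma_n)_x)$ and uses the defining property of a twisted Thom class. You instead compute $H^n(\twThom(n),\B\ZZ_2;\mathcal{L}_\ZZ^{\mathrm{Th}})\cong\ZZ$ and check that restriction to $\MSO(n)$ is an isomorphism.

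In (ii) there is a genuine gap at the sentence ``the difference of the two classes is $3$-torsion, since $\beta$ has order $3$''. This holds for $\St^5_{3,\tw}(e_n)$, which lies in the image of $\beta^{\tw}$. It does not follow for $(\St^5_3\sslash\ZZ_2)^*e_{n+5}$. There $\St^5_3$ is the \emph{equivariant} $\kappa$-invariant of $\MSO(n)$, and all you know about it is its underlying nonequivariant class.

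That class does not determine the twisted class, because fibre restriction is not injective in degree $n+5$. For $n\ge5$, the term $E_2^{2,n+3}=H^2(\B\ZZ_2;H^{n+3}(K(\ZZ,n);\ZZ))\cong\ZZ_2$ survives to $E_\infty$. So a priori $(\St^5_3\sslash\ZZ_2)^*e_{n+5}=\St^5_{3,\tw}(e_n)+c$, with $c$ a $2$-primary class vanishing on the fibre. Your $3$-local transfer argument is correct, but it cannot detect $c$.

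Excluding $c$ is not a formal property of the operations. It needs input about $\twThom(n)$, namely that the twisted $\kappa$-invariant $k$ has order $3$. One way to get this:
\begin{itemize}
\item Since $q_n\sslash\ZZ_2$ is fibrewise an $(n+4)$-equivalence, $H^{n+5}(L(\ZZ,n),\twThom(n);\mathcal{L}^n_\ZZ)\cong\ZZ$, and $k$ is the image of a generator $\gamma$. Hence $3k=0$ as soon as $3\gamma=\delta(y)$ for some $y\in H^{n+4}(\twThom(n),\B\ZZ_2;\mathcal{L}_\ZZ^{\mathrm{Th}})$.
\item Take $y$ to correspond to $p_1(\gamma_n)\in H^4(\BO(n);\ZZ)$ under the twisted Thom isomorphism. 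It restricts to $p_1U$ on the fibre.
\item In the untwisted sequence, $\delta(p_1U)$ is $\pm3$ times the generator. Indeed, it is $p_1U$ evaluated on the Hurewicz image of a generator of $\pi_{n+4}(\MSO(n))$, i.e.\ $\pm p_1[\mathbb{C}P^2]$ in the stable range.
\item Fibre restriction is an isomorphism on this $\ZZ$, so $\delta(y)=\pm3\gamma$.
\end{itemize}
With $3k=0$, your $3$-local injectivity argument finishes (ii).

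Your fallback is the paper's route. The paper takes $\ZZ_2$-equivariant representatives of $\rho_3$, $P^1_3$, $\beta$ (using $\theta(-x)=-\theta(x)$), passes to homotopy orbits as in Gitler's proof of his Theorem~8.5, and applies functoriality. It implicitly takes the equivariant representative of $\St^5_3$ to be their composite. This tacitly uses the same fact. Uniqueness of the $\kappa$-invariant pins down a class in $H^{n+5}(L(\ZZ,n),\B\ZZ_2;\mathcal{L}^n_\ZZ)$. An equivariant map with the correct underlying homotopy class need not represent that class, for exactly the reason above. So in either route, the order-$3$ computation is the step you still need to supply.
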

\begin{proof}
\leavevmode
    \begin{enumerate}[label=\upshape{(\roman*)}]
        \item We have a commuting diagram
        \[
        \begin{tikzcd}[sep=1.5em, nodes={font=\small}, labels={font=\scriptsize}]
            (D(\tilde\gamma_n)_x,S(\tilde\gamma_n)_x) \arrow[r, hook] \arrow[d,"\jmath"'] &  (D(\tilde\gamma_n),S(\tilde\gamma_n)) \arrow[r,"q_n"] \arrow[d] & (K(\ZZ,n),*) \arrow[d] \\
             (D(\tilde\gamma_n\sslash\ZZ_2)_{[x,e]},S(\tilde\gamma_n\sslash\ZZ_2)_{[x,e]}) \arrow[d] \arrow[r, hook] & (D(\tilde\gamma_n\sslash\ZZ_2),S(\tilde\gamma_n\sslash\ZZ_2)) \arrow[r,"q_n\sslash \ZZ_2"] \arrow[d] & (L(\ZZ,n),\B\ZZ_2) \arrow[d]\\
             \{[e]\} \arrow[r, hook] & \B\ZZ_2 \arrow[r,equal] & \B\ZZ_2
        \end{tikzcd}
        \]
        in which $x\in \BSO(n)$ and $e\in \E\ZZ_2$ are arbitrary.
        Since $q_n$ is a $(n+1)$-equivalence, it represents a Thom class in $\widetilde{H}^n(\MSO(n);\ZZ)\cong H^n(D(\tilde\gamma_n),S(\tilde\gamma_n);\ZZ)$.
        The twisted fundamental class $e_n\in H^n(L(\ZZ,n),\B\ZZ_2;\mathcal{L}_\ZZ^n)$ restricts to the usual fundamental class $\iota_n\in \widetilde{H}^n(K(\ZZ,n);\ZZ)$, and so continuing to pull it back to the top left corner gives a generator of $H^n(D(\tilde\gamma_n)_x,S(\tilde\gamma_n)_x;\ZZ)$.
        On the other hand, the map $\jmath$ induces an isomorphism \[
            H^n \bigl(D(\tilde\gamma_n\sslash\ZZ_2)_{[x,e]},S(\tilde\gamma_n\sslash\ZZ_2)_{[x,e]};\ZZ \bigr)\cong H^n \bigl(D(\tilde\gamma_n)_x,S(\tilde\gamma_n)_x;\ZZ \bigr),
        \]
        so that pulling back $e_n$ along the composition in the middle row gives a generator also.
        \item Each of the untwisted operations \(\theta \in \{\rho_3, P^1_3, \beta\}\) satisfies $\theta(-x)=-\theta(x)$ whereby there are $\ZZ_2$-equivariant representatives between $\ZZ_2$-Eilenberg--MacLane spaces
        \begin{align*}
            \rho_3 &\colon K(\ZZ,n)\to K(\ZZ_3,n), \\
            P^1_3 &\colon K(\ZZ_3,n)\to K(\ZZ_3,n+4), \text{ and }\\
            \beta &\colon K(\ZZ_3,n+4)\to K(\ZZ,n+5).
        \end{align*}
        Now applying homotopy orbits we get representatives of the twisted operations
        \begin{align*}
            \rho_3^{\mathrm{tw}} &\colon L(\ZZ,n)\to L(\ZZ_3,n), \\
            \mathcal{P}^1_3 &\colon L(\ZZ_3,n)\to L(\ZZ_3,n+4), \text{ and } \\
            {\beta}^{\mathrm{tw}} &\colon L(\ZZ_3,n+4)\to L(\ZZ,n+5),
        \end{align*}
        compare the proof of Theorem 8.5 in \cite[pp.184--185]{Gitler63}. The functoriality of homotopy orbits now implies the result.
    \end{enumerate}
This completes the proof. 
\end{proof}

\begin{theorem}\label{thm:twistedcube}
    Assume that $X$ is a smooth manifold and let $\AA$ be the local integer coefficient system induced by some map $p \colon X \to \B\ZZ_2$.
    Let $\alpha \in H^n(X;\AA)$ be a twisted cohomology class.
    \begin{enumerate}[label=\upshape{(\roman*)}]
        \item If $n=1,2$ then $\alpha$ is realisable.
        \item If $n \ge 3$ then a necessary condition for $\alpha$ to be realisable is that $\St^5_{3, \tw} (\alpha)\in H^{n+5}(X;\AA)$ is zero.
        If $\dim(X)\le n+5$, this condition is also sufficient. 
    \end{enumerate}
\end{theorem}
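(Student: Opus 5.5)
By Theorem~\ref{thm:representability}, $\alpha$ is realisable if and only if the class in $[X, L(\ZZ,n)]^*_{\B\ZZ_2}$ corresponding to $\alpha$ lifts through $q_n\sslash\ZZ_2\colon \twThom(n)\to L(\ZZ,n)$ by a map over $\B\ZZ_2 \simeq \RR P^\infty$. This uses Proposition~\ref{prop:twisted_representability} with $C=\emptyset$, where maps into a retractive space automatically land in the space, and Lemma~\ref{lem:postnikov_tower}(i), which says that $(q_n\sslash\ZZ_2)^*e_n=u_n^\tw$. So the plan is to translate realisability into a parametrised lifting problem and then read the obstructions off the Postnikov tower~\eqref{eq:Ptower}.

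For part (i), Lemma~\ref{lem:k12} gives that $q_n\sslash\ZZ_2$ is a weak equivalence of fibrant retractive spaces when $n=1,2$. Hence Proposition~\ref{prop:n-equiv}, applied with $n$ arbitrarily large (a CW structure on $X$ compatible with $p$ is available since $X$ is a manifold), makes the induced map on $[X,-]^*_{\B\ZZ_2}$ a bijection. In particular it is surjective, so every $\alpha$ is hit, and Theorem~\ref{thm:representability} shows $\alpha$ is realisable.

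For part (ii), necessity goes as follows. Suppose $\alpha=f^*u_n^\tw$ for some $f$ over $\RR P^\infty$. Then the classifying map of $\alpha$ is parametrised homotopic to $(q_n\sslash\ZZ_2)\circ f = (p_{n+4}\sslash\ZZ_2)\circ(q_{n+4}\sslash\ZZ_2)\circ f$. Since $p_{n+4}\sslash\ZZ_2$ is the homotopy fibre of $\St^5_3\sslash\ZZ_2$, which is a pulled-back path fibration, the composite $\St^5_{3,\tw}\circ(\text{classifying map of }\alpha)$ is parametrised null-homotopic. By naturality of the representation in Proposition~\ref{prop:twisted_representability}, together with Lemma~\ref{lem:postnikov_tower}(ii), this says $\St^5_{3,\tw}(\alpha)=0$ in $H^{n+5}(X;\AA)$.

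For sufficiency when $\dim X\le n+5$, we have $\St^5_{3,\tw}(\alpha)=0$, so the classifying map of $\alpha$ lifts to $P[n+4]\sslash\ZZ_2$ over $\B\ZZ_2$. To see this, use the homotopy lifting property of the fibration $\overline{P}L(\ZZ,n+5)\to L(\ZZ,n+5)$ along the null-homotopy over $\B\ZZ_2$: a section-preserving null-homotopy to $\sigma$ gives a lift into the pullback. Next, $q_{n+4}\sslash\ZZ_2$ is an $(n+5)$-equivalence by Lemma~\ref{lem:propsHQ}(iv), and $\dim X\le n+5$, so Proposition~\ref{prop:n-equiv} makes $[X,\twThom(n)]^*_{\B\ZZ_2}\to[X,P[n+4]\sslash\ZZ_2]^*_{\B\ZZ_2}$ surjective, and the lift comes from a map $f$. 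Composing, $f^*u_n^\tw=\alpha$.

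The main obstacle is bookkeeping rather than substance. First, one must identify the parametrised null-homotopy class with the vanishing of a twisted cohomology class, and check that the pulled-back coefficient system is $\AA$. Second, the relevant sets are $[X,-]^*_{\B\ZZ_2}$ with $A=\emptyset$, while the paper's Theorem~\ref{thm:representability} is phrased over $\RR P^\infty$ with unbased homotopy; these agree once we use the models $\B\ZZ_2=\RR P^\infty$.
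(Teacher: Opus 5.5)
Your proposal is correct and follows the paper's proof essentially step for step. You reduce, via Theorem~\ref{thm:representability} and Lemma~\ref{lem:postnikov_tower}(i), to lifting the classifying map of $\alpha$ through $q_n\sslash\ZZ_2$. You invoke Lemma~\ref{lem:k12} for $n=1,2$. For necessity you use that $\St^5_{3,\tw}\circ(p_{n+4}\sslash\ZZ_2)$ is retractively null-homotopic, and for sufficiency you lift to $P[n+4]\sslash\ZZ_2$ and then apply Proposition~\ref{prop:n-equiv}. The only difference is cosmetic: the paper passes to $X_+=X\sqcup\B\ZZ_2$ with $A=\B\ZZ_2$, whereas you take $A=\emptyset$, and both give the same sets since $[(X,\emptyset),Y]^*_{\B\ZZ_2}=[X,Y]_{\B\ZZ_2}$.
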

\begin{proof}
    Note that $(X,p)$ is a parametrised space over $\B\ZZ_2$.
    Since we want to work with retractive spaces, we add a disjoint parametrised base point by defining $X_+ \coloneq X \sqcup B\ZZ_2$ and extend $p$ to $X_+$ by the identity map on $\B\ZZ_2$.
    Then for any $Y \in \mathcal{K}^*_{/\B\ZZ_2}$ we have
    \[
        [(X_+,\B\ZZ_2),Y]^*_{\B\ZZ_2} \cong [X,Y]_{\B\ZZ_2}.
    \]
    In particular, $\alpha\in H^n(X;\AA)\cong H^n(X_+,\B\ZZ_2;\AA)$ is realisable if and only if there is a retractive map $f \colon X_+\to \twThom(n)$ such that $f^*(u_n^\tw)=\alpha$, where $u_n^\tw\in H^n(M,\B\ZZ_2;\mathcal{L}_\ZZ^\mathrm{Th})$ is the twisted Thom class.
    \begin{enumerate}[(i)]
        \item This follows from Lemma~\ref{lem:k12}.
        \item Let $\alpha \colon X_+\to L(\ZZ,n)$ be a parametrised map representing $\alpha$.
            The composition ${\St^5_{3, \tw} \circ (q_n\sslash\ZZ_2)}$ is retractively null-homotopic, so if $\alpha$ lifts through the Thom class
            \[
                u_n^\tw\simeq_{\B\ZZ_2} q_n\sslash\ZZ_2 \colon \twThom(n) \to L(\ZZ,n)
            \]
            then $\St^5_{3, \tw} \circ\alpha$ is null-homotopic, so $\St^5_{3, \tw} (\alpha)\in H^{n+5}(X;\AA)$ is zero.
            Conversely, if ${\St^5_{3, \tw} (\alpha)\in H^{n+5}(X;\AA)}$ is zero then $\alpha$ lifts through
            \[
                p_{n+4}\sslash\ZZ_2 \colon P[k+4]\sslash\ZZ_2 \to L(\ZZ,n).
            \]
            The retractive map $q_{n+4}\sslash\ZZ_2$ between fibrant retractive spaces induces isomorphisms on homotopy groups up to degree $n+4$ and an epimorphism in degree $n+5$.
            Thus if $X$ has dimension at most $n+5$ then by Proposition~\ref{prop:n-equiv}, the map $\alpha$ lifts through 
            \[
                (q_{n+4}\sslash\ZZ_2) \circ (p_{n+4}\sslash\ZZ_2) = q_n\sslash\ZZ_2 \simeq_{\B\ZZ_2} u_n^\tw. 
            \]
    \end{enumerate}
    This completes the proof.
\end{proof}

\begin{remark}
    Using known calculations of the stable groups $\pi_{n+i}(\MSO(n))$ for $i<n$, which agree with the oriented bordism groups $\Omega_i$, this can be pushed a little further.
    For example, just using Thom's calculations (see~\cite[Theorems II.16, II.17]{Thom54}) we can say that if $n\ge8$ and $\dim(X)\le n+8$ then $\alpha\in H^n(X;\AA)$ is realisable if and only if $\St^5_{3, \tw} (\alpha)=0$.
\end{remark}

\section{An application: Realisability in non-orientable manifolds}\label{sec:application}

Thom's results on realisability of integral homology classes assume that the ambient manifold is orientable, so that the Poincar\'e dual cohomology class is integral.
The question of whether a given integral homology class $a\in H_k(X;\ZZ)$ in a non-orientable manifold $X^{n+k}$ can be realised by an oriented submanifold was raised recently by Zhenhua Liu on MathOverflow~\cite{LiuMO}, and can be addressed using the methods in this paper.
In particular, such an $a$ is realisable if and only if the dual cohomology class $\alpha\in H^n(X;\Or_X)$ is an image of the twisted Thom class.
Applying Theorem~\ref{thm:twistedcube} we immediately obtain the following.

\begin{corollary}
    Let $X$ be a non-orientable $(n+k)$-manifold. 
    \begin{enumerate}[label=\upshape{(\roman*)}]
        \item Every homology class $a\in H_{k}(X;\ZZ)$ is realisable by an orientable submanifold if $n=1$ or $n=2$.
        \item If $n\ge 3$ and $k\le 5$ then $a\in H_{k}(X;\ZZ)$ is realisable if and only if the dual cohomology class $\alpha\in H^n(X;\Or_X)$ satisfies $\St^5_{3, \tw} (\alpha)=0$. 
    \end{enumerate}
\end{corollary}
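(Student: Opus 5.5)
The plan is to reduce the corollary to Theorem~\ref{thm:twistedcube} by choosing the coefficient system $\AA=\Or_X$. The only real work is bookkeeping of local systems.

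First I will check that $\Or_X$ is induced from $\mathcal{L}_\ZZ$ by a smooth map. By Lemma~\ref{lemma:prop_loc_sys}\,(i), (ii) and (iv) there are canonical isomorphisms $\Or_X\cong\Or_{TX}\cong\Or_{\det TX}$. The determinant line bundle $\xi=\det TX$ is classified by a map $p\colon X\to\RR P^\infty$; since $X$ is compact, $p$ may be taken smooth into some $\RR P^L$. By Remark~\ref{rmk:universal_orientation_system}, $p^*\mathcal{L}_\ZZ\cong p^*\Or_{\gamma_1}\cong\Or_{\xi}\cong\Or_X$. So $\AA=\Or_X$ falls under Theorems~\ref{thm:representability} and~\ref{thm:twistedcube}. (Non-orientability is not actually used; it only explains why a twist is needed.)

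Next I will identify the two notions of realisability. By Proposition~\ref{prop:local_systems}\,(ii), $\AA\otimes\Or_X=\Or_X\otimes\Or_X\cong\underline{\ZZ}$ canonically. Hence twisted Poincar\'e duality $\cap[X]_\tw$ is an isomorphism $H^n(X;\Or_X)\to H_k(X;\ZZ)$, and every $a\in H_k(X;\ZZ)$ equals $\alpha\cap[X]_\tw$ for a unique $\alpha\in H^n(X;\Or_X)$. By the definition in the introduction, $a$ is realisable if there is an embedding $\iota\colon M\hookrightarrow X$ with $\iota^*(\Or_X\otimes\Or_X)\cong\Or_M$ and $\iota_*[M]_\tw=a$. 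The first condition says $\Or_M\cong\underline{\ZZ}$, i.e.\ $M$ is orientable. Under this choice of orientation $[M]_\tw$ becomes an ordinary fundamental class, so $\iota_*[M]=a$. Thus ``$a$ is realisable by an orientable submanifold'' is exactly ``$\alpha$ is realisable'' in the paper's sense.

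The main thing to verify is that these coefficient identifications are compatible with the isomorphism $\iota^*(\Or_X\otimes\Or_\xi)\cong\Or_M$ used to define $\mu$ in Subsection~\ref{subsec:realisability}. There $\Or_\xi\cong\Or_X$ via $\xi=\det TX$, and the framing $\varphi\colon\bwedge^n\nu M\to\iota^*\det TX$ corresponds to an orientation of $M$ via $TM\oplus\nu M\cong\iota^*TX$. Because all isomorphisms in Lemma~\ref{lemma:prop_loc_sys} are canonical, this compatibility should only require tracking a sign. Moreover, a sign change on $\varphi$ just reverses the orientation of $M$, so the realisable set is unaffected.

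Finally I will apply Theorem~\ref{thm:twistedcube} with $\dim X=n+k$. Part~(i) follows directly from Theorem~\ref{thm:twistedcube}\,(i). For part~(ii), when $n\ge3$ the condition $k\le5$ is the same as $\dim X\le n+5$. Theorem~\ref{thm:twistedcube}\,(ii) then says $\alpha$ is realisable if and only if $\St^5_{3,\tw}(\alpha)=0$ in $H^{n+5}(X;\Or_X)$, which translates back to the stated criterion for $a$.
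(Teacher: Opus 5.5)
Your proposal is correct and is essentially the paper's argument. The paper obtains the corollary immediately by specialising Theorem~\ref{thm:twistedcube} to $\AA=\Or_X$, using $\Or_X\otimes\Or_X\cong\underline{\ZZ}$ to identify realisability of $\alpha$ with realisability of $a=\alpha\cap[X]_\tw$ by an orientable submanifold, and noting that $k\le 5$ is the same as $\dim X\le n+5$. Your extra bookkeeping fills in details the paper leaves implicit: that $\Or_X$ is pulled back from $\mathcal{L}_\ZZ$ along the classifying map of $\det TX$, and that changing the sign of the framing only reverses orientations componentwise.
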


We now give examples of non-realisable twisted integral cohomology classes in non-orientable manifolds.
Our method is to find non-realisable integral cohomology classes in orientable manifolds $\widetilde{X}$ which are anti-invariant under some orientation-reversing free involution, since such classes often come from twisted integral cohomology classes on the quotient $X$, and we may apply the following lemma.

\begin{lemma}\label{lem:liftrep}
    Let $X$ be an $(n+k)$-dimensional manifold and $\AA$ be the coefficient system induced by some map $p \colon X \to \B\ZZ_2$.
    Suppose $\alpha\in H^n(X;\AA)$ is realisable, and let $\rho\colon\widetilde{X}\to X$ be the double cover corresponding to the map $p \colon X \to \B\ZZ_2$.
    Then $\rho^*(\alpha)\in H^n(\widetilde{X};\ZZ)$ is realisable.
\end{lemma}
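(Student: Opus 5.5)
The plan is to argue homotopy-theoretically through Theorem~\ref{thm:representability}: a realising map for $\alpha$ should lift along the double cover to a realising map for $\rho^*(\alpha)$. By Theorem~\ref{thm:representability}, realisability of $\alpha$ gives a parametrised map $f\colon X\to\twThom(n)$ over $\B\ZZ_2$ with $f^*(u_n^\tw)=\alpha$. Using the model $\twThom(n)=\MSO(n)\sslash\ZZ_2=\E\ZZ_2\times_{\ZZ_2}\MSO(n)$ from Remark~\ref{rmk:model_bo_homotopy_orbits}, the pullback of the principal $\ZZ_2$-bundle $\E\ZZ_2\to\B\ZZ_2$ along $q$ is $\E\ZZ_2\times\MSO(n)$. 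The double cover $\widetilde X$ is the pullback of $\E\ZZ_2$ along $p=q\circ f$. Hence $f$ lifts to a $\ZZ_2$-equivariant map $\tilde f\colon\widetilde X\to\E\ZZ_2\times\MSO(n)$; this is exactly the adjunction of Lemma~\ref{lem:propsHQ}\,(ii), with $A(X)=\widetilde X$. Composing with the projection gives an equivariant map $\bar f=\pr_2\circ\tilde f\colon\widetilde X\to\MSO(n)$.

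The next step is to check that $\bar f^*(u)=\rho^*(\alpha)$ for a suitable choice of orientation Thom class $u\in\widetilde H^n(\MSO(n);\ZZ)$. The local system $\rho^*\AA=\rho^*p^*\mathcal{L}_\ZZ$ is canonically trivialised, because the pullback of $\mathcal L_\ZZ=\E\ZZ_2\times_{\ZZ_2}\ZZ$ to $\E\ZZ_2$ is trivial. Likewise, the pullback of $u_n^\tw$ along $\E\ZZ_2\times\MSO(n)\to\twThom(n)$ is $\pr_2^*(u)$ for one of the two untwisted Thom classes. This is essentially the computation in the proof of Lemma~\ref{lem:postnikov_tower}\,(i), restricting to a fibre pair $(D(\tilde\gamma_n)_x,S(\tilde\gamma_n)_x)$ and seeing a generator. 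Naturality of twisted cohomology under the morphism $(\widetilde X;\rho^*\AA)\to(X;\AA)$ then gives $\rho^*(\alpha)=\rho^*f^*(u_n^\tw)=\tilde f^*\pr_2^*(u)=\bar f^*(u)$. With the sign fixed (replacing $u$ by $-u$ if needed, since $-u$ is also a Thom class), the classical diagram~\eqref{eq:Thom_rep_diagram} shows that $\rho^*(\alpha)$ is realisable. Here $\widetilde X$ need not be orientable, but Theorem~\ref{thm:representability} applied to the trivial map $\widetilde X\to\mathrm{pt}\to\B\ZZ_2$ handles that: parametrised maps to $\twThom(n)$ over a constant map are maps to the fibre $\MSO(n)$, and the fibre's Thom class is $u$.

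As a sanity check, and an alternative, there is a geometric argument. If $\alpha$ is realised by $(M,\varphi)$ with $\varphi\colon\bwedge^n\nu M\cong\restr{\xi}{M}$, then $\widetilde M=\rho^{-1}(M)$ is a submanifold with $\nu\widetilde M=\rho^*\nu M$. Since $\rho^*\xi$ is canonically trivial, $\rho^*\varphi$ orients $\nu\widetilde M$, so $\widetilde M$ is $\rho^*\xi$-oriented. Its Pontryagin--Thom map is $\bar f$, which recovers the same conclusion, since transfers and pullbacks commute as in Proposition~\ref{prop:pushpull}.

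The main obstacle is bookkeeping of coefficient identifications and signs: one has to confirm that the canonical trivialisation $\rho^*\AA\cong\underline\ZZ$ is compatible with the identification of $u_n^\tw$ upstairs with an honest Thom class. Either sign is acceptable for realisability, since $-u$ is also a Thom class. I would verify this fibrewise using the diagram in the proof of Lemma~\ref{lem:postnikov_tower}\,(i).
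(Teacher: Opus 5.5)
Your proof is correct, but your main route differs from the paper's. Your ``sanity check'' paragraph is essentially the paper's proof.

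The paper argues purely geometrically. If $\iota\colon M\hookrightarrow X$ realises $\alpha$, then $\alpha=\iota^!(1)$ by Lemma~\ref{lem:propUmkehr}. Applying the push-pull formula (Proposition~\ref{prop:pushpull}) to the transverse square formed by $\widetilde M=\rho^{-1}(M)$ gives $\rho^*(\alpha)=\rho^*\iota^!(1)=\widetilde\iota^{\,!}\rho_M^*(1)=\widetilde\iota^{\,!}(1)$, so $\widetilde M$ itself realises $\rho^*(\alpha)$. This uses only the Umkehr formalism and exhibits the realising submanifold explicitly. It needs neither the twisted Pontryagin--Thom theorem nor the model $\twThom(n)=\MSO(n)\sslash\ZZ_2$.

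Your route instead uses:
\begin{itemize}
\item Theorem~\ref{thm:representability}, twice;
\item the homotopy-orbit model of $\twThom(n)$;
\item the fibrewise identification of the pulled-back $u_n^\tw$ with $\pm u$, together with its sign bookkeeping.
\end{itemize}
You handle all of these correctly, including the case of non-orientable $\widetilde X$ via the constant map to $\B\ZZ_2$. In exchange you obtain a $\ZZ_2$-equivariant classifying map $\bar f\colon\widetilde X\to\MSO(n)$, which is slightly more than the lemma asks for.

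The lift to $\E\ZZ_2\times\MSO(n)$ is not actually needed for a homotopy-theoretic proof. The composite $f\circ\rho$ is already a parametrised map over $p\circ\rho$ with $(f\circ\rho)^*(u_n^\tw)=\rho^*(\alpha)$. So Theorem~\ref{thm:representability} applied to $(\widetilde X,p\circ\rho)$ gives realisability in $H^n(\widetilde X;\rho^*\AA)$ at once. It then only remains to transport along either trivialisation $\rho^*\AA\cong\underline{\ZZ}$, and realisability is invariant under isomorphisms of coefficient systems and under sign. That makes your approach a one-liner comparable to the paper's, though it still rests on the heavier Pontryagin--Thom machinery. The paper's push-pull argument avoids that machinery entirely.
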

\begin{proof}
    Let $\iota\colon M^{k}\hookrightarrow X^{n+k}$ be an embedding realising $\alpha$.
    According to the definition of the Umkehr homomorphism and Lemma~\ref{lem:propUmkehr}\,(i), this is equivalent to $\iota^!(1)=\alpha$, where $\iota\in H^0(M;\ZZ)\cong H^0(M;\iota^*\AA\otimes\Or_{\nu\iota})$ is the unit element.
    Now we consider the transverse pullback square
    \[
    \begin{tikzcd}
        \widetilde{M} \arrow[r,"\rho_M"] \arrow[d,"\widetilde{\iota}"', hook] \arrow[rd, phantom, "\lrcorner", very near start] & M  \arrow[d,"\iota", hook] \\
        \widetilde{X} \arrow[r,"\rho"] & X
    \end{tikzcd}
    \]
    and apply Proposition~\ref{prop:pushpull} to the element $1\in H^0(M;\ZZ)$.
    Here, the map $\rho_M \colon \widetilde{M} \to M$ is the double cover of $M$ induced from $\rho$.
    This gives
    \[
        \rho^*(\alpha)=\rho^*\iota^!(1)=\widetilde{\iota}^! \rho_M^*(1)=\widetilde{\iota}^!(1).
    \]
    Thus the embedding $\widetilde{\iota} \colon \widetilde{M}\hookrightarrow\widetilde{X}$ realises $\rho^*(\alpha)$.
\end{proof}

\begin{example}\label{ex:maptoruslens}
    We now give an example of a non-realisable $7$-dimensional integral homology class in a non-orientable $11$-manifold.
    We leverage Bohr--Hanke--Kotschick's example~\cite{BohrHankeKotschick00} of a non-realisable class in a $10$-dimensional product of lens spaces.
    Let $L_1=S^7/\ZZ_3$ and $L_2=S^3/\ZZ_3$ be standard lens spaces.
    For $i=1,2$ let $v_i\in H^1(L_i;\ZZ_3)\cong\ZZ_3$ denote a generator.
    Consider the class $u=\beta(v_1\times v_2)\in H^3(L_1\times L_2;\ZZ)$.
    It is shown in \cite{BohrHankeKotschick00} that
    \[
        0\neq \St^5_3(u)\in H^8(L_1\times L_2;\ZZ),
    \]
    hence $u$ and its Poincar\'e dual homology class in $H_7(L_1\times L_2;\ZZ)$ are non-realisable.
    
    On each $L_i$ there is a free involution induced by the antipodal map on the covering sphere.
    This is orientation-preserving on the sphere (as a product of an even number of reflections) and hence also on $L_i$.
    Now consider the product manifold $\widetilde{X} \coloneq L_1\times L_2\times S^1$, with the diagonal involution $\tau$ formed from the antipodal actions on the lens space factors and complex conjugation on the circle factor.
    This is a free, orientation-reversing involution, so the quotient $X \coloneq \widetilde{X}/\tau$ is a non-orientable manifold.
    
    Now consider the cohomology class
    \[
        \widetilde{\alpha} \coloneq u\times\sigma\in H^4(\widetilde{X};\ZZ),
    \]
    where $\sigma\in H^1(S^1;\ZZ)$ denotes a generator.
    Using the Cartan formula, the product formula for Bocksteins, and the K\"unneth formula, one easily checks that
    \[
        0\neq  \St^5_3(\widetilde{\alpha})= \St^5_3(u\times \sigma) =  \St^5_3(u)\times\sigma \in H^9(\widetilde{X};\ZZ),
    \]
    hence $\widetilde{\alpha}$ is non-realisable.
    
    Next we observe that $\tau^*(\widetilde{\alpha})=-\widetilde{\alpha}$, that is, $\widetilde{\alpha}$ is anti-invariant under the involution $\tau$.
    Firstly, note that complex conjugation sends $\sigma$ to $-\sigma$, so it suffices to check that the diagonal involution on $L_1\times L_2$ induced by the antipodal map on each lens space factor sends $u=\beta(v_1\times v_2)$ to itself.
    By additivity of the Bockstein, it suffices to check that the antipodal map on each $L_i$ sends $v_i\in H^1(L_i;\ZZ_3)$ to itself.
    To see this, note that this is true in a lens space of the form $L=S^{4\ell +1}/\ZZ_3$ with $\ell \gg 0$, as follows from a simple calculation using the ring structure $H^*(L;\ZZ_3)\cong \ZZ_3[v,w]/(w^{2\ell+1})$ where $w=\beta_3(v)=\rho_3\circ\beta(v)$.
    Then use the equivariant embedding $L_i\hookrightarrow L$ which induces an isomorphism $H^1(L;\ZZ_3)\xrightarrow{\cong} H^1(L_i;\ZZ_3)$.
    
    Finally, let $\rho \colon \widetilde{X}\to X$ be the (orientation) double cover, with induced map
    \[
        \rho^* \colon H^*(X;\Or_X)\to H^*(\widetilde{X};\ZZ)
    \]
    and associated transfer
    \[
        \rho^! \colon H^*(\widetilde{X};\ZZ)\to H^*(X;\Or_X).
    \]
    Using the transfer formula (see e.g. \cite[III.9.5\,(iii)]{Brown82}) we have 
    \[
        \rho^*\rho^!(\widetilde{\alpha}) = \widetilde{\alpha} - \tau^*(\widetilde{\alpha}) = 2\widetilde{\alpha} = -\widetilde{\alpha},
    \]
    since $3\widetilde{\alpha}=0$. Finally, set $\alpha\coloneq\rho^!(-\widetilde{\alpha})\in H^4(X;\Or_X)$.
    This class is non-realisable by Lemma~\ref{lem:liftrep}, since $\widetilde{\alpha}=\rho^*(\alpha)$ is non-realisable.
    The Poincar\'e dual of $\alpha$ in $H_7(X;\ZZ)$ is the non-realisable integral homology class we seek.
    
    Using the results of Gitler \cite{Gitler63} which relate twisted cohomology operations with equivariant cohomology operations in the associated covering space, we can observe that in this example we have
    \[
        0\neq \St^5_{3, \tw} (\alpha)\in H^9(X;\Or_X).
    \]
\end{example}

\begin{remark}
    In the published version~\cite{BohrHankeKotschick02} of~\cite{BohrHankeKotschick00}, Bohr--Hanke--Kotschick give a different example of a non-realisable $7$-dimensional homology class in a $10$-manifold, namely the Poincar\'e dual of a generator $u\in H^3(\operatorname{Sp}(2);\ZZ)$.
    In both examples, we were unable to find an orientation-reversing free involution of the $10$-manifold for which the class $u$ is anti-invariant, thus preventing us from using this technique to find an example of minimal dimension.
    Furthermore the operation used to detect non-realisability in the $\operatorname{Sp}(2)$ example, namely $\rho_3(\placeholder)\cup P^1_3\circ\rho_3(\placeholder)$, is non-stable, and vanishes on $u\times \sigma\in H^4(\operatorname{Sp}(2)\times S^1;\ZZ)$.
    We suspect this class is realisable. 
\end{remark}

\begin{example}\label{ex:dim_10_degree_3}
    The following example gives a non-realisable $7$-dimensional integral homology class in a non-orientable $10$-manifold.
    Let $Y \coloneq L_1\times L_2$ be the product of lens spaces from Example \ref{ex:maptoruslens}, and let $P \coloneq \RR P^{10}$ be the non-orientable real projective space of dimension $10$.
    The connected sum
    \[
        X \coloneq Y\# P
    \]
    is non-orientable.
    Denote by $Y'$ and $P'$ the manifolds $Y$ and $P$ with an open disc removed, and denote by $\Or_Y'$ and $\Or_P'$ the restrictions of $\Or_Y$ and $\Or_P$ to $Y'$ and $P'$, respectively.
    Using the exact sequences in cohomology associated to the cofibre sequences $S^9\to Y'\to Y$ and $S^9\to P'\to P$, we see that the restrictions induce isomorphisms $H^i(Y;\Or_Y)\cong H^i(Y';\Or_Y')$ and $H^i(P;\Or_P)\cong H^i(P';\Or_P')$, for $i\le8$.
    Observe that the restrictions of $\Or_X$ to $Y'$ and $P'$ respectively agree with $\Or_Y'$ and $\Or_P'$.
    Thus the Mayer--Vietoris sequence for the union $X=Y'\cup_{S^9} P'$ gives isomorphisms
    \[
        H^i(X;\Or_X)\cong H^i(Y';\Or_Y')\oplus H^i(P';\Or_P'), \qquad \text{for $i\le8$}. 
    \]
    Putting things together, we have for each $i\le8$ a diagram
    \[
    \begin{tikzcd}
       H^i(X;\Or_X) \arrow[r,"\cong"] & H^i(Y';\Or_Y')\oplus H^i(P';\Or_P') & \arrow[l,"\cong" swap] H^i(Y;\Or_Y)\oplus H^i(P;\Or_P)
    \end{tikzcd}
    \]
    where the arrows are inclusion-induced isomorphisms.
    Since $Y$ is orientable, we have that  $H^i(Y;\Or_Y)=H^i(Y;\ZZ)$.
    Consider the element $U\in H^3(X;\Or_X)$ corresponding to
    \[
        (u,0)\in H^3(Y;\ZZ)\oplus H^3(P;\Or_P)
    \]
    under the above isomorphisms, where $u=\beta(v_1\times v_2)$ is the element considered in Example~\ref{ex:maptoruslens}.
    By naturality of twisted cohomology operations, the element $\St^5_{3, \tw} (U)\in H^8(X;\Or_X)$ corresponds to the nonzero element 
    \[
        \St^5_{3, \tw} (u,0)=(\St^5_3(u),0)\in H^8(Y;\Or_Y)\oplus H^8(P;\Or_P),
    \]
    and is therefore nonzero.
    (Here we have used additivity of twisted cohomology operations on a disjoint union, and the fact that they agree with the corresponding untwisted operations for trivial coefficient systems.)
    Hence by Theorem \ref{thm:twistedcube}\,(ii), the class $U$ and its Poincar\'e dual in $H_7(X;\ZZ)$ are non-realisable.
\end{example}
\begin{remark}
    Zhenhua Liu \cite{LiuMO} asks what is the smallest pair of natural numbers $(n,k)$ such that there exists a non-realisable integral homology class of dimension $k$ in a non-orientable manifold of dimension $n+k$. Using the results of this paper we can say that $(n,k)=(3,6)$ or $(3,7)$. Thom claims \cite[Footnote, p.~56]{Thom54} that $(3,6)$ cannot occur in the oriented (untwisted) case. It seems unlikely that any proof of this claim would easily generalise to the twisted case.
\end{remark}

Due to our inability to transfer Thom's claim \cite[Footnote, p.~56]{Thom54} to the parametrised setup, we cannot validate whether Example~\ref{ex:dim_10_degree_3} is of minimal dimension.
Even though we expect our example to be minimal, the question remains:
\begin{question}
     Does there exist a non-realisable integral homology class of dimension $3$ in a non-orientable manifold of dimension $9$?
\end{question}

Moreover, Zhenhua Liu \cite{LiuMO} also asks, in the spirit of Thom's result~\cite[Theorem II.29]{Thom54}, whether a multiple of some integer homology class in a non-orientable manifold $X$ can be represented by an orientable submanifold of $X$.
Even though the present article provides the necessary foundations to tackle this question, it does not answer it.
This leads us to formulate the following more general question for arbitrary integer coefficient systems.

\begin{question}\label{ques:multiple_homology}
    For any homology class $z \in H_k(X; \AA)$ of a manifold $X$ with local integer coefficients $\AA$, does there exist an integer $N \neq 0$ such that the class $Nz$ is realisable by a submanifold?
\end{question}

A generalisation to arbitrary local coefficient systems is desirable but expected to be difficult.
Primarily, it is not clear to us what \emph{realisability} for coefficient systems with structure groups other than $G = \ZZ_2$ could mean since, in such a situation, there is no obvious twisted fundamental class to choose from. 

\section*{Acknowledgements}

We thank Oliver Wang for his suggestion to consider the connected sum with $\RR P^{10}$ to produce Example~\ref{ex:dim_10_degree_3}.
We thank Thomas Rot for comments on an earlier draft, and for valuable discussions with the first two authors in Amsterdam. The first author thanks Diarmuid Crowley, Irakli Patchkoria and Andr\'as Sz\H{u}cs for helpful remarks.

\section*{Generative AI disclosure}

The second author used Microsoft Copilot (GPT-5) to discuss the details of Lemma~\ref{lemma:prop_loc_sys}, which inspired the current form of the proof.
Otherwise, all mathematical results, proofs and conclusions were developed and verified by the authors of this article, who take full responsibility for its content.

\printbibliography

\end{document}